\numberwithin{equation}{section}
\theoremstyle{plain}
\newtheorem{theorem}{Theorem}[section]
\newtheorem{corollary}[theorem]{Corollary}
\newtheorem{proposition}[theorem]{Proposition}
\newtheorem{lemma}[theorem]{Lemma}
\theoremstyle{remark}
\newtheorem{remark}[theorem]{Remark}
\newtheorem{example}[theorem]{Example}
\newtheorem*{ack}{Acknowledgement}
\theoremstyle{definition}
\newtheorem{definition}[theorem]{Definition}
\newcommand{\R}{\mathbb{R}}
\newcommand{\N}{\mathbb{N}}
\renewcommand{\P}{\mathbb{P}}
\newcommand{\cB}{\mathcal{B}}
\newcommand{\cM}{\mathcal{M}}
\newcommand{\cH}{\mathcal{H}}
\newcommand{\cN}{\mathcal{N}}
\newcommand{\cF}{\mathcal{F}}
\newcommand{\cG}{\mathcal{G}}
\newcommand{\cC}{\mathcal{C}}
\newcommand{\cW}{\mathcal{W}}
\newcommand{\cP}{\mathcal{P}}
\newcommand{\cA}{\mathcal{A}}
\renewcommand{\emptyset}{\varnothing}
\renewcommand{\epsilon}{\varepsilon}
\renewcommand{\rho}{\varrho}
\renewcommand{\phi}{\varphi}
\renewcommand{\i}{\mathtt{i}}
\begin{document}

\title{Random walks associated to beta-shifts}

\author{Bing Li}
\address{School of Mathematics \\
         South China University of Technology \\
         Guangzhou, 510641 \\
         P.R.\ China}
\email{scbingli@scut.edu.cn}

\author{Yao-Qiang Li}
\address{School of Mathematics \\
         South China University of Technology \\
         Guangzhou, 510641 \\
         P.R.\ China}
\email{scutyaoqiangli@qq.com}

\address{Institut de Math\'ematiques de Jussieu - Paris Rive Gauche \\
         Sorbonne Universit\'e - Campus Pierre et Marie Curie\\
         Paris, 75005 \\
         France}
\email{yaoqiang.li@imj-prg.fr}

\author{Tuomas Sahlsten}
\address{School of Mathematics \\
         University of Manchester \\
         Manchester, M13 9PL \\
         UK}
\email{tuomas.sahlsten@manchester.ac.uk}

\subjclass[2010]{Primary 28A12; Secondary 28A75, 28A80.}
\keywords{$\beta$-expansion, Bernoulli-type measure, digit frequency, Hausdorff dimension}
\date{\today}

\begin{abstract}
We study the dynamics of a simple random walk on subshifts defined by the beta transformation and apply it to find concrete formulae for the Hausdorff dimension of digit frequency sets for $\beta>1$ that solves $\beta^{m+1}-\beta^m-1=0$ generalising the work of Fan and Zhu. We also give examples of $\beta$ where this approach fails.
\end{abstract}

\maketitle

\section{Introduction}

Let $\Sigma = \{0,1\}^\N$ be the full shift and let $\Sigma^*$ be the set all finite words. Then any closed shift invariant subset of $\Sigma$ is called a \textit{subshift}. For any subshift of $\Sigma$ we can always write them as a set $\Sigma_\cW$ for some subset $\cW \subset \Sigma^*$ by removing all the sequences from $\Sigma$ containing substrings from $\cW$. The set $\cW$ is called the collection of all forbidden words. If $\cW$ is finite, then $\Sigma_\cW$ is called a subshift of finite type.

The main example in this paper we consider is the subshift $\Sigma_\beta \subset \Sigma$ defined by the possible \textit{$\beta$-expansions} $w_1w_2\dots$ to
$$x = \sum_{j = 1}^\infty w_j \beta^{-j}$$
of real numbers $x$, for $\beta > 1$, where the digits $w_j \in \{0,1\}$ are obtained by the natural filtration of $[0,1]$ defined by the \textit{$\beta$-transformation} $T_\beta(x) = \beta x \mod 1$ on $[0,1]$. For example in the case $\beta$ is the Golden ratio, then $\Sigma_\beta = \Sigma_{\{11\}}$ with forbidden word $11$. These expansions were introduced by R\'enyi \cite{Ren57} in 1957 and they have since been of wide interest throughout metric number theory and fractal geometry, and in analog-to-digital signal conversions in the study beta-encoders \cite{Ward}.

The algebraic properties of the number $\beta$ link deeply to the dynamical properties of the subshift $\Sigma_\beta$, for example, a classical result of Parry \cite{Pa60} says is that $\Sigma_\beta$ is a subshift of finite type if and only if $\beta$ is a simple number, that is, $1$ has a finite $\beta$-expansion. In this paper we will study further dynamical characterisations of $\Sigma_\beta$ from the point of view of random walks on the finite words $\Sigma_\beta^*$ associated to $\Sigma_\beta$.

Let $\cW$ be any set of forbidden words of the full shift $\Sigma$. Given $0 < p < 1$, there is a natural biased random walk $X_n = \omega_1\omega_2\dots \omega_n$ on $\Sigma^*$ for random variables $\omega_1,\omega_2,\dots \in \{0,1\}$ defined as follows. If $X_{n-1} = w \in \Sigma_\cW^{n-1}$, where $w1 \notin \cW$, then the probability of $\omega_n = 0$ is $p$ and $\omega_n = 1$ by $1-p$ respectively. If $w1 \in \cW$, then the probability of $\omega_n = 0$ is $1$. The random walk $(X_n)$ defines a probability distribution $\mu_p$ supported on the subshift $\Sigma_\cW$ by setting
$$\mu_p[w] := \P(X_{|w|} = w)$$
for all $w \in \Sigma^*$ and cylinder $[w]$. Then $\mu_p[0] = p$, $\mu_p[1] = 1-p$, and if $w1 \notin \cW$, we have $\mu_p[w0] = p\mu_p[w]$ and $\mu_p[w1] = (1-p)\mu_p[w]$. If $w1 \in \cW$, we have $\mu_p[w0] = \mu_p[w]$. Then $\mu_p$ defines a natural probability measure $\nu_p = \pi_\beta \mu_p$ on $[0,1]$ under the natural projection
$$\pi(w) = \sum_{j=1}^\infty w_j \beta^{-j}.$$

In the case of $\beta$-shift $\Sigma_\beta$, we notice that the measure $\mu_p$ could be considered some what natural construction of a Bernoulli type measure for $\Sigma_\beta$, but in general $\mu_p$ does fail to be, for example, $T_\beta$ invariant under the $\beta$ transformation $T_\beta$. However, what we see that having a type of quasi-Bernoulli is closely related to the algebraic properties of $\beta$:

\begin{theorem}\label{thm:dynamics}
Let $\beta > 1$ and $\Sigma_\beta$ the associated subshift. Then the measure $\mu_p$ is quasi-shift-invariant, that is, the shift action preserve the $\mu_p$ null sets. Moreover, the following are equivalent
\begin{itemize}
\item[(1)] $\beta$ is simple number, that is, the $\beta$-expansion of $1$ is finite;
\item[(2)] $\mu_p$ is quasi-Bernoulli, that is, there is a constant $C > 0$ such that
$$C^{-1}\mu_p[w]\mu_p[v] \leq \mu_p[wv] \leq C\mu_p[w]\mu_p[v]$$
for all admissible $w,v \in \Sigma^*$ with $wv$ admissible.
\item[(3)] $\mu_p$ is strongly quasi-invariant with respect to the shift.
\end{itemize}
When the $\beta$ is simple, by the strong quasi-invariance, there exists a unique ergodic probability measure on $\Sigma_\beta$ equivalent to $\mu_p$.
\end{theorem}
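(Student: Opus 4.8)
The plan is to build everything on an explicit formula for $\mu_p$ on cylinders. Write $d^\ast=d^\ast_1d^\ast_2\cdots$ for the quasi-greedy $\beta$-expansion of $1$, so that a finite word $w$ is admissible exactly when no suffix of $w$ exceeds $d^\ast$ lexicographically, and appending the letter $1$ to an admissible $w$ destroys admissibility precisely when some suffix of $w$ equals a prefix $d^\ast_1\cdots d^\ast_\ell$ of $d^\ast$ with $d^\ast_{\ell+1}=0$. Unwinding the definition of the random walk one gets, for admissible $w$,
\[\mu_p[w]=p^{a(w)}(1-p)^{b(w)},\qquad b(w)=\#\{j:w_j=1\},\quad a(w)=\#\{j:w_j=0,\ w_1\cdots w_{j-1}1\text{ admissible}\}.\]
Since $b$ is additive and every suffix of an admissible word is admissible, $a(wv)\le a(w)+a(v)$, so $\mu_p[wv]\ge\mu_p[w]\mu_p[v]$ always; thus the left estimate in (2) holds with constant $1$, and (2) reduces to the single statement that $a(w)+a(v)-a(wv)$ is bounded over admissible $w,v$ with $wv$ admissible. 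Because the letter $0$ fixes the initial state of the walk, $\mu_p[0w]=p\,\mu_p[w]$, hence $\sigma_\ast\mu_p=p\,\mu_p+(1-p)\nu_1$ where $\nu_1$ is the law of the walk conditioned on its first letter being $1$; together with a coupling argument giving $\nu_1\ll\mu_p$ this yields $\mu_p\sim\sigma_\ast\mu_p$, i.e.\ the quasi-shift-invariance claimed for every $\beta>1$. Taking $w$ of length one already shows (2)$\Rightarrow$(3).

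Assume (1). By Parry \cite{Pa60} the sequence $d^\ast$ is then purely periodic, of period $m$ say, and $\Sigma_\beta$ is a subshift of finite type. The state of the walk after reading $w$ -- the length of the longest suffix of $w$ which is a prefix of $d^\ast$, reduced modulo $m$ -- then takes finitely many values, is a function of the last $m$ letters of $w$, and determines the one-step transition probabilities of the walk; so $\mu_p$ is, via the canonical conjugacy with the finite irreducible Parry automaton of $\Sigma_\beta$, a Markov measure. Writing $\mu_p[wv]/(\mu_p[w]\mu_p[v])$ as the ratio of the probabilities of generating $v$ from the state reached after $w$ and from the initial state, and noting that on a finite irreducible automaton this ratio is controlled (the block $0^m$ resets the walk to its initial state), one obtains (2); hence (3), and hence the first assertion. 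One may also argue directly: every extra forced $0$ created by prepending $w$ to $v$ corresponds to a suffix of $w$ that is a prefix of $d^\ast$, and periodicity of $d^\ast$ bounds the number of such positions through the number of residues modulo $m$.

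For the converse we establish $\neg$(1)$\Rightarrow\neg$(2) and $\neg$(3). If $\beta$ is not simple then $d^\ast$ is not purely periodic, equivalently $\sigma^Nd^\ast\prec d^\ast$ for every $N\ge1$, and $\Sigma_\beta$ is not of finite type. Put $w_n=d^\ast_1\cdots d^\ast_N$ and $v_n=d^\ast_{N+1}\cdots d^\ast_{N+K}$, so $w_nv_n=d^\ast_1\cdots d^\ast_{N+K}$ is admissible. Since a prefix of $d^\ast$ followed by $1$ is inadmissible whenever the next digit of $d^\ast$ is $0$, we have $a(d^\ast_1\cdots d^\ast_M)=0$ for all $M$, hence
\[\frac{\mu_p[w_nv_n]}{\mu_p[w_n]\,\mu_p[v_n]}=p^{-a(d^\ast_{N+1}\cdots d^\ast_{N+K})}.\]
It thus suffices to find, for each $n$, indices $N,K$ with $a(d^\ast_{N+1}\cdots d^\ast_{N+K})\ge n$: one takes $N$ so that $\sigma^Nd^\ast$ leaves the prefix of $d^\ast$ at an early position and thereafter contains at least $n$ further $0$'s each of which may still be followed by $1$ admissibly -- such $N$ exist because $d^\ast$ is aperiodic and contains infinitely many $0$'s (the latter as $\beta<2$). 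The same words, with $w$ a suitable tail of $d^\ast$, make $\mu_p[1w]/\mu_p[w]$ arbitrarily large, so $\mathrm{d}\sigma_\ast\mu_p/\mathrm{d}\mu_p$ is unbounded and (3) fails. I expect this to be the main obstacle: verifying, for an arbitrary aperiodic quasi-greedy $d^\ast$ -- including the non-Parry case, in which the Parry automaton has infinitely many states -- that shifts $\sigma^Nd^\ast$ with the stated property exist requires carrying the precise combinatorics of the lexicographic admissibility condition through.

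Finally, suppose $\beta$ is simple, so (3) holds: the densities $\mathrm{d}\sigma_\ast^{\,n}\mu_p/\mathrm{d}\mu_p$ are bounded above and below uniformly in $n$. Then every weak-$\ast$ cluster point $\nu$ of the Ces\`aro averages $\frac1N\sum_{n<N}\sigma_\ast^{\,n}\mu_p$ is $\sigma$-invariant and satisfies $c\,\mu_p\le\nu\le C\,\mu_p$ for some fixed $0<c\le C$, so $\nu\sim\mu_p$; concretely $\nu$ is the image in $\Sigma_\beta$ of the stationary Markov measure on the Parry automaton, which is ergodic because that automaton is irreducible. Uniqueness: if $\nu'$ is any $\sigma$-invariant probability with $\nu'\sim\mu_p$, then $\nu'\ll\nu$ and $\mathrm{d}\nu'/\mathrm{d}\nu$ is $\sigma$-invariant, hence $\nu$-a.e.\ constant by ergodicity of $\nu$, so $\nu'=\nu$.
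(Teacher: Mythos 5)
Your skeleton is essentially the paper's: the product formula $\mu_p[w]=p^{N_0(w)}(1-p)^{N_1(w)}$, the free lower bound $\mu_p[wv]\ge\mu_p[w]\mu_p[v]$ from subadditivity of $N_0$, a bounded defect $N_0(w)+N_0(v)-N_0(wv)\le M$ when $\beta$ is simple (your automaton/reset argument is a repackaging of Lemma \ref{parameter}(3), resting on the fact that from any state one returns to a full word within $M$ steps, i.e.\ Lemma \ref{tau} and Proposition \ref{tail-non-full}), and prefixes of $\epsilon^*$ as counterexamples otherwise. But the converse direction is incomplete at exactly the point you flag as ``the main obstacle'', and this is a genuine gap, not a technicality: you need, for non-simple $\beta$, blocks $\epsilon^*_{N+1}\cdots\epsilon^*_{N+K}$ with $N_0$ arbitrarily large. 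The missing ingredient is the paper's Lemma \ref{infinitefullwords}: $\sigma\epsilon^*=\epsilon_2\epsilon_3\cdots=\epsilon(T_\beta 1,\beta)$ is itself an admissible \emph{sequence}, and every admissible sequence has infinitely many full prefixes (iterate: the first index where it drops strictly below $\epsilon^*$ gives a full prefix by Lemma \ref{admfull}, then repeat on the shifted tail), and each full prefix may be followed by $1$ admissibly; hence $N_0(\epsilon_2\cdots\epsilon_n)\to\infty$ and $N=1$ already works, with no case analysis on aperiodicity or on the non-Parry automaton. Without this your $\neg(1)\Rightarrow\neg(2)$ and $\neg(1)\Rightarrow\neg(3)$ are not proofs.

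Two further steps as written do not hold up. First, the quasi-invariance for every $\beta$: your observation $\sigma_*\mu_p\ge p\,\mu_p$ gives $\mu_p\ll\sigma_*\mu_p$ cleanly (nicer than the paper's covering argument), but the hard direction $\sigma_*\mu_p\ll\mu_p$ is precisely the ``coupling argument giving $\nu_1\ll\mu_p$'' that you assert and never give; note that when $\epsilon(1,\beta)$ is infinite the density is unbounded (that is your own $\neg(3)$ example), so no uniform comparison exists and the argument must be local. It can be completed with the same full-prefix lemma -- partition $[1]$ into the countably many minimal full cylinders $[1u]$ and use $\mu_p[1uv]=\mu_p[1u]\mu_p[v]\le(\mu_p[1u]/\mu_p[u])\,\mu_p[uv]$ cylinder by cylinder -- or as in the paper, by excising $[\epsilon^*_2\cdots\epsilon^*_m]$ and using that $\mu_p$ has no atoms; but some such idea is needed and is absent. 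Second, ``taking $w$ of length one already shows (2)$\Rightarrow$(3)'' is not an argument: strong quasi-invariance demands one constant for \emph{all} $k$, and iterating the one-step comparison yields $C^k$. Quasi-Bernoulli does give the upper bound $\sigma^k_*\mu_p[w]\le C\mu_p[w]$ uniformly in $k$ (sum over all admissible length-$k$ prefixes), but the uniform lower bound requires the reset block you mention elsewhere: restrict the sum to prefixes $u'0^M$, which are full by Proposition \ref{tail-non-full}, so $\sigma^k_*\mu_p[w]\ge\sum_{u'}\mu_p[u'0^M]\mu_p[w]\ge p^M\mu_p[w]$ -- i.e.\ prove $(1)\Rightarrow(3)$ directly, as the paper does. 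Your final paragraph (weak-$*$ cluster points of the Ces\`aro averages, ergodicity via the irreducible Parry automaton, uniqueness via an invariant Radon--Nikodym derivative) is a legitimate alternative to the paper's route through the Dunford--Miller ergodic lemma and its $0$--$1$ law, provided the Markov identification is actually verified.
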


This could be considered as an analogue of Parry's characterisation \cite{Pa60} of subshift of finite type with $\beta$ being simple, and indeed we will use this as an ingredient of the proof.

This work was initiated from the question to establish concrete formulae for the Hausdorff dimensions of the sets of real numbers with specified digit frequencies associated to $\beta$-expansions, and for this purpose Theorem \ref{thm:dynamics} becomes useful. Here we define the level sets
$$F_p:=\Big\{x\in [0, 1): \lim_{n\to\infty}\frac{\sharp\{1\le k\le n: \varepsilon_k(x,\beta)=0\}}{n}=p\Big\},$$
$$\underline{F}_p:=\Big\{x\in [0, 1): \varliminf_{n\to\infty}\frac{\sharp\{1\le k\le n: \varepsilon_k(x,\beta)=0\}}{n}=p\Big\},$$
$$\overline{F}_p:=\Big\{x\in [0, 1): \varlimsup_{n\to\infty}\frac{\sharp\{1\le k\le n: \varepsilon_k(x,\beta)=0\}}{n}=p\Big\}$$
where $\epsilon_1(x,\beta)\epsilon_2(x,\beta)\cdots\epsilon_k(x,\beta)\cdots$ is the $\beta$-expansion of $x$. A well-known result associated to the digit frequencies is the result of Fan and Zhu \cite{FZ04}, who prove that
$$\dim_HF_p=\frac{p\log p-(2p-1)\log(2p-1)-(1-p)\log(1-p)}{\log\beta}$$
where $\beta=\frac{\sqrt{5}+1}{2}$ is the golden ratio and $\frac{1}{2}\le p\le1$.

We employ the random walks on $\Sigma_\beta^*$ above to extend the work \cite{FZ04} to more general numbers and obtain the following extension:

\begin{theorem}\label{thm:digit}
For $1<\beta<2$ such that $\epsilon(1,\beta)=10^m10^\infty$ with some $m\in\{0,1,2,3,\cdots\}$, the following exact formulas of the Hausdorff dimension of $F_p$, $\underline{F}_p$ and $\overline{F}_p$ hold:
\begin{itemize}
\item[(1)] If $0\le p<\frac{m+1}{m+2}$, then $F_p=\underline{F}_p=\overline{F}_p=\emptyset$ and $\dim_H F_p=\dim_H\underline{F}_p=\dim_H\overline{F}_p=0$.
\item[(2)] If $\frac{m+1}{m+2}\le p\le 1$, then $\dim_HF_p=\dim_H\underline{F}_p=\dim_H\overline{F}_p$
\begin{small}
$$=\frac{(mp-m+p)\log(mp-m+p)-(mp-m+2p-1)\log(mp-m+2p-1)-(1-p)\log(1-p)}{\log\beta}.$$
\end{small}
\end{itemize}
\end{theorem}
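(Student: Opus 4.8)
The plan is to move to the symbolic coding, identify $\Sigma_\beta$ with an explicit subshift of finite type, and then squeeze the three dimensions between a counting upper bound and a lower bound furnished by the random walk measures of Theorem~\ref{thm:dynamics}. First I would record the combinatorics of $\Sigma_\beta$: since $\epsilon(1,\beta)=10^m10^\infty$ is finite, $\beta$ is simple, and the quasi-greedy expansion of $1$ is $(10^{m+1})^\infty$; by Parry's admissibility criterion a word is admissible precisely when any two consecutive digits $1$ are separated by at least $m+1$ digits $0$, i.e.\ $\Sigma_\beta=\Sigma_\cW$ with $\cW=\{10^i1:0\le i\le m\}$. Consequently any block of $m+2$ consecutive digits of any $x\in[0,1)$ contains at most one $1$, so the frequency of $0$'s, hence its $\liminf$ and $\limsup$, is always $\ge\frac{m+1}{m+2}$; this already gives part~(1) (with the convention $\dim_H\emptyset=0$). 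For part~(2) write
\[
\phi(p):=\big((m+1)p-m\big)\log\big((m+1)p-m\big)-\big((m+2)p-m-1\big)\log\big((m+2)p-m-1\big)-(1-p)\log(1-p),
\]
with $0\log0:=0$, which is the numerator in the statement; it is continuous on $[\tfrac{m+1}{m+2},1]$ and vanishes at both endpoints, and the goal becomes $\dim_HF_p=\dim_H\underline F_p=\dim_H\overline F_p=\phi(p)/\log\beta$.

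For the upper bound I would use that a cylinder $[w]$ of generation $n$ has diameter at most $\beta^{-n}$, together with the elementary fact (stars and bars) that the number of admissible words of length $n$ with exactly $j$ digits $1$ is $\binom{n-(m+1)(j-1)}{j}$; by Stirling's formula the number of admissible length-$n$ words whose $0$-frequency lies in $[p-\eps,p+\eps]$ grows like $e^{n(\sup_{|q-p|\le\eps}\phi(q)+o(1))}$. If $x\in\underline F_p$ then for every $\eps>0$ there are infinitely many $n$ for which $\epsilon_1(x,\beta)\cdots\epsilon_n(x,\beta)$ has $0$-frequency in $[p-\eps,p+\eps]$, so covering $\underline F_p$ by the corresponding cylinders shows that its $s$-dimensional Hausdorff measure is $0$ as soon as $s>\sup_{|q-p|\le\eps}\phi(q)/\log\beta$; letting $\eps\to0$ and invoking the continuity of $\phi$ gives $\dim_H\underline F_p\le\phi(p)/\log\beta$, and the identical argument applies to $\overline F_p$, hence to $F_p\subseteq\underline F_p\cap\overline F_p$.

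For the lower bound I would exploit the random walk measures; write $\mu_t$ for the measure associated with bias $t\in(0,1)$ (the parameter called $p$ in the construction). The description of $\Sigma_\beta$ above shows that $\mu_t$ is a Markov measure on the finite state space recording the number of $0$'s elapsed since the last $1$, capped at $m+1$: from the ``free'' state a $0$ is emitted with probability $t$ and a $1$ with probability $1-t$, while every other transition is forced to $0$. Computing its stationary distribution gives frequency of $0$'s equal to $\tfrac{(m+1)-mt}{(m+2)-(m+1)t}$ and Kolmogorov--Sinai entropy $h(\mu_t)=\tfrac{-(t\log t+(1-t)\log(1-t))}{(m+2)-(m+1)t}$, and eliminating $t$ one verifies that, when $t=t(p)\in[0,1]$ is chosen so that the first quantity equals $p$, the second equals $\phi(p)$. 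Since $\beta$ is simple, Theorem~\ref{thm:dynamics} provides an ergodic shift-invariant $\tilde\mu_t$ equivalent to $\mu_t$ with $\tilde\mu_t[w]\asymp\mu_t[w]$ on cylinders; by the ergodic theorem applied to $\1_{[0]}$ and by Shannon--McMillan--Breiman, for $\mu_t$-a.e.\ $x$ the $0$-frequency equals $p$ and $-\tfrac1n\log\mu_t[\epsilon_1(x,\beta)\cdots\epsilon_n(x,\beta)]\to\phi(p)$. Comparing cylinders with intervals of comparable length (a positive density of prefixes end in a run of at least $m+1$ zeros, hence are ``full'', i.e.\ have length exactly $\beta^{-n}$) and applying the mass distribution principle, the set of such $x$ --- which is contained in $F_p$ --- has Hausdorff dimension at least $\phi(p)/\log\beta$; since this set also lies in $\underline F_p\cap\overline F_p$, all three lower bounds follow, and together with the upper bounds they yield part~(2).

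The step I expect to be the main obstacle is the passage from the symbolic entropy to Hausdorff dimension on $[0,1)$: controlling the geometry of the $\beta$-cylinders (through full cylinders, or equivalently through the boundedness of the density of the invariant measure equivalent to $\mu_t$) precisely enough to turn $-\tfrac1n\log\mu_t[\epsilon_1(x,\beta)\cdots\epsilon_n(x,\beta)]\to\phi(p)$ into a genuine lower bound for $\dim_HF_p$, and in parallel the bookkeeping --- routine but not short --- needed to check the identity $h(\mu_t)=\phi(p)$ under the reparametrisation $t\mapsto p$.
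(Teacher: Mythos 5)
Your outline is correct, and the computations that drive it check out: the count $\binom{n-(m+1)(j-1)}{j}$ of admissible words with $j$ ones, the stationary $0$-frequency $\tfrac{(m+1)-mt}{(m+2)-(m+1)t}$, and the identity $h=\phi(p)$ when $t=\tfrac{(m+2)p-(m+1)}{(m+1)p-m}$ (which is exactly the parameter $q$ used in the paper). But your route is genuinely different from the paper's. For the upper bound the paper does not count words: it computes $\nu_q(I_n(x))=q^{N_0(x,n)}(1-q)^{N_1(x,n)}$, relates $N_0$ and $N_1$ through $n\le N_0(x,n)+(m+2)N_1(x,n)\le n+m+1$ (Lemma \ref{n0estimate}), bounds the modified lower local dimension and applies Theorem \ref{dimestimate}(1); your covering/counting argument is a valid alternative and even treats the endpoints $p=\tfrac{m+1}{m+2}$ and $p=1$ uniformly, which the paper handles by a separate limiting argument. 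For the lower bound the paper avoids both the Markov identification and Shannon--McMillan--Breiman: because $\mu_q[w]$ is an explicit product, the pointwise exponent follows from the digit frequencies alone, the frequency $m_q[0,\tfrac1\beta)=p$ is obtained by the averaging identity of Lemma \ref{mpcylinder}, and the passage to Hausdorff dimension is done with the net-measure machinery of Section \ref{sec:locdim}; the Markov-chain structure you use appears in the paper only later, in the proof of Example \ref{positive}. Two steps in your sketch need more than assertion. First, $\mu_t$ itself is not shift-invariant, so the stationary chain you describe must be shown to coincide with (or at least be comparable on cylinders to) the ergodic invariant measure of Theorem \ref{mp} --- e.g.\ by checking $\theta_t[w]\asymp\mu_t[w]$ for the stationary Markov measure $\theta_t$ and invoking uniqueness --- before you may read off the a.e.\ frequency and the entropy. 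Second, the classical mass distribution principle for balls is not directly available: control of $\mu_t(I_n(x))$ does not control $\mu_t(B(x,r))$, since a neighbouring cylinder can carry much larger mass (Remark \ref{wrongexample} is exactly such an example); what is needed is the modified, cylinder-based version --- Theorem \ref{dimestimate}(2), equivalently the Bugeaud--Wang principle \cite{BuWa14} --- which applies here because $\beta\in A_0$ gives $|I_n(x)|\asymp\beta^{-n}$ (Proposition \ref{cylinderlength}). You flagged this last point yourself as the main obstacle; with these two repairs your argument goes through and yields the same formula.
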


For calculating the Hausdorff dimension of the level set $F_p$, there is a variation formula in \cite{PfSu05} says that we only need to calculate the measure-theoretic entropy of $T_\beta$ with respect to the invariant probability Borel measure with maximal entropy taking value $p$ on $[0,\frac{1}{\beta})$ (see also \cite[Proposition 4.2]{Li19}). The following two examples show that if we assume that $\beta$ has the form assumed in Theorem \ref{thm:digit}, then $m_p$, the $T_\beta$-ergodic invariant probability Borel measure we study in Section 4, is a measure with maximal entropy:

\begin{example}\label{positive}
Let $\beta\in(1,2)$ such that $\epsilon(1,\beta)=10^m10^\infty$ with some $m\in\{0,1,2,3,\cdots\}$ . Then for any $p\in(0,1)$, we have
\begin{small}
$$h_{m_p}(T_\beta)=\sup\Big\{h_\nu(T_\beta):\nu\text{ is a }T_\beta\text{-invariant }[0,1)\text{ and }\nu[0,\frac{1}{\beta})=m_p[0,\frac{1}{\beta})\Big\}.$$
\end{small}
\end{example}

However, if we do not assume that $\beta$ has the form assumed in Theorem \ref{thm:digit}, then there exists $\beta\in(1,2)$ such that $m_p$ will never be the measure with maximal entropy:

\begin{example}\label{counter}
Let $\beta\in(1,2)$ such that $\epsilon(1,\beta)=1110^\infty$. Then for any $p\in(0,1)$, we have
\begin{small}
$$h_{m_p}(T_\beta)<\sup\Big\{h_\nu(T_\beta):\nu\text{ is a }T_\beta\text{-invariant on }[0,1)\text{ and }\nu[0,\frac{1}{\beta})=m_p[0,\frac{1}{\beta})\Big\}.$$
\end{small}
\end{example}

See Section \ref{sec:examples} for proofs of these examples. As a future problem it would be interesting to see how the random walk we use could be used to characterise further arithmetic properties of $\beta$, and also if one can prove similar results for other $\beta$ transformations like the intermediate $T_{\beta,\alpha}(x) = \beta x + \alpha \mod 1$.

The article is organised as follows. In Section \ref{sec:prelim} we give some notations and preliminaries about the beta-shifts and their properties. In Section \ref{sec:digit} we define the digit frequency parameters and establish some key properties of them using the structure of the beta-shift. In Section \ref{sec:rw} we prove the dynamical properties of the random walk $X_n$ on $\Sigma_\beta^*$. In Sections \ref{sec:locdim} and \ref{sec:hd} we prove local dimension bounds for $\mu_p$ and Hausdorff dimension bounds for the digit frequency sets. Finally, in Section  \ref{sec:examples} we prove the Examples \ref{positive} and \ref{counter}.

\section{Notation and preliminaries}\label{sec:prelim}

Throughout this paper, we use $\N$ to denote the positive integer set $\{1,2,3,4,\cdots\}$ and $\N_{\ge0}$ to denote the non-negative integer set $\{0,1,2,3,\cdots\}$.

In this section, we assume $\beta>1$. We will give some basic notations and recall some necessary preliminary work.

Similar to \cite{DK02}, we consider the \textit{$\beta$-transformation} $T_\beta:[0,1]\rightarrow[0,1)$ given by
$$T_\beta(x):=\beta x-\lfloor\beta x \rfloor\quad\text{for } x\in[0,1]$$
where $\lfloor\beta x \rfloor$ denotes the integer part of $\beta x$. Let
$$\mathcal{A}_\beta:=\left\{\begin{array}{ll}
\{0,1,\cdots,\beta-1\} & \mbox{if } \beta\in\N \\
\{0,1,\cdots,\lfloor\beta\rfloor\} & \mbox{if } \beta\notin\N
\end{array}\right.$$
and for any $n\in\N, x\in[0,1]$, we define
$$\epsilon_n(x,\beta) := \lfloor \beta T^{n-1}_\beta (x)\rfloor\in\cA_\beta.$$
Then we can write
$$x = \sum_{n = 1}^\infty\frac{\epsilon_n(x,\beta)}{\beta^n}$$
and call the sequence $\epsilon(x,\beta):=\epsilon_1(x,\beta)\epsilon_2(x,\beta)\cdots\epsilon_n(x,\beta)\cdots$ the $\beta$-\textit{expansion} of $x$.

We use $\epsilon_1\epsilon_2\cdots\epsilon_n\cdots$ to denote $\epsilon(1,\beta)=\epsilon_1(1,\beta)\epsilon_2(1,\beta)\cdots\epsilon_n(1,\beta)\cdots$ for abbreviation in this paper. We say that $\epsilon(1,\beta)$ is infinite if there are infinitely many $n\in\N$ such that $\epsilon_n\neq 0$. Conversely, if there exists $M\in \mathbb{N}$ such that $j>M$ implies $\epsilon_j=0$, we say that $\epsilon(1,\beta)$ is finite and call $\beta$ a simple beta-number. If additionally $\epsilon_M\neq0$, we say that $\epsilon(1,\beta)$ is finite with length $M$.

The \textit{modified} $\beta$-expansion of $1$ is very useful for showing the admissibility of a sequence (see for example Lemma \ref{charADM}). It is defined by
$$\epsilon^*(1,\beta):=\left\{\begin{array}{ll}
\epsilon(1,\beta) & \mbox{if } \epsilon(1,\beta) \mbox{ is infinite};\\
(\epsilon_1\cdots\epsilon_{M-1}(\epsilon_M-1))^\infty & \mbox{if } \epsilon(1,\beta) \mbox{ is finite with length } M.
\end{array}\right.$$
No matter whether $\epsilon(1,\beta)$ is finite or not, we denote $\epsilon^*(1,\beta)=\epsilon_1^*(1,\beta)\epsilon_2^*(1,\beta)\cdots\epsilon_n^*(1,\beta)\cdots$ by $\epsilon_1^*\epsilon_2^*\cdots\epsilon_n^*\cdots$ for abbreviation.

For a finite word $w$, we use $|w|$  to denote its \textit{length}. On the other hand, we write $w|_k:=w_1w_2\cdots w_k$ to be the prefix of $w$ with length $k$ for $w\in \mathcal{A}_\beta^\N$ or $w\in \mathcal{A}_\beta^n$ where $n\ge k$.

Let $\sigma:\mathcal{A}_\beta^\N\rightarrow\mathcal{A}_\beta^\N$ be the \textit{shift}
$$\sigma(w_1w_2\cdots) = w_2w_3\cdots \quad \text{for } w \in \mathcal{A}_\beta^\N.$$
We define the usual metric $d$ on $\mathcal{A}_\beta^\N$ by
$$d(w,v):=\beta^{-\inf\{k\ge0: w_{k+1}\neq v_{k+1}\}}\quad\text{for any }w,v\in\mathcal{A}_\beta^\N.$$
Then $\sigma$ is continuous.

\begin{definition}[Admissibility] A sequence $w \in \mathcal{A}_\beta^\N$ is called \textit{admissible} if there exists $x \in [0,1)$ such that $\epsilon_i(x,\beta) = w_i$ for all $i \in \N$. We denote the set of all admissible sequences by $\Sigma_\beta$. A word $w \in \mathcal{A}_\beta^n$ is called \textit{admissible} if there exists $x \in [0,1)$ such that $\epsilon_i(x,\beta) = w_i$ for $i = 1,\cdots,n$. We denote the set of all admissible words with length $n$ by $\Sigma_\beta^n$ and write
$$\Sigma_\beta^*:=\bigcup_{n=1}^\infty\Sigma_\beta^n.$$
\end{definition}

\begin{remark}
It is not difficult to check $w|_n\in\Sigma_\beta^n$ and $w_{n+1}w_{n+2}\cdots\in\Sigma_\beta$ for any $n\in\mathbb{N}$ and $w\in\Sigma_\beta$ by definition.
\end{remark}

\begin{lemma}[Parry's criterion \cite{Pa60}]\label{charADM}
Let $w \in \mathcal{A}_\beta^\N$.
Then $w$ is admissible (that is, $w \in \Sigma_\beta$) if and only if
$$\sigma^k (w) \prec \epsilon^*(1,\beta) \quad \text{for all } k \ge 0$$
where $\prec$ means the lexicographic order smaller in $\cA_\beta^\N$.
\end{lemma}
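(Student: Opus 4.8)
The statement is the classical criterion of Parry, and the plan is to prove it by unwinding the greedy algorithm defining $\epsilon(\cdot,\beta)$ and tracking the natural projection $\pi(w)=\sum_{j\ge1}w_j\beta^{-j}$. (If $\beta\in\N$ then $\epsilon^*(1,\beta)=(\beta-1)^\infty$ and every point of $[0,1)$ has a unique expansion avoiding the tail $(\beta-1)^\infty$, so the statement is elementary; assume from now on $\beta\notin\N$.) I would first record the two identities coming straight from the definition: for $x\in[0,1]$ and $n\ge1$,
$$T_\beta^n(x)=\beta^n x-\sum_{j=1}^n\epsilon_j(x,\beta)\beta^{n-j},$$
whence $T_\beta^n(x)=\pi(\sigma^n\epsilon(x,\beta))$ and, letting $n\to\infty$ and using $T_\beta^n(x)\in[0,1)$, $x=\pi(\epsilon(x,\beta))$; by uniqueness of the greedy digits this gives $\sigma^n\epsilon(x,\beta)=\epsilon(T_\beta^n x,\beta)$. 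Next I would check that $x\mapsto\epsilon(x,\beta)$ is strictly increasing from $([0,1],<)$ into $(\cA_\beta^\N,\prec)$: at the first coordinate $n$ where two expansions differ the preceding digits agree, so the iterates $T_\beta^{n-1}(\cdot)$ are strictly ordered and hence so are the $n$-th digits, and injectivity is immediate from $x=\pi(\epsilon(x,\beta))$. Finally I would note $\pi(\epsilon^*(1,\beta))=1$ — clear when $\epsilon(1,\beta)$ is infinite, and a one-line geometric-series computation using $\sum_{j<M}\epsilon_j\beta^{-j}+(\epsilon_M-1)\beta^{-M}=1-\beta^{-M}$ when $\epsilon(1,\beta)=\epsilon_1\cdots\epsilon_{M-1}\epsilon_M0^\infty$ — and that $\epsilon^*(1,\beta)$ has infinitely many nonzero digits (its first digit is $\lfloor\beta\rfloor\ge1$).

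For necessity, if $w=\epsilon(x,\beta)$ with $x\in[0,1)$ then $\sigma^k w=\epsilon(T_\beta^k x,\beta)$ with $T_\beta^k x\in[0,1)$, so it suffices to prove $\epsilon(y,\beta)\prec\epsilon^*(1,\beta)$ for every $y\in[0,1)$. When $\epsilon(1,\beta)$ is infinite this is monotonicity plus injectivity against $\epsilon(1,\beta)=\epsilon^*(1,\beta)$. When $\epsilon(1,\beta)$ is finite of length $M$, I would argue by contradiction: if $\epsilon(y,\beta)\succeq\epsilon^*(1,\beta)$ then, since also $\epsilon(y,\beta)\prec\epsilon(1,\beta)$, the two constraints pin the first $M$ digits of $\epsilon(y,\beta)$ to the repeating block of $\epsilon^*(1,\beta)$; passing to $T_\beta^M y\in[0,1)$ and iterating forces $\epsilon(y,\beta)=\epsilon^*(1,\beta)$, hence $y=\pi(\epsilon^*(1,\beta))=1$, contradicting $y<1$.

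For sufficiency, assume $\sigma^k w\prec\epsilon^*(1,\beta)$ for all $k\ge0$. The crux is the bound
$$\pi(v)\le1\qquad\text{whenever }\ \sigma^k v\preceq\epsilon^*(1,\beta)\ \text{ for all }k\ge0,$$
which I would prove by induction on $N$ for the partial sums $S_N(v):=\sum_{j=1}^N v_j\beta^{-j}$: with $n_1$ the first index where $v_{n_1}<\epsilon^*_{n_1}$, the estimate $\sum_{j<n_1}\epsilon^*_j\beta^{-j}\le1-\epsilon^*_{n_1}\beta^{-n_1}$ (from $\pi(\epsilon^*(1,\beta))=1$) together with the induction hypothesis applied to $\sigma^{n_1}v$ gives $S_N(v)\le1-\beta^{-n_1}\bigl(1-S_{N-n_1}(\sigma^{n_1}v)\bigr)\le1$. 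Running the same computation with the strict inequalities $\sigma^k w\prec\epsilon^*(1,\beta)$ and using $\pi(\sigma^{n_1}\epsilon^*(1,\beta))>0$ upgrades the conclusion to $\pi(\sigma^k w)<1$ for every $k\ge0$. Then I would run the greedy algorithm from $x:=\pi(w)$: since $\beta T_\beta^k(x)=w_{k+1}+\pi(\sigma^{k+1}w)$ with $0\le\pi(\sigma^{k+1}w)<1$, an induction gives $T_\beta^k x=\pi(\sigma^k w)\in[0,1)$ and $\epsilon_{k+1}(x,\beta)=w_{k+1}$ for all $k$, so $w=\epsilon(x,\beta)\in\Sigma_\beta$.

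The only genuinely non-routine point is the bound $\pi(v)\le1$ on the set carved out by the weak inequalities. For $\beta\ge2$ the projection $\pi$ is lexicographically monotone and there is essentially nothing to prove, but for $1<\beta<2$ — the case this paper cares about — $\pi$ fails to be monotone on arbitrary words, so one must really use that $v$ is admissible against $\epsilon^*(1,\beta)$ at \emph{every} shift; the induction on $N$ above is exactly the device that exploits this, and it is the step I would take most care over. The remaining ingredients — the two identities, monotonicity of the greedy map, and the finite-expansion bookkeeping in the necessity half — are routine.
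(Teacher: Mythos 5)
The paper does not prove this lemma at all: it is stated as a quoted classical result of Parry \cite{Pa60}, so there is no internal proof to compare yours against. Your blind argument is, however, a correct and essentially self-contained proof along the standard lines: the conjugacy identities $T_\beta^n x=\pi(\sigma^n\epsilon(x,\beta))$ and $\sigma^n\epsilon(x,\beta)=\epsilon(T_\beta^n x,\beta)$, strict monotonicity and injectivity of $x\mapsto\epsilon(x,\beta)$, the identity $\pi(\epsilon^*(1,\beta))=1$, and for sufficiency the partial-sum induction showing $\pi(v)\le 1$ under the weak inequalities, upgraded to $\pi(\sigma^k w)<1$ via the strictly positive tail $\pi(\sigma^{n_1}\epsilon^*(1,\beta))>0$, followed by the greedy reconstruction $T_\beta^k x=\pi(\sigma^k w)$, $\epsilon_{k+1}(x,\beta)=w_{k+1}$. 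I checked the two places where you compress: (i) in the induction for $S_N(v)\le 1$ you should note the degenerate cases $v=\epsilon^*(1,\beta)$ or $n_1>N$, where $S_N(v)=S_N(\epsilon^*(1,\beta))\le\pi(\epsilon^*(1,\beta))=1$ directly; (ii) in the finite-length necessity step, the iteration needs the observation that once the first $M$ digits of $\epsilon(y,\beta)$ agree with the periodic block, the hypothesis $\epsilon(y,\beta)\succeq\epsilon^*(1,\beta)$ passes to the tail, i.e.\ $\sigma^M\epsilon(y,\beta)=\epsilon(T_\beta^M y,\beta)\succeq\epsilon^*(1,\beta)$ by periodicity of $\epsilon^*(1,\beta)$, while $T_\beta^M y<1$ keeps the upper constraint. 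Both are routine fills, so the proposal stands as a valid proof of the cited criterion.
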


Noting that $\sigma_\beta(\Sigma_\beta)=\Sigma_\beta$, we use $\sigma_\beta:\Sigma_\beta\to\Sigma_\beta$ to denote the restriction of $\sigma$ on $\Sigma_\beta$ and then $(\Sigma_\beta,\sigma_\beta)$ is a dynamical system.

The continuous projection map $\pi_\beta:\Sigma_\beta\rightarrow[0,1)$ defined by
$$\pi_\beta(w)=\frac{w_1}{\beta}+\frac{w_2}{\beta^2}+\cdots+\frac{w_n}{\beta^n}+\cdots \quad \text{for } w \in \Sigma_\beta$$
is bijective with $\epsilon(\cdot,\beta):[0,1)\to\Sigma_\beta$ as its inverse.

\begin{definition}[Cylinder]\label{Cylinder}
Let $w\in\Sigma_\beta^*$. We call
$$[w]:=\{v\in\Sigma_\beta:v_i=w_i \text{ for all }1\le i\le |w|\}$$
the \textit{cylinder} in $\Sigma_\beta$ generated by $w$ and
$$I(w):=\pi_\beta([w])$$
the \textit{cylinder} in $[0,1)$ generated by $w$.
For any $x\in[0,1)$, the cylinder of order $n$ containing $x$ is denoted by
$$I_n(x):=I(\epsilon_1(x,\beta)\epsilon_2(x,\beta)\cdots\epsilon_n(x,\beta)).$$
\end{definition}

\begin{definition}[Full words and cylinders]\label{Deffull}
Let $w\in\Sigma_\beta^n$. If $T_\beta^nI(w)=[0, 1)$, we call the word $w$ and the cylinders $[w],I(w)$ \textit{full}.
\end{definition}

\begin{lemma}[\cite{BuWa14, FW12, LiWu08}]\label{admfull}
Let $w_1\cdots w_n\in\Sigma_\beta^*$ with $w_n \neq 0$. Then  for any $0\le w_n' < w_n$, $w_1 \cdots w_{n-1} w_n'$ is full.
\end{lemma}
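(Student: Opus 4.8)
The plan is to work directly with the affine structure of the iterates of $T_\beta$ on cylinders. Write $u:=w_1\cdots w_{n-1}$, which is admissible, being a prefix of the admissible word $w_1\cdots w_n$ (cf.\ the Remark after the definition of admissibility). First I would record the standard structural fact that on the nonempty cylinder $I(u)$ the map $T_\beta$ successively strips off the digits $u_1,u_2,\dots$, so that $T_\beta^{n-1}$ restricted to $I(u)$ is affine and increasing with slope $\beta^{n-1}$; and since appending zeros to an admissible word keeps it admissible (immediate from Parry's criterion, Lemma~\ref{charADM}), the left endpoint $\sum_{i=1}^{n-1}u_i\beta^{-i}$ of $I(u)$ belongs to $I(u)$ and is mapped to $0$ by $T_\beta^{n-1}$. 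Hence $T_\beta^{n-1}$ maps $I(u)$ bijectively onto a half-open interval $[0,\ell)$ for some $\ell>0$.

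Next I would locate the digit $w_n$ in this picture. Since $w_1\cdots w_n$ is admissible there is $x\in I(u)$ with $\epsilon_n(x,\beta)=w_n$, i.e.\ $\lfloor\beta\,T_\beta^{n-1}(x)\rfloor=w_n$, so $T_\beta^{n-1}(x)\ge w_n/\beta$; together with the previous step this forces $\ell>w_n/\beta$. Now fix $w_n'$ with $0\le w_n'<w_n$. Then $w_n'\le w_n-1$, so $(w_n'+1)/\beta\le w_n/\beta<\ell$; moreover $(w_n'+1)/\beta<1$, because $w_n'$ is not the largest element of $\cA_\beta$ (as $w_n'<w_n\in\cA_\beta$). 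Hence $[w_n'/\beta,(w_n'+1)/\beta)\subseteq[0,\ell)=T_\beta^{n-1}(I(u))$ and also $[w_n'/\beta,(w_n'+1)/\beta)\subseteq[0,1)$.

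To finish: the set of $x\in I(u)$ with $\epsilon_n(x,\beta)=w_n'$, which is precisely the cylinder $I(w_1\cdots w_{n-1}w_n')$, equals the $T_\beta^{n-1}$-preimage in $I(u)$ of $[w_n'/\beta,(w_n'+1)/\beta)$. Since $T_\beta^{n-1}\colon I(u)\to[0,\ell)$ is an affine bijection, this makes $I(w_1\cdots w_{n-1}w_n')$ a nonempty subinterval which $T_\beta^{n-1}$ maps onto $[w_n'/\beta,(w_n'+1)/\beta)$; applying $T_\beta$ once more, on that interval $T_\beta(y)=\beta y-w_n'$, which maps it onto $[0,1)$. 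Therefore $T_\beta^{n}I(w_1\cdots w_{n-1}w_n')=[0,1)$, i.e.\ $w_1\cdots w_{n-1}w_n'$ is full.

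I do not expect a genuine obstacle; the only care required is bookkeeping with half-open intervals — that $I(u)$ is a half-open interval whose left endpoint lies in $I(u)$, so that the image of $T_\beta^{n-1}$ is exactly $[0,\ell)$ (this is what guarantees $0$ is covered, which is needed precisely in the case $w_n'=0$), and that the single digit value $\lfloor\beta\rfloor$, for which $[w_n'/\beta,(w_n'+1)/\beta)\not\subseteq[0,1)$, is excluded by the hypothesis $w_n'<w_n$. Alternatively, one can argue purely combinatorially from Parry's criterion: lowering the last letter of the admissible word $w_1\cdots w_n$ makes every suffix of the resulting word strictly lexicographically smaller than the corresponding prefix of $\epsilon^*(1,\beta)$ already before its end, so an arbitrary admissible word may be appended — which is exactly fullness. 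The essential content is just that, among the one-letter extensions of $u$, only the one carrying the largest admissible last digit can fail to be full.
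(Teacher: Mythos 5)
Your argument is correct. Note that the paper itself gives no proof of Lemma~\ref{admfull}: it is quoted from \cite{BuWa14, FW12, LiWu08}, so there is no in-paper argument to compare against. Your main, geometric proof is sound: $T_\beta^{n-1}$ is affine increasing on the (order-convex) cylinder $I(u)$, the point $\pi_\beta(u0^\infty)$ lies in $I(u)$ and is sent to $0$ (you correctly justify this via Parry's criterion, and correctly flag that this is exactly what is needed when $w_n'=0$), and the existence of $x\in I(u)$ with $\epsilon_n(x,\beta)=w_n$ forces the image to contain $[0,w_n/\beta]\supseteq[w_n'/\beta,(w_n'+1)/\beta)$; one extra application of $T_\beta$ then gives $T_\beta^nI(w_1\cdots w_{n-1}w_n')=[0,1)$. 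In fact you do not even need the image of $I(u)$ to be exactly of the form $[0,\ell)$ (the only unproved "bookkeeping" fact you lean on, besides order-convexity of cylinders): containing $0$ together with the point $T_\beta^{n-1}(x)\ge w_n/\beta$ already suffices, since the image of an interval under an affine map is an interval. Your alternative combinatorial sketch is also correct and is essentially how the cited references argue: lowering the last digit makes every suffix of $w_1\cdots w_{n-1}w_n'$ strictly smaller than the corresponding prefix of $\epsilon^*(1,\beta)$ within its own length (possibly at its last letter, a harmless imprecision in your phrasing), so any admissible continuation can be appended, which is fullness by Proposition~\ref{charFULL}. The geometric route buys a self-contained argument from the definition of $T_\beta$; the combinatorial route is shorter but presupposes the equivalences of Proposition~\ref{charFULL}.
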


\begin{proposition}[\cite{LiLi18}]\label{charFULL}
Let $w \in \Sigma_\beta^n$. Then the following are equivalent.
\begin{itemize}
\item[\emph{(1)}] The word $w$ is full, i.e., $T_\beta^nI(w)=[0, 1)$.
\item[\emph{(2)}] $|I(w)| = \beta^{-n}$.
\item[\emph{(3)}] The sequence $ww'$ is admissible for any $w' \in \Sigma_\beta$.
\item[\emph{(4)}] The word $ww'$ is admissible for any $w' \in \Sigma_\beta^*$.
\item[\emph{(5)}] The word $w\epsilon_1^*\cdots\epsilon_k^*$ is admissible for any $k\ge 1$.
\item[\emph{(6)}] $\sigma^n[w]=\Sigma_\beta$.
\end{itemize}
\end{proposition}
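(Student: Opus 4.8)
The plan is to prove the chain of equivalences by a convenient cycle of implications, drawing on Parry's criterion (Lemma \ref{charADM}) throughout since "full" is fundamentally a statement about how far $T_\beta^n I(w)$ extends, which Parry's criterion translates into a lexicographic condition on the tails $w\sigma^k(w')$. First I would establish (1)$\Leftrightarrow$(2): the map $T_\beta^n$ restricted to $I(w)$ is affine with slope $\beta^n$ (since on each cylinder of order $n$ the iterate $T_\beta^n$ is $x\mapsto \beta^n x - (\text{integer})$), so $|T_\beta^n I(w)| = \beta^n |I(w)|$; since $T_\beta^n I(w)\subseteq[0,1)$ always, we get $T_\beta^n I(w)=[0,1)$ iff $|I(w)|=\beta^{-n}$, and it is worth noting that $T_\beta^n I(w)$ is always an interval of the form $[0,s)$ because $w$ is admissible, so equality of length forces equality of sets. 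Next, (1)$\Rightarrow$(6): if $T_\beta^n I(w)=[0,1)$ then $\pi_\beta(\sigma^n[w]) = T_\beta^n(\pi_\beta[w]) = T_\beta^n I(w) = [0,1) = \pi_\beta(\Sigma_\beta)$, and since $\pi_\beta$ is a bijection onto $[0,1)$ we conclude $\sigma^n[w]=\Sigma_\beta$; conversely (6)$\Rightarrow$(1) by applying $\pi_\beta$ to both sides. This already packages (1), (2), (6) together cheaply.

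For the remaining items I would argue (6)$\Rightarrow$(3)$\Rightarrow$(4)$\Rightarrow$(5)$\Rightarrow$(1), which is the natural direction of increasing specificity. The implication (6)$\Rightarrow$(3) is immediate: $\sigma^n[w]=\Sigma_\beta$ says precisely that for every $w'\in\Sigma_\beta$ there is an admissible sequence in $[w]$ whose tail after position $n$ is $w'$, i.e. $ww'\in\Sigma_\beta$. Then (3)$\Rightarrow$(4) is a direct specialization: given an admissible finite word $u\in\Sigma_\beta^*$, by the Remark after the Admissibility definition we can extend $u$ to an admissible sequence $u\tilde u\in\Sigma_\beta$ (e.g. append $0^\infty$, which is always admissible), so $wu\tilde u\in\Sigma_\beta$ by (3), and then the prefix $wu$ is admissible by the Remark. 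The step (4)$\Rightarrow$(5) is a further specialization once one knows every finite prefix $\epsilon_1^*\cdots\epsilon_k^*$ of the modified expansion of $1$ is itself admissible; this follows from Parry's criterion applied to $\epsilon^*(1,\beta)$, since all its shifts are $\preceq \epsilon^*(1,\beta)$ (for infinite $\epsilon(1,\beta)$ this is standard self-admissibility; for finite $\epsilon(1,\beta)$ with length $M$, $\epsilon^*(1,\beta)=(\epsilon_1\cdots\epsilon_{M-1}(\epsilon_M-1))^\infty$ and one checks its shifts are lexicographically dominated by itself).

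The crux — the step I expect to be the main obstacle — is (5)$\Rightarrow$(1), i.e. upgrading "$w$ followed by every finite prefix of $\epsilon^*(1,\beta)$ is admissible" to full-ness. The idea is: by Parry's criterion, $w\epsilon_1^*\cdots\epsilon_k^*$ admissible for all $k$ means that for each position $j$ inside $w$ (say $w$ has length $n$), the tail $w_{j+1}\cdots w_n\epsilon_1^*\cdots\epsilon_k^* \prec \epsilon^*(1,\beta)$ for all $k$, hence $w_{j+1}\cdots w_n\epsilon^*(1,\beta)\preceq \epsilon^*(1,\beta)$; combined with admissibility of $w$ itself this is a "strict enough" domination that leaves room for the cylinder $I(w)$ to extend all the way: concretely, the right endpoint of $I(w)$ is $\pi_\beta(w_1\cdots w_n \epsilon^*(1,\beta)$-type tail$)$ by the standard formula for cylinder endpoints (the left endpoint is $\pi_\beta(w0^\infty)$ and the length is governed by whether appending the worst admissible tail $\epsilon^*(1,\beta)$ keeps us admissible), and the hypothesis (5) is exactly what forces $|I(w)|=\beta^{-n}$. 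I would make this precise by recalling the endpoint formula $I(w_1\cdots w_n) = [\,\pi_\beta(w_1\cdots w_n 0^\infty),\ \pi_\beta(w_1\cdots w_n 0^\infty) + \beta^{-n}\pi_\beta(\text{sup admissible tail after }w)\,)$ and showing the hypothesis gives that the "sup admissible tail after $w$" reaches $\pi_\beta(\epsilon^*(1,\beta))=1$; alternatively, cite Lemma \ref{admfull} and Lemma \ref{charADM} to reduce (5) to a statement purely about $\sigma^k(w)$ versus $\epsilon^*(1,\beta)$ at the last block, closing the loop back to (2). Once this single implication is in hand, all six statements are equivalent.
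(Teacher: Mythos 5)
This proposition is quoted from \cite{LiLi18}; the present paper gives no proof of it, so there is no in-paper argument to compare against, and I can only assess your plan on its own terms. As a plan it is sound: (1)$\Leftrightarrow$(2) via the affinity of $T_\beta^n$ on $I(w)$ (which maps the left endpoint $\pi_\beta(w0^\infty)$ to $0$, so the image is $[0,\beta^n|I(w)|)$), (1)$\Leftrightarrow$(6) via the conjugacy $\pi_\beta\circ\sigma^n=T_\beta^n\circ\pi_\beta$ and bijectivity of $\pi_\beta$, the chain (6)$\Rightarrow$(3)$\Rightarrow$(4)$\Rightarrow$(5) by the easy specializations you describe, and then the crux (5)$\Rightarrow$(1), which does close correctly along your lines: since $w\epsilon_1^*\cdots\epsilon_k^*\in\Sigma_\beta^*$, the point $\pi_\beta(w\epsilon_1^*\cdots\epsilon_k^*0^\infty)$ lies in $I(w)$ and its image under $T_\beta^n$ is $\pi_\beta(\epsilon_1^*\cdots\epsilon_k^*0^\infty)$, which increases to $\pi_\beta(\epsilon^*(1,\beta))=1$ as $k\to\infty$; hence $T_\beta^nI(w)=[0,s)$ with $s\ge1$, forcing $s=1$. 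Two points deserve tightening. First, in (4)$\Rightarrow$(5) your phrase ``standard self-admissibility'' of $\epsilon^*(1,\beta)$ is not quite right: by Lemma \ref{charADM} the sequence $\epsilon^*(1,\beta)$ itself is \emph{not} admissible (the criterion is strict at $k=0$); what you need, and what is true, is that every truncation $\epsilon_1^*\cdots\epsilon_k^*$ is an admissible word, which follows from $\sigma^j\epsilon^*(1,\beta)\preceq\epsilon^*(1,\beta)$ for $j\ge1$ together with the fact that $\epsilon^*(1,\beta)$ is never eventually $0$ (so the truncation-plus-$0^\infty$ comparisons are strict). Second, in (5)$\Rightarrow$(1) replace the informal ``sup admissible tail'' endpoint formula by the exact statement above (points $\pi_\beta(w\epsilon_1^*\cdots\epsilon_k^*0^\infty)\in I(w)$ and $\pi_\beta(\epsilon_1^*\cdots\epsilon_k^*0^\infty)\uparrow1$); with that made explicit, the cycle of implications is complete.
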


\begin{proposition}[\cite{LiLi18}]\label{fullfullfull}
Let $w, w'\in\Sigma_\beta^*$ be full and $|w|=n\in\N$. Then
\begin{itemize}
\item[\emph{(1)}] the word $ww'$ is full (see also \cite{BuWa14});
\item[\emph{(2)}] the word $\sigma^k (w):=w_{k+1}\cdots w_n$ is full for any $1\le k<n$ ;
\item[\emph{(3)}] the digit $w_n<\lfloor\beta\rfloor$ if $\beta\notin\N$. In particular, $w_n=0$ if $1<\beta<2$.
\end{itemize}
\end{proposition}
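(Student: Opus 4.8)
The plan is to read off all three parts from the fullness characterisations collected in Proposition~\ref{charFULL}, supplemented only by the elementary observation that a suffix of an admissible word is again admissible: if $z_1\cdots z_m\in\Sigma_\beta^m$ is realised by $x\in[0,1)$, then $z_{j+1}\cdots z_m$ is realised by $T_\beta^j x\in[0,1)$, since $\epsilon_i(T_\beta^jx,\beta)=\epsilon_{i+j}(x,\beta)$.

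For part (1) I would use the equivalence (1)$\Leftrightarrow$(3) of Proposition~\ref{charFULL}. Writing $|w'|=n'$, it suffices to show $(ww')v\in\Sigma_\beta$ for every $v\in\Sigma_\beta$. Given such $v$, fullness of $w'$ gives $w'v\in\Sigma_\beta$ by (3) applied to $w'$, and then fullness of $w$ gives $w(w'v)=ww'v\in\Sigma_\beta$ by (3) applied to $w$; hence $ww'$ is full. (Equivalently, via (6): $\sigma^n[ww']=[w']$ because $w$ is full, so $\sigma^{n+n'}[ww']=\sigma^{n'}[w']=\Sigma_\beta$ because $w'$ is full.)

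For part (2), fix $1\le k<n$ and put $v:=w_{k+1}\cdots w_n$. By Proposition~\ref{charFULL}~(4) it is enough to show $vu\in\Sigma_\beta^*$ for every $u\in\Sigma_\beta^*$. Given $u$, fullness of $w$ together with Proposition~\ref{charFULL}~(4) yields $wu=w_1\cdots w_k\,v\,u\in\Sigma_\beta^*$; dropping the prefix $w_1\cdots w_k$ and invoking the suffix remark, $vu\in\Sigma_\beta^*$, so $v$ is full. (Alternatively, geometrically: $T_\beta^kI(w)\subseteq I(v)$, whence $[0,1)=T_\beta^nI(w)=T_\beta^{n-k}(T_\beta^kI(w))\subseteq T_\beta^{n-k}I(v)\subseteq[0,1)$, forcing $T_\beta^{n-k}I(v)=[0,1)$.)

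For part (3), suppose $\beta\notin\N$ and, for contradiction, that $w_n=\lfloor\beta\rfloor$, the largest element of $\cA_\beta$. For any $x\in I(w)$ we have $T_\beta^{n-1}x\in[0,1)$ and $\epsilon_n(x,\beta)=\lfloor\beta T_\beta^{n-1}x\rfloor=w_n=\lfloor\beta\rfloor$, so
$$T_\beta^nx=\beta T_\beta^{n-1}x-\lfloor\beta\rfloor<\beta-\lfloor\beta\rfloor.$$
Since $\beta\notin\N$ we have $\beta-\lfloor\beta\rfloor<1$, hence $T_\beta^nI(w)\subseteq[0,\beta-\lfloor\beta\rfloor)\subsetneq[0,1)$, contradicting fullness of $w$ (Definition~\ref{Deffull}). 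Therefore $w_n<\lfloor\beta\rfloor$, and for $1<\beta<2$ this forces $w_n=0$. All three arguments are short; if anything is the real point it is part (3), where one genuinely uses the non-integrality of $\beta$ (through $\beta-\lfloor\beta\rfloor<1$) rather than just the symbolic structure — for integer $\beta$ the last digit of a full word may be maximal. The only care point throughout is the half-open nature of the cylinders $I(w)$ and of the range $[0,1)$ of $T_\beta$, which causes no genuine difficulty.
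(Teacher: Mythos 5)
Your three arguments are all correct. Note that this paper does not prove Proposition \ref{fullfullfull} at all: it is imported from \cite{LiLi18}, just like Proposition \ref{charFULL}, so there is no in-paper proof to compare against; what you have shown is that, granting the characterisation of fullness in Proposition \ref{charFULL} together with the elementary fact that a suffix of an admissible word is admissible (which your computation $\epsilon_i(T_\beta^jx,\beta)=\epsilon_{i+j}(x,\beta)$ correctly justifies), all three items follow by short formal manipulations -- a perfectly sound derivation within this paper's logical setup. Two minor points are worth recording. In (1), before applying the equivalences of Proposition \ref{charFULL} to the word $ww'$ (or before writing the cylinder $[ww']$ in your alternative via item (6)), you should note that $ww'$ is admissible in the first place; this is immediate from fullness of $w$ and item (4) (and is implicit in your argument, since prefixes of the admissible sequences $ww'v$ are admissible), but it deserves a sentence because Proposition \ref{charFULL} is stated only for words in $\Sigma_\beta^n$. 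In (3), your direct dynamical argument, $T_\beta^nx=\beta T_\beta^{n-1}x-\lfloor\beta\rfloor<\beta-\lfloor\beta\rfloor<1$ for $x\in I(w)$, is exactly where non-integrality of $\beta$ enters, and your closing remark that for integer $\beta$ a full word may end in the maximal digit (e.g.\ the word $1$ when $\beta=2$) correctly delimits the scope of the statement.
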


\begin{proposition}[\cite{LiLi18}]\label{exaDIV}
(1) Any truncation of $\epsilon(1,\beta)$ is not full (if it is admissible). That is, $\epsilon(1,\beta)|_k$ is not full for any $k \in \N$ (if it is admissible).
\item(2) Let $k \in \mathbb{N}$. Then $\epsilon^*(1,\beta)|_k$ is full if and only if $\epsilon(1,\beta)$ is finite with length $M$ which exactly divides $k$, i.e., $M|k$.
\end{proposition}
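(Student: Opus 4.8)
The plan is to derive both parts from the characterisations of full words in Propositions~\ref{admfull}, \ref{charFULL} and \ref{fullfullfull}, using for part~(1) only the elementary bound $|I(w)|\le\beta^{-|w|}$ (the easy half of Proposition~\ref{charFULL}(2)) and for part~(2) the periodicity of $\epsilon^*(1,\beta)$ when $\beta$ is simple. For part~(1), I would first locate the left endpoint of $I(\epsilon_1\cdots\epsilon_k)$. Iterating the relation $T_\beta^{j+1}(1)=\beta T_\beta^{j}(1)-\epsilon_{j+1}$ yields $T_\beta^{k}(1)=\beta^{k}-\sum_{i=1}^{k}\epsilon_i\beta^{k-i}$, hence $\sum_{i=1}^{k}\epsilon_i\beta^{-i}=1-\beta^{-k}T_\beta^{k}(1)$. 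Since every $x\in I(\epsilon_1\cdots\epsilon_k)$ satisfies $\sum_{i=1}^{k}\epsilon_i\beta^{-i}\le x<1$, one gets $|I(\epsilon_1\cdots\epsilon_k)|\le\beta^{-k}T_\beta^{k}(1)$; as $T_\beta$ takes values in $[0,1)$ we have $T_\beta^{k}(1)<1$, so $|I(\epsilon_1\cdots\epsilon_k)|<\beta^{-k}$, in particular $|I(\epsilon_1\cdots\epsilon_k)|\neq\beta^{-k}$, and by Proposition~\ref{charFULL}(2) the word $\epsilon_1\cdots\epsilon_k$ is not full.

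For the ``if'' direction of part~(2), suppose $\epsilon(1,\beta)$ is finite of length $M$ and $k=Mq$. Then $\epsilon^*(1,\beta)=\bigl(\epsilon_1\cdots\epsilon_{M-1}(\epsilon_M-1)\bigr)^{\infty}$ is periodic with period $M$, so $\epsilon^*(1,\beta)|_{k}\,\epsilon_1^{*}\cdots\epsilon_m^{*}=\epsilon^*(1,\beta)|_{k+m}$ for every $m\ge1$. Each of these is an admissible word, because every finite prefix of $\epsilon^*(1,\beta)$ is admissible --- this is where Parry's criterion (Lemma~\ref{charADM}) and the self-admissibility of $\epsilon^*(1,\beta)$ come in. By Proposition~\ref{charFULL}(5) this means $\epsilon^*(1,\beta)|_{k}$ is full.

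For the ``only if'' direction, assume $\epsilon^*(1,\beta)|_{k}$ is full. If $\epsilon(1,\beta)$ were infinite, then $\epsilon^*(1,\beta)|_{k}=\epsilon(1,\beta)|_{k}$ is admissible but, by part~(1), not full, a contradiction; hence $\epsilon(1,\beta)$ is finite, of length $M$ say. If $M\nmid k$, write $k=Mq+r$ with $0<r<M$. Using periodicity, $\sigma^{Mq}\bigl(\epsilon^*(1,\beta)|_{k}\bigr)=\epsilon_1\cdots\epsilon_r=\epsilon(1,\beta)|_{r}$, which is a suffix of the full word $\epsilon^*(1,\beta)|_{k}$ and therefore full by Proposition~\ref{fullfullfull}(2), contradicting part~(1); when $q=0$ the contradiction is immediate since then $\epsilon^*(1,\beta)|_{k}=\epsilon(1,\beta)|_{k}$ already. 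Thus $M\mid k$, which completes the equivalence.

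I expect the only real difficulty to be bookkeeping rather than anything deep: one must check carefully that all finite prefixes of $\epsilon^*(1,\beta)$ are admissible, that $\epsilon(1,\beta)|_{k}$ is admissible precisely for $k$ below the length of a finite $\epsilon(1,\beta)$, and that the index constraints $1\le Mq<k$ required to invoke Proposition~\ref{fullfullfull}(2) are met, including the boundary cases $q=0$ and $k=M$. The interaction with Parry's criterion in the simple case is the place where one should be most careful, but none of these points is serious.
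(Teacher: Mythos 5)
Your proof is correct. Note, however, that this paper does not prove Proposition \ref{exaDIV} at all: it is quoted from \cite{LiLi18}, so there is no in-paper argument to compare against, and your proposal has to be judged on its own. On its own it is sound and uses exactly the toolkit the paper imports. For part (1), the identity $\sum_{i=1}^k\epsilon_i\beta^{-i}=1-\beta^{-k}T_\beta^k(1)$ together with $T_\beta^k(1)<1$ gives $|I(\epsilon_1\cdots\epsilon_k)|<\beta^{-k}$, and the easy half of Proposition \ref{charFULL}(2) finishes; this is essentially the standard argument (the right end of the cylinder $I(\epsilon(1,\beta)|_k)$ is governed by $T_\beta^k(1)$, so $T_\beta^kI(\epsilon(1,\beta)|_k)$ cannot be all of $[0,1)$). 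For part (2), using the periodicity of $\epsilon^*(1,\beta)$ with Proposition \ref{charFULL}(5) for the ``if'' direction, and Proposition \ref{fullfullfull}(2) plus part (1) for the ``only if'' direction, with the boundary case $q=0$ treated separately and the index constraint $1\le Mq<k$ checked, is correct. The only ingredient you use without proof is that every finite prefix of $\epsilon^*(1,\beta)$ is an admissible word; this is a standard fact that the present paper itself invokes tacitly (for instance in the proof of Lemma \ref{tau}), so relying on it is consistent with the paper's conventions, though to make it airtight you should appeal to the finite-word form of Parry's criterion rather than Lemma \ref{charADM}, which is stated only for infinite sequences and with strict inequality (indeed $\epsilon^*(1,\beta)$ itself is not in $\Sigma_\beta$ when $\beta$ is simple).
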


\begin{proposition}[\cite{LiLi18}]\label{tail-non-full}Let $w\in\Sigma_\beta^n$. Then $w$ is not full if and only if it ends with a prefix of $\epsilon(1,\beta)$. That is, when $\epsilon(1,\beta)$ is infinite (finite with length $M$), there exists $1\le s\le n$ ( $1\le s\le \min\{M-1,n\}$ respectively) such that $w=w_1\cdots w_{n-s}\epsilon_1\cdots\epsilon_s$.
\end{proposition}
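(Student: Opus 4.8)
The plan is to prove both implications of Proposition~\ref{tail-non-full}, the ``if'' part from the structure theory of full words and the ``only if'' part from Parry's criterion. For the ``if'' direction I would argue by contradiction: suppose $w$ ends with $\epsilon_1\cdots\epsilon_s$, where $1\le s\le n$ if $\epsilon(1,\beta)$ is infinite and $1\le s\le\min\{M-1,n\}$ if $\epsilon(1,\beta)$ is finite of length $M$, and suppose $w$ is full. If $s=n$ then $w=\epsilon(1,\beta)|_s$, which is admissible by hypothesis and therefore not full by Proposition~\ref{exaDIV}(1) --- a contradiction. If $s<n$ then by Proposition~\ref{fullfullfull}(2) the suffix $w_{n-s+1}\cdots w_n$ is also full; but this suffix is exactly $\epsilon_1\cdots\epsilon_s=\epsilon(1,\beta)|_s$, which is admissible (it is a suffix of the admissible word $w$; in the finite case the bound $s\le M-1$ is precisely what keeps $\epsilon(1,\beta)|_s$ admissible, since $\epsilon(1,\beta)|_M$ is not), hence not full by Proposition~\ref{exaDIV}(1), again a contradiction.

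For the ``only if'' direction, assume $w$ is not full. By Proposition~\ref{charFULL}(3) there is $v\in\Sigma_\beta$ with $wv\notin\Sigma_\beta$, so by Parry's criterion (Lemma~\ref{charADM}) there is an index $k\ge0$ with $\sigma^k(wv)\succeq\epsilon^*(1,\beta)$ (i.e.\ the inequality $\sigma^k(wv)\prec\epsilon^*(1,\beta)$ fails). Since $v$ is admissible we have $\sigma^j(v)\prec\epsilon^*(1,\beta)$ for every $j\ge0$, so the offending index must satisfy $k<n$; write $t:=n-k\in\{1,\dots,n\}$, so that $\sigma^k(wv)=w_{k+1}\cdots w_n\,v$. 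Because $w$ itself is admissible, the length-$t$ word $w_{k+1}\cdots w_n$ is lexicographically no greater than $\epsilon_1^*\cdots\epsilon_t^*$; and if the two differed at some position $\le t$ with $w_{k+1}\cdots w_n$ the smaller there, then $w_{k+1}\cdots w_n\,v'\prec\epsilon^*(1,\beta)$ for \emph{every} $v'$, contradicting the choice of $v$. Hence $w_{k+1}\cdots w_n=\epsilon_1^*\cdots\epsilon_t^*$, i.e.\ $w$ ends with the length-$t$ prefix of $\epsilon^*(1,\beta)$. (Here one uses that $\epsilon^*(1,\beta)$ is never eventually $0$, so that the relevant lexicographic comparisons are strict.)

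It remains to convert a prefix of $\epsilon^*(1,\beta)$ into a prefix of $\epsilon(1,\beta)$ of the claimed length. If $\epsilon(1,\beta)$ is infinite then $\epsilon^*(1,\beta)=\epsilon(1,\beta)$ and we are done with $s=t\le n$. If $\epsilon(1,\beta)$ is finite of length $M$, I first note that $t$ cannot be a multiple of $M$: since $\epsilon^*(1,\beta)$ is $M$-periodic, $M\mid t$ would give $\sigma^t(\epsilon^*(1,\beta))=\epsilon^*(1,\beta)$, so $\epsilon^*(1,\beta)|_t\,v=w_{k+1}\cdots w_n\,v\succeq\epsilon^*(1,\beta)$ would force $v\succeq\epsilon^*(1,\beta)$, which contradicts the admissibility of $v$. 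Writing $t=qM+r$ with $1\le r\le M-1$ and using the $M$-periodicity again, the last $r$ letters of $\epsilon_1^*\cdots\epsilon_t^*$ form the word $\epsilon_1^*\cdots\epsilon_r^*=\epsilon_1\cdots\epsilon_r$ (the modification alters only the $M$-th coordinate). Thus $w$ ends with $\epsilon_1\cdots\epsilon_r$ and $1\le r\le\min\{M-1,n\}$, as required.

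The main point to watch, and where I expect the real work to lie, is the finite case together with the careful tracking of strict versus non-strict lexicographic inequalities: the naive statement ``$w$ is not full iff it ends with a prefix of $\epsilon(1,\beta)$'' would be false without first excluding that the matched length $t$ is a multiple of $M$ and then folding $t$ back into $\{1,\dots,M-1\}$ by periodicity --- otherwise the full words $\epsilon^*(1,\beta)|_{qM}$ of Proposition~\ref{exaDIV}(2) would be misclassified as non-full.
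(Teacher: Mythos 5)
Your argument is correct, but note that this paper never proves Proposition \ref{tail-non-full} at all: it is imported verbatim from \cite{LiLi18}, so there is no in-paper proof to compare against. Judged on its own, your proof is sound and uses only tools the paper does quote. The ``if'' direction (ends with $\epsilon_1\cdots\epsilon_s$ $\Rightarrow$ not full) correctly combines Proposition \ref{fullfullfull}(2) (suffixes of full words are full) with Proposition \ref{exaDIV}(1), the admissibility hypotheses being supplied by $w\in\Sigma_\beta^n$ and, in the finite case, by the restriction $s\le M-1$. The ``only if'' direction is the more delicate one and you handle it properly: non-fullness gives, via Proposition \ref{charFULL}(3), an admissible $v$ with $wv\notin\Sigma_\beta$; Parry's criterion (Lemma \ref{charADM}) locates an offending shift $k<n$; the two lexicographic comparisons (admissibility of $w$ forces $w_{k+1}\cdots w_n\preceq\epsilon^*_1\cdots\epsilon^*_t$, while strict inequality at the first difference would make every continuation admissible-compatible, contradicting the choice of $v$) force exact equality with $\epsilon^*_1\cdots\epsilon^*_t$; and in the simple-$\beta$ case you correctly rule out $M\mid t$ (otherwise $v\succeq\sigma^t\epsilon^*=\epsilon^*$, contradicting admissibility of $v$) before folding $t$ back to $r\in\{1,\dots,M-1\}$ by $M$-periodicity of $\epsilon^*$, which is exactly the point that keeps the full words $\epsilon^*(1,\beta)|_{qM}$ of Proposition \ref{exaDIV}(2) from being misclassified. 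Two cosmetic remarks: the step ``if the two differed with $w$ smaller'' should be phrased as ``at the first differing position,'' which is what your preceding sentence already guarantees, and the parenthetical about $\epsilon^*(1,\beta)$ not being eventually $0$ is true but not actually needed for the argument as you have structured it.
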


For $n\in\N$, we use $l_n(\beta)$ to denote the number of $0$s following $\epsilon_n^*(1,\beta)$ as in \cite{LiWu08}, i.e.,
$$l_n(\beta):=\sup\{k\ge1:\epsilon_{n+j}^*(1,\beta)=0 \text{ for all } 1\le j\le k\}$$
where by convention $\sup\emptyset:=0$. The set of $\beta>1$ such that the length of the strings of $0$s in $\epsilon^*(1,\beta)$ is bounded is denoted by
$$A_0:=\{\beta>1: \{l_n(\beta)\}_{n\ge1}\text{ is bounded}\}.$$

\begin{proposition}[\cite{LiWu08}]\label{cylinderlength}
Let $\beta>1$. Then $\beta\in A_0$ if and only if there exists a constant $c>0$ such that for all $x\in[0,1)$ and $n\ge1$,
$$c\cdot\frac{1}{\beta^n}\le|I_n(x)|\le\frac{1}{\beta^n}$$
\end{proposition}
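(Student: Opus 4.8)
The plan is to peel off the trivial half of the inequality and reduce the real content — the lower bound $|I_n(x)| \ge c\beta^{-n}$ — to a single combinatorial question about $\epsilon(1,\beta)$: \emph{how many zeros must one append to an admissible word to turn it into a full word?} The upper bound is immediate, since on the cylinder $I_n(x)=I(\epsilon_1(x,\beta)\cdots\epsilon_n(x,\beta))$ the map $T_\beta^n$ acts as an affine map of slope $\beta^n$ onto a subinterval of $[0,1)$, so $\beta^n|I_n(x)| \le 1$ (one could also quote Proposition \ref{charFULL}). Hence it suffices to prove that $\inf\{\beta^n|I(w)| : n\ge 1,\ w\in\Sigma_\beta^n\}>0$ if and only if $\beta\in A_0$, i.e. if and only if $L := \sup_{n\ge1} l_n(\beta) < \infty$.

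The first step is to establish the key claim: \textbf{if $L<\infty$, then $w0^{L+1}$ is full for every $w\in\Sigma_\beta^*$.} (That $w0^{L+1}$ is admissible is a routine consequence of Parry's criterion, Lemma \ref{charADM}, using that $\epsilon^*(1,\beta)$ begins with $\lfloor\beta\rfloor\ge1$ and is never eventually $0$.) For fullness I would argue by contradiction: if $w0^{L+1}$ is not full, Proposition \ref{tail-non-full} says it ends with $\epsilon_1\cdots\epsilon_j$, a prefix of $\epsilon(1,\beta)$; since the last $L+1$ symbols of $w0^{L+1}$ are $0$ while $\epsilon_1\ne0$, we must have $j\ge L+2$ and this prefix must terminate in $L+1$ zeros, i.e. $\epsilon_{j-L}=\cdots=\epsilon_j=0$. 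Reading these positions on $\epsilon^*(1,\beta)$ (which agrees with $\epsilon(1,\beta)$ there, also in the simple case, where Proposition \ref{tail-non-full} forces $j\le M-1$), we get $L+1$ consecutive zeros in $\epsilon^*(1,\beta)$, so $l_{j-L-1}(\beta)\ge L+1$, contradicting the definition of $L$.

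Granting the key claim, the implication ``$\beta\in A_0\Rightarrow$ bounds'' is short: for $x\in[0,1)$ and $n\ge1$ put $w=\epsilon_1(x,\beta)\cdots\epsilon_n(x,\beta)$; since $w0^{L+1}$ is full, Proposition \ref{charFULL} gives $|I(w0^{L+1})|=\beta^{-(n+L+1)}$, and $I(w0^{L+1})\subseteq I(w)=I_n(x)$ yields $|I_n(x)|\ge\beta^{-(L+1)}\beta^{-n}$, so $c=\beta^{-(L+1)}$ works. For the converse I would take the contrapositive: if $\beta\notin A_0$ then $\epsilon(1,\beta)$ must be infinite (a simple $\beta$ has periodic, hence bounded, $\{l_n(\beta)\}$), so $\epsilon^*(1,\beta)=\epsilon(1,\beta)$, and there are $n_k\to\infty$ with $\ell_k:=l_{n_k}(\beta)\to\infty$. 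Using the prefix word $w=\epsilon_1\cdots\epsilon_{n_k}$ of $\epsilon(1,\beta)$, which is admissible but not full (Proposition \ref{exaDIV}), the point is that $I(w)=I(\epsilon_1\cdots\epsilon_{n_k}0^{\ell_k})$: any $y\in I(w)$ has $\epsilon(y,\beta)$ agreeing with $\epsilon(1,\beta)$ on the first $n_k$ coordinates and satisfying $\epsilon(y,\beta)\prec\epsilon(1,\beta)$ by Lemma \ref{charADM}, so the first coordinate of disagreement cannot fall inside the ensuing block of $\ell_k$ zeros of $\epsilon(1,\beta)$. Then the trivial upper bound applied to the order-$(n_k+\ell_k)$ cylinder gives $|I_{n_k}(y)|=|I(w)|\le\beta^{-(n_k+\ell_k)}=\beta^{-\ell_k}\beta^{-n_k}$, and $\beta^{-\ell_k}\to0$ precludes any uniform lower bound.

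I expect the main obstacle to be the key claim, specifically the bookkeeping that distinguishes $\epsilon(1,\beta)$ from $\epsilon^*(1,\beta)$ and the degenerate behaviour at simple $\beta$ (where Proposition \ref{tail-non-full} only licenses terminal prefixes of length $<M$): one must verify that the forced block of $L+1$ consecutive zeros genuinely sits in a stretch where $\epsilon(1,\beta)$ and $\epsilon^*(1,\beta)$ coincide, so that it legitimately contradicts $\sup_n l_n(\beta)<\infty$. Everything else — the affine upper bound, the admissibility of words obtained by appending zeros to admissible words or truncating $\epsilon(1,\beta)$, and the Parry-criterion manipulations — is routine.
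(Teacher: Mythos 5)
Your proposal is correct. Note that the paper itself gives no proof of Proposition \ref{cylinderlength}: it is imported verbatim from \cite{LiWu08}, so there is no in-paper argument to compare against; what you have written is a self-contained reconstruction using this paper's own toolkit (Lemma \ref{charADM}, Propositions \ref{charFULL} and \ref{tail-non-full}), which is in the same spirit as the original Li--Wu argument. The two points that carry the real weight both check out. First, the key claim: if $w0^{L+1}$ were admissible but not full, Proposition \ref{tail-non-full} forces it to end in $\epsilon_1\cdots\epsilon_s$ with $s\ge L+2$ (since $\epsilon_1\ge1$ while the last $L+1$ symbols are $0$), so $\epsilon_{s-L}=\cdots=\epsilon_s=0$; your bookkeeping that these positions lie where $\epsilon(1,\beta)$ and $\epsilon^*(1,\beta)$ coincide is exactly right, because in the simple case Proposition \ref{tail-non-full} caps $s$ at $M-1$, and in the infinite case the two sequences are equal, so $l_{s-L-1}(\beta)\ge L+1$ contradicts the definition of $L$ (and if $M-1<L+2$ no admissible $s$ exists at all, which is also a contradiction). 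Second, in the converse you correctly use that every $v\in[\epsilon_1\cdots\epsilon_{n_k}]$ satisfies the strict inequality $v\prec\epsilon(1,\beta)$ of Lemma \ref{charADM}, so the first disagreement cannot occur inside the block of $\ell_k$ zeros, giving $[\epsilon_1\cdots\epsilon_{n_k}]=[\epsilon_1\cdots\epsilon_{n_k}0^{\ell_k}]$ and hence $|I_{n_k}(y)|\le\beta^{-\ell_k}\beta^{-n_k}$, which rules out any uniform constant; the preliminary reduction that $\beta\notin A_0$ forces $\epsilon(1,\beta)$ infinite is also justified, since for simple $\beta$ the periodic $\epsilon^*(1,\beta)$ has zero-runs of length at most $M-1$. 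The admissibility of $w0^{k}$ for admissible $w$, which you flag as routine, is indeed a direct Parry-criterion check. I see no gap.
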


\begin{proposition}[\cite{BuWa14} Covering properties]\label{coveringproperties}
Let $\beta>1$. For any $x\in[0,1)$ and any positive integer $n$, the ball $B(x,\beta^{-n})$ intersected with $[0,1)$ can be covered by at most $4(n+1)$ cylinders of order $n$.
\end{proposition}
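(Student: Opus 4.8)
Write $J := B(x,\beta^{-n})\cap[0,1)$; it is an interval of Lebesgue length at most $2\beta^{-n}$, and since $\pi_\beta$ is order preserving the cylinders $I(w)$, $w\in\Sigma_\beta^n$, are intervals partitioning $[0,1)$ ordered lexicographically by $w$. So I must bound the number of $w\in\Sigma_\beta^n$ with $I(w)\cap J\neq\emptyset$. The plan is to control how non-full cylinders (which may be arbitrarily short, so a naive length count fails) can cluster together, and then group cylinders into blocks each of which contains a full cylinder and hence has length at least $\beta^{-n}$.

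The heart of the argument is the following claim, proved by induction on $n$: \emph{a maximal run of consecutive (in the lexicographic order) non-full cylinders of order $n$ contains at most $n$ cylinders}. For $n=1$ this is immediate from Lemma~\ref{admfull}: among the order-$1$ cylinders $I(0),\dots,I(\lfloor\beta\rfloor)$ only $I(\lfloor\beta\rfloor)$ can fail to be full. For the inductive step, first observe (again via Lemma~\ref{admfull}) that if $I(w_1\cdots w_n)$ is not full then $w_1\cdots w_{n-1}(w_n+1)$ is \emph{not} admissible --- otherwise Lemma~\ref{admfull} would force $w_1\cdots w_{n-1}w_n$ to be full. Thus every non-full order-$n$ cylinder is the \emph{rightmost child} of its parent $I(w_1\cdots w_{n-1})$. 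Now take a maximal run $C_1<C_2<\dots<C_j$ of non-full order-$n$ cylinders. Since $C_i$ is the last child of its parent, $C_{i+1}$ is the first child of the next order-$(n-1)$ cylinder; but $C_{i+1}$ is also non-full, hence also the last child of its parent, so $C_{i+1}$ is the \emph{only} child of its parent and therefore coincides (as an interval) with an order-$(n-1)$ cylinder, which moreover cannot be full (a full order-$(n-1)$ cylinder has $T_\beta^{n-1}$-image $[0,1)$ and hence at least two children by Proposition~\ref{charFULL}). Chasing this, $C_2,\dots,C_j$ are $j-1$ consecutive non-full cylinders of order $n-1$, so $j-1\le n-1$ by the inductive hypothesis. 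This inductive step is the part I expect to require the most care, mainly in tracking the parent/child structure cleanly.

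With the run bound in hand, partition $\Sigma_\beta^n$ (equivalently, the order-$n$ cylinders of $[0,1)$) into \emph{blocks}, each consisting of one full cylinder followed by the maximal run of non-full cylinders immediately to its right; this is a genuine partition because the leftmost cylinder $I(0^n)$ is full, and by the claim each block has at most $n+1$ cylinders. By Proposition~\ref{charFULL} each block has Lebesgue length at least $\beta^{-n}$ (it contains a full cylinder). Finally, if $J$ meets blocks $B_1<\dots<B_M$, then $B_2,\dots,B_{M-1}$ are each contained in the interval $J$, so $(M-2)\beta^{-n}\le\sum_{i=2}^{M-1}|B_i|\le|J|\le 2\beta^{-n}$, giving $M\le 4$. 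Hence the number of order-$n$ cylinders meeting $B(x,\beta^{-n})\cap[0,1)$ is at most $M(n+1)\le 4(n+1)$, which is Proposition~\ref{coveringproperties}.
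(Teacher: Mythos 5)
Your argument is correct. Note that the paper does not prove this proposition at all --- it is quoted from [BuWa14] --- so there is nothing internal to compare against; your write-up is a valid self-contained proof, and its key step (every non-full order-$n$ cylinder is the rightmost child of its parent, whence a run of consecutive non-full order-$n$ cylinders collapses, after its first element, to a run of consecutive non-full order-$(n-1)$ cylinders, giving the bound $n$ on run lengths and hence at least one full cylinder, of length exactly $\beta^{-n}$, among any $n+1$ consecutive ones) is essentially the same mechanism underlying the original Bugeaud--Wang proof.
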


\begin{definition}[Absolute continuity and equivalence]
Let $\mu$ and $\nu$ be measures on a measurable space $(X,\cF)$. We say that $\mu$ is \textit{absolutely continuous} with respect to $\nu$ and denote it by $\mu \ll \nu$ if $\nu(A) = 0$ implies $\mu(A) = 0$ for any $A \in\cF$. Moreover, if $\mu\ll\nu$ and $\nu\ll\mu$ we say that $\mu$ and $\nu$ are \textit{equivalent} and denote it by $\mu\sim\nu$.
\end{definition}

By the structure of cylinders, the following lemma follows from a similar proof of Lemma 1. (i) in \cite{Wal78}.

\begin{lemma}\label{countabledisjointunionoffull}
Any cylinder (in $\Sigma_\beta$ or $[0,1)$) can be written as a countable disjoint union of full cylinders.
\end{lemma}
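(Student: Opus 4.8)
The plan is to establish the statement for cylinders in $\Sigma_\beta$ and then transfer it to $[0,1)$. Since $\pi_\beta\colon\Sigma_\beta\to[0,1)$ is a bijection with $\pi_\beta([w])=I(w)$, it carries disjoint cylinders to disjoint cylinders and, because fullness of a word is intrinsic (equivalently $T_\beta^nI(w)=[0,1)$ or $\sigma^n[w]=\Sigma_\beta$, by Proposition~\ref{charFULL}), full cylinders to full cylinders; hence a decomposition $[w]=\bigsqcup_i[v_i]$ into full cylinders in $\Sigma_\beta$ yields $I(w)=\bigsqcup_iI(v_i)$ into full cylinders in $[0,1)$, and it suffices to work in $\Sigma_\beta$. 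Fix an admissible word $w$; if $w$ is full there is nothing to prove.

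The heart of the argument is a greedy digit-by-digit splitting. For an admissible word $v$ let $a(v)$ be the largest digit $a$ with $va$ admissible; then the one-digit admissible extensions of $v$ are exactly $\{0,1,\dots,a(v)\}$, so $[v]=\bigsqcup_{b=0}^{a(v)}[vb]$, and Lemma~\ref{admfull} shows $[vb]$ is full for every $b<a(v)$ (vacuous when $a(v)=0$). Put $w^{(0)}:=w$ and, as long as $w^{(j)}$ is not full, set $w^{(j+1)}:=w^{(j)}a(w^{(j)})$, recording the full cylinders $[w^{(j)}b]$ with $0\le b<a(w^{(j)})$. For every stage $k$ reached before the process stops one has the disjoint decomposition
$$[w]=\Big(\bigsqcup_{j=0}^{k-1}\ \bigsqcup_{0\le b<a(w^{(j)})}[w^{(j)}b]\Big)\sqcup[w^{(k)}],$$
in which all the listed cylinders are full. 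It remains to decide when the process stops. While $w^{(j)}$ is not full, Proposition~\ref{tail-non-full} lets us write $w^{(j)}=u\,\epsilon_1\cdots\epsilon_{s_j}$ with $s_j\ge1$ maximal; a short comparison with Parry's criterion (Lemma~\ref{charADM}) --- the binding restriction on appending a digit to $w^{(j)}$ comes from the \emph{longest} suffix of $w^{(j)}$ equal to a prefix of $\epsilon(1,\beta)$, and the inequalities $\sigma^k\epsilon^*(1,\beta)\preceq\epsilon^*(1,\beta)$ show shorter such suffixes are no more restrictive --- identifies $a(w^{(j)})=\epsilon^*_{s_j+1}$, with $u$ unchanged and $s_{j+1}=s_j+1$ as long as the process continues.

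Two cases result. If $\epsilon(1,\beta)$ is infinite, the process never stops: $w^{(j)}=u\,\epsilon_1\cdots\epsilon_{s+j}$ for all $j$, the words $w^{(j)}$ converge in $\mathcal{A}_\beta^\N$ to $\zeta=u\,\epsilon(1,\beta)$, and since $\sigma^{|u|}\zeta=\epsilon(1,\beta)=\epsilon^*(1,\beta)$ is not $\prec\epsilon^*(1,\beta)$, Lemma~\ref{charADM} gives $\zeta\notin\Sigma_\beta$; thus $\bigcap_k[w^{(k)}]=\emptyset$ in $\Sigma_\beta$ and
$$[w]=\bigsqcup_{j\ge0}\ \bigsqcup_{0\le b<a(w^{(j)})}[w^{(j)}b]$$
is a countable disjoint union of full cylinders (with infinitely many non-empty terms, as $\epsilon(1,\beta)$ has infinitely many non-zero digits). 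If instead $\epsilon(1,\beta)$ is finite of length $M$, then $s_j$ strictly increases until $s_j=M-1$, at which point $w^{(j+1)}=u\,\epsilon_1\cdots\epsilon_{M-1}(\epsilon_M-1)=u\,\epsilon^*(1,\beta)|_M$, and I claim this word is full; then the process terminates and $[w]$ is a \emph{finite} disjoint union of full cylinders.

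The step I expect to be the main obstacle is this last claim that $u\,\epsilon^*(1,\beta)|_M$ is full. By Proposition~\ref{tail-non-full} it is enough to show that the word ends with no prefix of $\epsilon(1,\beta)$; assuming it does, and feeding this into the self-admissibility inequalities $\sigma^k\epsilon^*(1,\beta)\preceq\epsilon^*(1,\beta)$ together with the elementary bound $\epsilon_M\le\lfloor\beta\rfloor=\epsilon_1$, one is forced to conclude that $\epsilon^*(1,\beta)$ is purely periodic with minimal period strictly less than $M$ --- which cannot happen, since such a period would produce a $\beta$-representation of $1$ lexicographically larger than the greedy expansion $\epsilon(1,\beta)$ (alternatively, one can argue via ``$v^k$ full $\Rightarrow v$ full'', immediate from Proposition~\ref{charFULL}(2) and the affine action of $T_\beta^{|v|}$ on $I(v)$, combined with Proposition~\ref{exaDIV}(2)). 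The remaining items --- the exact value of $a(v)$, the monotonicity $s_{j+1}=s_j+1$, and the disjointness and exhaustion of the recorded cylinders --- are routine bookkeeping with Lemma~\ref{admfull} and Propositions~\ref{charFULL}--\ref{tail-non-full}.
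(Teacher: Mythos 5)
Your proof is correct, but it takes a genuinely different route from the paper, which in fact gives no argument of its own: the paper disposes of this lemma by remarking that it ``follows from a similar proof of Lemma 1.(i) in \cite{Wal78}''. What you supply instead is a self-contained greedy construction inside the paper's own toolkit: at each stage you split $[v]=\bigsqcup_{b\le a(v)}[vb]$, peel off the cylinders $[vb]$ with $b<a(v)$, which are full by Lemma \ref{admfull}, and follow the unique maximal continuation, which by Proposition \ref{tail-non-full} is forced to track the expansion of $1$; in the infinite case the residual cylinders shrink onto $u\,\epsilon(1,\beta)$, which is non-admissible by Lemma \ref{charADM}, so the recorded full cylinders exhaust $[w]$, while in the finite case the process terminates at the full word $u\,\epsilon^*(1,\beta)|_M$. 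This is essentially the classical decomposition underlying Walters' lemma, so both routes deliver the same statement, but yours is explicit and self-contained, and it even yields the sharper conclusion that for simple $\beta$ every cylinder is a \emph{finite} disjoint union of full cylinders.

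Two places deserve tightening, though neither is a genuine gap. First, in identifying $a(w^{(j)})=\epsilon^*_{s_j+1}$ you only consider suffixes of $w^{(j)}$ that are prefixes of $\epsilon(1,\beta)$; when $\epsilon(1,\beta)$ is finite of length $M$, a suffix may coincide with a prefix of $\epsilon^*(1,\beta)$ of length $\ge M$ without being a prefix of $\epsilon(1,\beta)$, and one should note that by the $M$-periodicity of $\epsilon^*(1,\beta)$ such suffixes impose the same restriction (or simply run the whole bookkeeping with $\epsilon^*$-prefixes throughout). Second, the fullness of $u\,\epsilon^*(1,\beta)|_M$ admits a shortcut avoiding your periodicity detour: if its last $s\le M-1$ digits were $\epsilon_1\cdots\epsilon_s$, then $\epsilon_{M-s+1}\cdots\epsilon_{M-1}=\epsilon_1\cdots\epsilon_{s-1}$ and $\epsilon_M-1=\epsilon_s$, so $\sigma^{M-s}\epsilon(1,\beta)=\epsilon_1\cdots\epsilon_{s-1}(\epsilon_s+1)0^\infty\succ\epsilon^*(1,\beta)$, contradicting Lemma \ref{charADM} applied to the admissible sequence $\epsilon(T_\beta^{M-s}1,\beta)$. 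If you do keep the periodicity argument, note that the representation of $1$ exceeding the greedy one is not the periodic sequence itself (which equals $\epsilon^*(1,\beta)\prec\epsilon(1,\beta)$) but the finite word $\epsilon_1\cdots\epsilon_{d-1}(\epsilon_d+1)0^\infty$ produced from the period $d<M$; as phrased, your parenthetical is slightly misleading, though the contradiction it points to is real.
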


In order to extend some properties from a small family to a larger one in some proofs in Section 4, we recall the following two well-known theorems as basic knowledge of measure theory. For more details, see for examples \cite{Co80} and \cite{Dud02}.

\begin{theorem}[Monotone class theorem]\label{monotone class theorem}
Let $\cA$ be an algebra and $M(\cA)$ be the smallest monotone class containing $\cA$. Then $M(\cA)$ is precisely the $\sigma$-algebra generated by $\cA$, i.e., $\sigma(\cA)=M(\cA)$.
\end{theorem}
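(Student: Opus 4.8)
The plan is to prove the two inclusions $M(\cA)\subseteq\sigma(\cA)$ and $\sigma(\cA)\subseteq M(\cA)$ separately. The first is immediate: every $\sigma$-algebra is in particular a monotone class (it is closed under increasing unions and decreasing intersections), so $\sigma(\cA)$ is a monotone class containing $\cA$, and minimality of $M(\cA)$ gives $M(\cA)\subseteq\sigma(\cA)$.

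For the reverse inclusion the key reduction is the observation that \emph{a collection which is simultaneously an algebra and a monotone class is automatically a $\sigma$-algebra}: if $\cC$ is such a collection and $A_n\in\cC$, then $\bigcup_{n=1}^N A_n\in\cC$ because $\cC$ is an algebra, these finite unions increase to $\bigcup_n A_n$, and the monotone class property closes the gap. Hence it suffices to show $M(\cA)$ is an algebra; then $M(\cA)$ is a $\sigma$-algebra containing $\cA$, forcing $\sigma(\cA)\subseteq M(\cA)$, and the proof is complete.

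To show $M(\cA)$ is an algebra I would run two minimality arguments. First, for closure under complements, set $\calD:=\{A\in M(\cA):A^c\in M(\cA)\}$; since $\cA$ is an algebra we have $\cA\subseteq\calD$, and $\calD$ is easily checked to be a monotone class (if $A_n\uparrow A$ with $A_n\in\calD$, then $A\in M(\cA)$ and $A_n^c\downarrow A^c$ with each $A_n^c\in M(\cA)$, so $A^c\in M(\cA)$; the decreasing case is dual). Minimality gives $\calD=M(\cA)$, so $M(\cA)$ is closed under complementation, and $X\in\cA\subseteq M(\cA)$. Second, for closure under finite unions, fix a set $A$ and put
$$\calG_A:=\{B\in M(\cA): A\cup B\in M(\cA),\ A\setminus B\in M(\cA),\ B\setminus A\in M(\cA)\}.$$
One checks that $\calG_A$ is a monotone class for every $A$, and that the defining condition is symmetric, i.e. $B\in\calG_A\iff A\in\calG_B$. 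Applying this with $A\in\cA$: since $\cA$ is an algebra, $\cA\subseteq\calG_A$, so $\calG_A=M(\cA)$ by minimality; equivalently $\cA\subseteq\calG_B$ for every $B\in M(\cA)$, and a second application of minimality yields $\calG_B=M(\cA)$ for all $B\in M(\cA)$. Thus $M(\cA)$ is closed under finite unions and differences, and together with the first step it is an algebra.

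The only real subtlety — and the main pitfall for a careless write-up — is the bootstrapping in the second step: one cannot establish $\calG_A=M(\cA)$ for all $A\in M(\cA)$ in a single pass, but must invoke minimality twice, first along the generating algebra $\cA$ and then along all of $M(\cA)$, exploiting the symmetry $B\in\calG_A\iff A\in\calG_B$. Beyond that, the only thing requiring care is the verification that $\calD$ and each $\calG_A$ are genuinely monotone classes, which reduces to checking that the relevant Boolean operations are stable under increasing unions and decreasing intersections — elementary, but worth doing explicitly.
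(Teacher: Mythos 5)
Your proof is correct: both inclusions are argued properly, the reduction to showing that $M(\cA)$ is an algebra is the right one, and the double use of minimality via the symmetric classes $\calG_A$ (first for $A\in\cA$, then for arbitrary $A\in M(\cA)$) is exactly the step that makes the argument work. Note, however, that the paper does not prove this statement at all — it is recalled as standard background and attributed to the textbooks of Cohn and Dudley — so there is no proof in the paper to compare against; what you have written is the classical monotone class argument found in those references, and it is complete.
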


\begin{theorem}[Dynkin's $\pi$-$\lambda$ theorem]\label{dynkin theorem}
Let $\cC$ be a $\pi$-system and $\cG$ be a $\lambda$-system with $\cC\subset\cG$. Then the $\sigma$-algebra generated by $\cC$ is contained in $\cG$, i.e., $\sigma(\cC)\subset\cG$.
\end{theorem}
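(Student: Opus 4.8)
The plan is to reduce the statement to the standard fact that a $\lambda$-system which is also a $\pi$-system is automatically a $\sigma$-algebra. Let $\ell(\cC)$ denote the smallest $\lambda$-system containing $\cC$; this is well-defined because an intersection of $\lambda$-systems is again a $\lambda$-system, and since $\cG$ is one such $\lambda$-system with $\cC\subset\cG$ we already get $\ell(\cC)\subset\cG$. Hence it suffices to show $\sigma(\cC)\subset\ell(\cC)$, and for that it is enough to prove that $\ell(\cC)$ is closed under finite intersections, i.e. is a $\pi$-system. Indeed, once $\ell(\cC)$ is simultaneously a $\lambda$-system and a $\pi$-system, it contains the whole space, is closed under complements and under finite intersections, hence under finite unions, and therefore under all countable unions (write a countable union as the increasing limit of its finite partial unions and apply closure under countable increasing unions), so it is a $\sigma$-algebra containing $\cC$, and thus contains $\sigma(\cC)$.

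To prove $\ell(\cC)$ is a $\pi$-system I would use the \emph{good sets} principle. For a fixed set $A\subset X$ define
$$\cD_A:=\{B\subset X: A\cap B\in\ell(\cC)\}.$$
The first step is to check that whenever $A\in\ell(\cC)$ the family $\cD_A$ is a $\lambda$-system: it contains $X$ since $A\cap X=A\in\ell(\cC)$; if $B_1\subset B_2$ both lie in $\cD_A$ then $A\cap(B_2\setminus B_1)=(A\cap B_2)\setminus(A\cap B_1)$ is a proper difference of members of $\ell(\cC)$, hence in $\ell(\cC)$; and if $B_n\uparrow B$ with each $B_n\in\cD_A$ then $A\cap B_n\uparrow A\cap B$, so $A\cap B\in\ell(\cC)$. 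This verification, done consistently against whichever axiomatization of $\lambda$-system is in force, is the only computational point in the argument and is routine.

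With this in hand the proof closes in two passes. First take $A\in\cC$: since $\cC$ is a $\pi$-system, $A\cap B\in\cC\subset\ell(\cC)$ for every $B\in\cC$, so $\cC\subset\cD_A$, and as $\cD_A$ is a $\lambda$-system this forces $\ell(\cC)\subset\cD_A$. Unwound, this says: $A\cap B\in\ell(\cC)$ for all $A\in\cC$ and all $B\in\ell(\cC)$. Second, fix an arbitrary $B\in\ell(\cC)$; what was just proved is exactly the statement $\cC\subset\cD_B$, and again $\cD_B$ is a $\lambda$-system (by the first step, now applied with the member $B$ of $\ell(\cC)$), so $\ell(\cC)\subset\cD_B$. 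Therefore $A\cap B\in\ell(\cC)$ for all $A,B\in\ell(\cC)$, i.e. $\ell(\cC)$ is a $\pi$-system, and the reduction above completes the proof.

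I expect the only real obstacle to be bookkeeping rather than mathematical difficulty: one must fix one of the equivalent definitions of a $\lambda$-system (closure under proper differences versus closure under complements; closure under countable increasing unions versus under countable disjoint unions) and make sure the verification that $\cD_A$ is a $\lambda$-system and the final passage from ``$\lambda$-system $+$ $\pi$-system'' to ``$\sigma$-algebra'' both use that same definition. Once a convention is chosen the argument above goes through verbatim.
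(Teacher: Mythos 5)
Your proof is correct: it is the canonical Dynkin argument, generating the smallest $\lambda$-system $\ell(\cC)\subset\cG$, running the good-sets family $\cD_A$ twice to show $\ell(\cC)$ is a $\pi$-system, and then invoking the standard fact that a family which is both a $\pi$-system and a $\lambda$-system is a $\sigma$-algebra. The paper does not prove this statement at all --- it recalls it as standard background, citing Cohn and Dudley --- and your argument is exactly the textbook proof found there, with the only caveat (which you already flag) being to fix one axiomatization of $\lambda$-system and use it consistently throughout.
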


The following approximation lemma follows from Theorem 0.1 and Theorem 0.7  in \cite{Wal82}.
\begin{lemma}\label{approximation}
Let $(X,\cB,\mu)$ be a probability space, $\cC$ be a semi-algebra which generates the $\sigma$-algebra $\cB$ and $\cA$ be the algebra generated by $\cC$. Then
\begin{itemize}
\item[(1)] $\cA=\cC_{\Sigma f}:=\{\bigcup_{i=1}^n C_i: C_1,\cdots,C_n\in\cC \text{ are disjoint, } n\in\N\}$;
\item[(2)] for each $\epsilon>0$ and each $B\in\cB$, there is some $A\in\cA$ with $\mu(A\triangle B)<\epsilon$.
\end{itemize}
\end{lemma}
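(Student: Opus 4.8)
The plan is to prove the two parts separately: (1) is a purely set-theoretic computation with the semi-algebra axioms, and (2) is an application of the monotone class theorem (Theorem~\ref{monotone class theorem}) that crucially exploits the finiteness of $\mu$.

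For part (1), recall that a semi-algebra $\cC$ satisfies $\emptyset\in\cC$, is closed under finite intersections, and has the property that $C^c$ is a finite disjoint union of members of $\cC$ for every $C\in\cC$. Since any algebra containing $\cC$ is closed under finite unions, we have $\cC\subseteq\cC_{\Sigma f}\subseteq\cA$, so it suffices to show that $\cC_{\Sigma f}$ is itself an algebra, which then forces $\cA\subseteq\cC_{\Sigma f}$. First, $X=\emptyset^c$ is a finite disjoint union of members of $\cC$, so $X\in\cC_{\Sigma f}$. Closure under finite intersections: if $A=\bigcup_{i=1}^nC_i$ and $B=\bigcup_{j=1}^mD_j$ are disjoint unions with $C_i,D_j\in\cC$, then $A\cap B=\bigcup_{i,j}(C_i\cap D_j)$, and the sets $C_i\cap D_j$ are pairwise disjoint members of $\cC$. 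Closure under complementation: if $A=\bigcup_{i=1}^nC_i$ then $A^c=\bigcap_{i=1}^nC_i^c$, each $C_i^c\in\cC_{\Sigma f}$ by the semi-algebra axiom, and we have just shown $\cC_{\Sigma f}$ is closed under finite intersections, so $A^c\in\cC_{\Sigma f}$. Closure under finite unions now follows from De Morgan, or directly by writing $A\cup B=A\cup(B\cap A^c)$ and noting that the pieces of $B\cap A^c\in\cC_{\Sigma f}$ all lie in $A^c$, hence are disjoint from the pieces of $A$. This proves (1).

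For part (2), set $\cG:=\{B\in\cB:\text{for every }\epsilon>0\text{ there is }A\in\cA\text{ with }\mu(A\triangle B)<\epsilon\}$. Trivially $\cA\subseteq\cG\subseteq\cB=\sigma(\cA)$, so it is enough to show $\cG$ is a $\sigma$-algebra (equivalently, since it contains the algebra $\cA$, a monotone class, by Theorem~\ref{monotone class theorem}). Closure under complementation is immediate from $A^c\triangle B^c=A\triangle B$ and $A^c\in\cA$. For countable unions, let $B_n\in\cG$, put $B=\bigcup_nB_n$, and fix $\epsilon>0$. Since $\mu$ is a probability measure, continuity from below gives $N$ with $\mu\bigl(B\setminus\bigcup_{n\le N}B_n\bigr)<\epsilon/2$. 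Choosing $A_n\in\cA$ with $\mu(A_n\triangle B_n)<\epsilon/(2N)$ for $n\le N$ and setting $A:=\bigcup_{n\le N}A_n\in\cA$, the inclusion $\bigl(\bigcup_{n\le N}A_n\bigr)\triangle\bigl(\bigcup_{n\le N}B_n\bigr)\subseteq\bigcup_{n\le N}(A_n\triangle B_n)$ yields $\mu\bigl(A\triangle\bigcup_{n\le N}B_n\bigr)<\epsilon/2$, and the triangle inequality for $\triangle$ gives $\mu(A\triangle B)<\epsilon$. Hence $\cG$ is a $\sigma$-algebra containing $\cA$, so $\cG=\sigma(\cA)=\cB$, which is (2). (Alternatively one could run this through Dynkin's $\pi$--$\lambda$ theorem, Theorem~\ref{dynkin theorem}, with $\cA$ as the $\pi$-system, but the monotone-class route is cleanest since $\cG$ already contains the algebra.)

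The \emph{main obstacle} is essentially nonexistent: both statements are standard. The only genuine point of care is the use of the finiteness of $\mu$ in part (2) — it is exactly what allows one to truncate the infinite union $\bigcup_nB_n$ to a finite subunion before approximating each piece, and the assertion fails for infinite measures. In part (1) one simply has to keep track of which semi-algebra axiom is being invoked at each step.
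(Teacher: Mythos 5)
Your proof is correct, but it is worth noting that the paper does not actually prove this lemma at all: it is stated as a consequence of Theorems 0.1 and 0.7 of Walters' book \cite{Wal82}, so your write-up is a self-contained replacement for a citation rather than an alternative to an argument in the text. What you supply is essentially the standard textbook proof that the citation points to: for (1) you verify directly that $\cC_{\Sigma f}$ is an algebra containing $\cC$ (contains $X=\emptyset^c$, is closed under finite intersections because $\cC$ is, and under complements because $C^c$ is a finite disjoint union of members of $\cC$), which pins it between $\cC$ and $\cA$; for (2) you show that the family $\cG$ of sets approximable in measure by elements of $\cA$ is a $\sigma$-algebra containing $\cA$, the only nontrivial point being the truncation of a countable union to a finite one, which uses the finiteness of $\mu$ exactly as you say. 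Both steps are sound; the inclusion $\bigl(\bigcup_{n\le N}A_n\bigr)\triangle\bigl(\bigcup_{n\le N}B_n\bigr)\subseteq\bigcup_{n\le N}(A_n\triangle B_n)$ and the identity $A^c\triangle B^c=A\triangle B$ are used correctly, and your observation that one could instead verify that $\cG$ is a monotone class and invoke Theorem \ref{monotone class theorem} is also valid (the parenthetical ``equivalently'' is loose phrasing, since being a monotone class is weaker than being a $\sigma$-algebra, but either property of $\cG$ suffices here because $\cG\supseteq\cA$). In short: no gap, and your argument makes explicit what the paper delegates to \cite{Wal82}.
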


\section{Digit frequency parameters}\label{sec:digit}

Let $1<\beta\le2$. Write
$$\cN_0(w) := \{k\ge 0 :w_{k+1} = 0 \text{ and } w_1w_2\dots w_k 1 \text{ is admissible}\}\quad \text{ for any } w\in\Sigma_\beta,$$
$$\cN_0(w) := \{0 \le k < |w| :w_{k+1} = 0 \text{ and } w_1w_2\dots w_k 1 \text{ is admissible}\}\quad \text{ for any } w\in\Sigma_\beta^*,$$
$$\cN_1(w) := \{k\ge1 : w_{k} = 1  \}\quad\text{ for any } w\in\Sigma_\beta,$$
$$\cN_1(w) := \{1 \le k \le |w| : w_{k} = 1\}\quad\text{ for any } w\in\Sigma_\beta^*$$
and let
$$N_0(w) := \sharp \cN_0(w),\quad N_1(w) := \sharp \cN_1(w)\quad\text{ for any } w\in\Sigma_\beta^*\text{ or }\Sigma_\beta,$$
$$N_0(x,n):=N_0(\epsilon(x,\beta)|_n),\quad N_1(x,n):=N_1(\epsilon(x,\beta)|_n)\quad \text{ for any } x\in[0,1)$$
where $\sharp\cN$ means the cardinality of the set $\cN$.

\begin{remark}Noting that $N_1(w)$ is just the number of the digit $1$ appearing in $w$, it is immediate from the definition that if $w,w' \in \Sigma_\beta^*$ such that $ww' \in \Sigma_\beta^*$, then
$$N_1(ww')=N_1(w)+N_1(w').$$
\end{remark}

Denote the first position where $w$ and $\epsilon^*(1,\beta)$ are different by
$$\mathfrak{m}(w):=\min\{k\ge1:w_k<\epsilon_k^*\} \quad\text{ for } w\in\Sigma_\beta$$
$$\text{and}\quad\mathfrak{m}(w):=\mathfrak{m}(w0^\infty) \quad\text{ for } w\in\Sigma_\beta^*.$$
For any $w\in\Sigma_\beta$, combing the facts $w\prec\epsilon^*(1,\beta)$, $\epsilon^*(1,\beta)|_n\in\Sigma_\beta^*,\forall n\in\N$ and Lemma \ref{admfull}, we know that there exists $k\in\N$ such that $w|_k$ is full. Therefore we can write
$$\tau(w):=\min\{k\ge1:w|_k \text{ is full}\} \quad\text{ for any } w\in\Sigma_\beta,$$
$$\text{and}\quad\tau(w):=\tau(w0^\infty) \quad\text{ for any } w\in\Sigma_\beta^*.$$
For any $w\in\Sigma_\beta^*$, regarding $w|_0$ as the empty word which is full, we write
$$\tau'(w):=\max\{0\le k\le|w|:w|_k\text{ is full}\}.$$

\begin{lemma}\label{tau}
Let $\beta>1$. For any $w\in\Sigma_\beta\cup\Sigma_\beta^*$, we have
$$\tau(w)=\left\{\begin{array}{ll}
\mathfrak{m}(w) & \mbox{if } \epsilon(1,\beta) \mbox{ is infinite};\\
\min\{\mathfrak{m}(w),M\} & \mbox{if } \epsilon(1,\beta)\mbox{ is finite with length }M.
\end{array}\right.$$
\end{lemma}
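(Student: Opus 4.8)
The plan is to reduce to the case $w\in\Sigma_\beta$ and then to locate $\tau(w)$ by determining exactly which prefixes $w|_k$ are full. First I would note that for $w\in\Sigma_\beta^*$ one has $w0^\infty\in\Sigma_\beta$ and, by definition, $\tau(w)=\tau(w0^\infty)$ and $\mathfrak{m}(w)=\mathfrak{m}(w0^\infty)$, so nothing is lost by assuming $w\in\Sigma_\beta$. Fix such $w$, abbreviate $\epsilon_i=\epsilon_i(1,\beta)$ and $\epsilon_i^*=\epsilon_i^*(1,\beta)$, put $m:=\mathfrak{m}(w)$, and let $\tau_0$ denote the asserted value of $\tau(w)$ (so $\tau_0=m$ if $\epsilon(1,\beta)$ is infinite and $\tau_0=\min\{m,M\}$ if it is finite of length $M$). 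Parry's criterion (Lemma \ref{charADM}) gives $w\prec\epsilon^*(1,\beta)$, and since $m$ is by definition the first index with $w_m<\epsilon_m^*$, it is exactly the first index where $w$ and $\epsilon^*(1,\beta)$ disagree; hence $w_i=\epsilon_i^*$ for $1\le i<m$ and $w_m<\epsilon_m^*$. Recall also that $\epsilon_i^*=\epsilon_i$ for all $i$ in the infinite case, and for $i\le M-1$ in the finite case. I would then prove (a) $w|_{\tau_0}$ is full, and (b) $w|_k$ is not full for every $1\le k<\tau_0$; since $\tau(w)=\min\{k\ge1:w|_k\text{ is full}\}$, these together force $\tau(w)=\tau_0$.

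Part (b) is the easy half. For $1\le k<\tau_0$ we have $k<m$, and in the finite case also $k<\tau_0\le M$, i.e.\ $k\le M-1$; in either regime $\epsilon_i^*=\epsilon_i$ for $i\le k$, so $w|_k=\epsilon_1^*\cdots\epsilon_k^*=\epsilon_1\cdots\epsilon_k=\epsilon(1,\beta)|_k$. This word is admissible (it equals $w|_k\in\Sigma_\beta^k$) and is a truncation of $\epsilon(1,\beta)$, hence not full by Proposition \ref{exaDIV}(1). Thus $\tau(w)\ge\tau_0$.

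For part (a) I would split according to whether $\tau_0=M<m$ or $\tau_0=m$. In the first case ($\epsilon(1,\beta)$ finite of length $M$, $m>M$) we get $w_i=\epsilon_i^*$ for all $i\le M$, so $w|_M=\epsilon^*(1,\beta)|_M$, which is full by Proposition \ref{exaDIV}(2) because $M\mid M$. In the second case $\tau_0=m$; in the finite case this forces $m\le M$, and then $\epsilon_i^*=\epsilon_i$ for $i<m$ (automatic when $\epsilon(1,\beta)$ is infinite), so $w|_m=\epsilon_1\cdots\epsilon_{m-1}w_m$ with $0\le w_m<\epsilon_m^*$. Now the word $\epsilon^*(1,\beta)|_m=\epsilon_1\cdots\epsilon_{m-1}\epsilon_m^*$ belongs to $\Sigma_\beta^m$ and has nonzero last digit $\epsilon_m^*$ (since $\epsilon_m^*>w_m\ge0$); applying Lemma \ref{admfull} to this word with replacement digit $w_m<\epsilon_m^*$ yields that $\epsilon_1\cdots\epsilon_{m-1}w_m=w|_m$ is full. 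Hence $\tau(w)\le\tau_0$, and combining with (b) we conclude $\tau(w)=\tau_0$.

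The step that needs the most care is (a): one has to notice that the identities $w_i=\epsilon_i^*=\epsilon_i$ for $i<m$ hold precisely because $\tau_0=m$ entails $m\le M$ in the finite case, so that $\epsilon^*(1,\beta)|_m$ is \emph{literally} the word $\epsilon_1\cdots\epsilon_{m-1}\epsilon_m^*$ and can play the role of the ``parent'' full word in Lemma \ref{admfull}; and dually that when $m>M$ the prefix $w|_M$ coincides with $\epsilon^*(1,\beta)|_M$, covered by Proposition \ref{exaDIV}(2). One could alternatively prove (a) via Proposition \ref{tail-non-full}, by checking that $w|_{\tau_0}$ does not end with any nonempty prefix of $\epsilon(1,\beta)$, but that route needs the classical self-admissibility $\sigma^k\epsilon^*(1,\beta)\preceq\epsilon^*(1,\beta)$ for $k\ge1$ and is more cumbersome; leaning instead on Lemma \ref{admfull} together with the already-noted fact $\epsilon^*(1,\beta)|_n\in\Sigma_\beta^*$ is the cleaner path.
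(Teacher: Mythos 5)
Your proof is correct and follows essentially the same route as the paper: the full prefix at position $\tau_0$ is obtained from $\epsilon^*(1,\beta)|_{\mathfrak m(w)}\in\Sigma_\beta^*$ via Lemma \ref{admfull} (or from Proposition \ref{exaDIV}(2) when $\mathfrak m(w)>M$), while the shorter prefixes coincide with truncations of $\epsilon(1,\beta)$ and are non-full by Proposition \ref{exaDIV}(1). The only difference is organizational (you apply Lemma \ref{admfull} only when $\tau_0=\mathfrak m(w)$, whereas the paper applies it uniformly before splitting into cases), which does not change the argument.
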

\begin{proof}
For any $w\in\Sigma_\beta\cup\Sigma_\beta^*$. Let $k=\mathfrak{m}(w).$ Then $w|_k=\epsilon^*_1\cdots\epsilon^*_{k-1}w_k$ and $w_k<\epsilon^*_k$. (When $w\in\Sigma_\beta^*$ and $k>|w|$, we regard $w|_k=w_1\cdots w_k$ as $w_1\cdots w_{|w|}0^{k-|w|}$). By $\epsilon^*_1\cdots\epsilon^*_{k-1}\epsilon^*_k\in\Sigma^*_\beta$ and Lemma \ref{admfull}, $w|_k$ is full.
\item(1) When $\epsilon(1,\beta)$ is infinite, for any $1\le i\le k-1$, we have $w|_i=\epsilon^*(1,\beta)|_i=\epsilon(1,\beta)|_i$ which is not full by Proposition \ref{exaDIV}. Therefore $\tau(w)=k=\mathfrak{m}(w)$.
\item(2) when $\epsilon(1,\beta)=\epsilon_1\cdots\epsilon_M0^\infty$ with $\epsilon_M\neq0$:
\newline \textcircled{\footnotesize{$1$}} If $k\le M$, then for any $1\le i\le k-1<M$, we have $w|_i=\epsilon^*(1,\beta)|_i$ which is not full by Proposition \ref{exaDIV}. Therefore $\tau(w)=k=\mathfrak{m}(w)$.
\newline \textcircled{\footnotesize{$2$}} If $k>M$, then $w|_M=\epsilon^*(1,\beta)|_M$ is full by Proposition \ref{exaDIV}. For any $1\le i\le M-1$, we have $w|_i=\epsilon^*(1,\beta)|_i$ which is not full by Proposition \ref{exaDIV}. Therefore $\tau(w)=M$.
\end{proof}

\begin{lemma}\label{infinitefullwords} Let $\beta>1$ and $w\in\Sigma_\beta$. Then
\begin{itemize}
\item[\emph{(1)}] there exists a strictly increasing sequence $(n_j)_{j \geq 1} $ such that $w|_{n_j}$ is full for any $j \in \N$;
\item[\emph{(2)}] $N_0(w)=+\infty$ if $1<\beta\le2$.
\end{itemize}
\end{lemma}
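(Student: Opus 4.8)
The plan is to derive both statements from the structural results on full words established earlier, chiefly Proposition \ref{exaDIV}, Proposition \ref{tail-non-full}, and the characterisation of $\tau$ in Lemma \ref{tau}. For part (1), fix $w \in \Sigma_\beta$. I would argue that for every $N \in \N$ there exists $n \geq N$ with $w|_n$ full; iterating this gives the desired strictly increasing sequence $(n_j)$. To see this, consider the tail $\sigma^N(w) = w_{N+1}w_{N+2}\cdots$, which again lies in $\Sigma_\beta$ by the Remark after the admissibility definition. Since $\sigma^N(w) \prec \epsilon^*(1,\beta)$ (Parry's criterion, Lemma \ref{charADM}), there is a first index $k \geq 1$ with $w_{N+k} < \epsilon_k^*$, i.e. $\mathfrak{m}(\sigma^N(w)) = k < \infty$; by Lemma \ref{tau} the prefix $(\sigma^N w)|_{\tau(\sigma^N w)}$ is full, where $\tau(\sigma^N w) \leq \mathfrak{m}(\sigma^N w)$. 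Writing $w|_N = u$ and noting $w|_{N+\tau(\sigma^N w)} = u \cdot (\sigma^N w)|_{\tau(\sigma^N w)}$, I would invoke the implication "$w_1\cdots w_{n-1}w_n'$ is full for $w_n' < w_n$" style reasoning, or more directly Lemma \ref{admfull} together with Lemma \ref{tau} applied to $w$ itself after position $N$: actually the cleanest route is to observe that $w_{N+1}\cdots w_{N+k-1}w_{N+k} = \epsilon_1^*\cdots\epsilon_{k-1}^* w_{N+k}$ with $w_{N+k} < \epsilon_k^*$, so by Lemma \ref{admfull} the block $w_{N+1}\cdots w_{N+k}$, read as a word, is full; then concatenation of a full word after the order-$N$ prefix — hmm, but $u$ need not be full.

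Let me instead run the argument on $w$ directly without shifting, which is simpler: by Lemma \ref{tau}, $w|_{\tau(w)}$ is full, and this gives one full prefix; I then want infinitely many. The trick: if $w|_n$ is full then consider $w$ again and apply the same reasoning to the admissible sequence $\sigma^n(w)$, obtaining an index $k$ with $(\sigma^n w)|_k$ full; by Proposition \ref{fullfullfull}(1) (concatenation of full words is full) the prefix $w|_{n+k} = w|_n \cdot (\sigma^n w)|_k$ is full. Starting from $n_1 = \tau(w)$ and iterating yields a strictly increasing sequence $(n_j)$ of full prefixes. The one point requiring care is that $(\sigma^n w)|_k$ is genuinely full as a \emph{word} (not merely that $w|_{n+k}$ is full), which is exactly what Lemma \ref{tau} combined with Lemma \ref{charADM} delivers, since $\sigma^n(w) \in \Sigma_\beta$.

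For part (2), assume $1 < \beta \leq 2$, so $\cA_\beta = \{0,1\}$ and by Proposition \ref{fullfullfull}(3) every full word over this alphabet ends in the digit $0$ (when $\beta < 2$; for $\beta = 2$ one checks directly that $\Sigma_\beta = \{0,1\}^\N$ and zeros occur infinitely often, or excludes this trivial case). Take the sequence $(n_j)$ from part (1). For each $j$, the word $w|_{n_j}$ is full, hence $w_{n_j} = 0$, and since $w|_{n_j}$ is full, $w|_{n_j - 1} 1$ is admissible by Proposition \ref{charFULL}(4) (appending any admissible word, in particular the word $1$, to a full word keeps admissibility). Therefore $n_j - 1 \in \cN_0(w)$. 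As the $n_j$ are strictly increasing, this produces infinitely many distinct elements of $\cN_0(w)$, so $N_0(w) = \sharp\,\cN_0(w) = +\infty$.

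The main obstacle is purely bookkeeping in part (1): making sure the iteration "full prefix $\Rightarrow$ next full prefix" is justified by the right combination of Lemma \ref{tau} (to locate a full prefix of the shifted sequence) and Proposition \ref{fullfullfull}(1) (to glue it onto the existing full prefix), rather than accidentally only concluding that $w|_{n+k}$ is full without knowing the inserted block is a full word. Once that chain is set up correctly, part (2) is an immediate corollary via the "full words over $\{0,1\}$ end in $0$" fact and the admissibility-of-$w|_{n-1}1$ characterisation.
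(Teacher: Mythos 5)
Your part (1) is correct and follows essentially the paper's route: locate a full prefix of the shifted tail $\sigma^{n}w$ (via $\mathfrak{m}$/Lemma \ref{tau} and Lemma \ref{admfull}) and glue it to the current full prefix with Proposition \ref{fullfullfull}(1).

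Part (2), however, contains a genuine gap. Your key step is: ``since $w|_{n_j}$ is full, $w|_{n_j-1}1$ is admissible by Proposition \ref{charFULL}(4).'' That proposition, applied to the full word $w|_{n_j}$, gives admissibility of $w|_{n_j}1$, not of $w|_{n_j-1}1$; and the implication you actually need --- that a full word ending in $0$ remains admissible after flipping its last digit to $1$ --- is false. For the golden ratio, the word $10$ is full (its cylinder has length $\beta^{-2}$, see Proposition \ref{charFULL}(2)), yet $11$ is not admissible. So knowing only that $w|_{n_j}$ is full (equivalently, only the statement of part (1)) does not produce elements of $\cN_0(w)$, and the argument breaks precisely at the point where membership $n_j-1\in\cN_0(w)$ is claimed. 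The paper avoids this by exploiting the \emph{construction} of $n_j$, not just its conclusion: with $k_j=\mathfrak{m}(\sigma^{n_{j-1}}w)$ the block $w_{n_{j-1}+1}\cdots w_{n_j-1}$ equals $\epsilon_1^*\cdots\epsilon_{k_j-1}^*$ and $\epsilon_{k_j}^*=1$ (here $w_{n_j}<\epsilon_{k_j}^*$ forces $w_{n_j}=0$ for all $1<\beta\le 2$, including $\beta=2$, with no separate case needed), so $w_1\cdots w_{n_j-1}1$ is a full word followed by the prefix $\epsilon_1^*\cdots\epsilon_{k_j}^*$ of $\epsilon^*(1,\beta)$, which is admissible by Proposition \ref{charFULL}(5). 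Note also that your ``cleanest'' version of part (1) uses $\tau(\sigma^n w)$ rather than $\mathfrak{m}(\sigma^n w)$; for part (2) this matters, since when $\epsilon(1,\beta)$ is finite and $\tau=M<\mathfrak{m}$ the block need not sit at a strict drop below $\epsilon^*$, and the charFULL(5) repair is then not available in the same form. (Your side remark that $\Sigma_2=\{0,1\}^\N$ is also inaccurate --- eventually-all-ones sequences are excluded --- but this is secondary; the uniform argument above covers $\beta=2$.)
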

\begin{proof}
\item(1) Let $k_1:=\mathfrak{m}(w)$, $n_1:=k_1$, $k_j:=\mathfrak{m}(\sigma^{n_{j-1}}w)$ and $n_j:=n_{j-1}+k_j$ for any $j\ge2$. Then $n_j$ is strictly increasing. By $\epsilon^*_1\cdots\epsilon^*_{k_1-1}\epsilon^*_{k_1}\in\Sigma^*_\beta$, $w_{n_1}<\epsilon^*_{k_1}$ and Lemma \ref{admfull}, we know that $w_1\cdots w_{n_1-1}w_{n_1}=\epsilon^*_1\cdots\epsilon^*_{k_1-1}w_{n_1}$ is full. Similarly for any $j\ge2$, by $\epsilon^*_1\cdots\epsilon^*_{k_j-1}\epsilon^*_{k_j}\in\Sigma^*_\beta$, $w_{n_j}<\epsilon^*_{k_j}$ and Lemma \ref{admfull}, we know that $w_{n_{j-1}+1}\cdots w_{n_j-1}w_{n_j}=\epsilon^*_1\cdots\epsilon^*_{k_j-1}w_{n_j}$ is full. Therefore, by Proposition \ref{fullfullfull} (1), $w|_{n_j}$ is full for any $j\in\N$.
\item(2) Noting that $1<\beta\le2$, by $w_{n_j}<\epsilon^*_{k_j}$, we get $w_{n_j}=0, \epsilon^*_{k_j}=1$ for any $j\in\N$. Thus
    $$w_1\cdots w_{n_j-1}1=\epsilon^*_1\cdots\epsilon^*_{k_1-1}w_{n_1}\cdots\cdots\epsilon^*_1\cdots\epsilon^*_{k_{j-1}-1}w_{n_{j-1}}\epsilon^*_1\cdots\epsilon^*_{k_j-1}\epsilon^*_{k_j}\in\Sigma^*_\beta$$
    for any $j\in\N$ by Proposition \ref{fullfullfull} (1) and Proposition \ref{charFULL} (5). Therefore $N_0(w)=+\infty$.
\end{proof}

\begin{lemma}\label{parameter}
Let $1<\beta\le2$, $w,w' \in \Sigma_\beta^*$ with $ww' \in \Sigma_\beta^*$. Then
\begin{itemize}
\item[\emph{(1)}] $N_0(w) \le N_0(ww') \le N_0(w) + N_0(w')$;
\item[\emph{(2)}] when $w$ is full, we have $N_0(ww') = N_0(w) + N_0(w')$;
\item[\emph{(3)}] when $\epsilon(1,\beta)=\epsilon_1\cdots\epsilon_M0^{\infty}$ with $\epsilon_M\neq 0$, we have $N_0(ww') \ge N_0(w) + N_0(w')-M$.
\end{itemize}
\end{lemma}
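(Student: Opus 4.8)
Write $n=|w|$ and $m=|w'|$. First I would describe $\cN_0(ww')$ by splitting the index range $\{0,\dots,n+m-1\}$ at $n$. For $0\le k<n$ one has $(ww')_{k+1}=w_{k+1}$ and $(ww')_1\cdots(ww')_k\,1=w_1\cdots w_k\,1$, so $k\in\cN_0(ww')$ iff $k\in\cN_0(w)$. For $n\le k<n+m$, writing $k=n+j$, membership says exactly that $w'_{j+1}=0$ and the word $w\,w'_1\cdots w'_j\,1$ is admissible. Putting
$$J:=\{\,0\le j<m:\ w'_{j+1}=0\ \text{ and }\ w\,w'_1\cdots w'_j\,1\in\Sigma_\beta^*\,\},$$
we get $N_0(ww')=N_0(w)+\sharp J$. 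Every suffix of an admissible word is admissible (immediate from Parry's criterion, Lemma~\ref{charADM}), so $w\,w'_1\cdots w'_j\,1\in\Sigma_\beta^*$ forces $w'_1\cdots w'_j\,1\in\Sigma_\beta^*$; hence $J\subseteq\cN_0(w')$ and $0\le\sharp J\le N_0(w')$, which is precisely (1). If moreover $w$ is full, then $wv$ is admissible for every $v\in\Sigma_\beta^*$ by Proposition~\ref{charFULL}(4), so this time $w'_1\cdots w'_j\,1\in\Sigma_\beta^*$ implies $w\,w'_1\cdots w'_j\,1\in\Sigma_\beta^*$; thus $\cN_0(w')\subseteq J$, so $J=\cN_0(w')$ and $N_0(ww')=N_0(w)+N_0(w')$, giving (2).

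For (3) it remains to show $\sharp\big(\cN_0(w')\setminus J\big)\le M$. Fix $j\in\cN_0(w')\setminus J$: then $w'_1\cdots w'_j\,1$ is admissible but $w\,w'_1\cdots w'_j\,1$ is not. The first (and main) step is to localise this obstruction. Since $w'_1\cdots w'_j\,1\in\Sigma_\beta^*$ its extension $w'_1\cdots w'_j\,10^\infty$ lies in $\Sigma_\beta$, so in Parry's criterion any violating shift of $w\,w'_1\cdots w'_j\,10^\infty$ must start inside $w$; comparing with $w0^\infty\in\Sigma_\beta$ (all of whose shifts are $\prec\epsilon^*(1,\beta)$), one sees that such a shift can only violate by first agreeing with a prefix $\epsilon_1^*\cdots\epsilon_r^*$ of $\epsilon^*(1,\beta)$ along the tail of $w$ and then exceeding $\epsilon^*(1,\beta)$ inside $w'_1\cdots w'_j\,1$. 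Thus there is $r\ge1$ with $w_{n-r+1}\cdots w_n=\epsilon_1^*\cdots\epsilon_r^*$ and $w'_1\cdots w'_j\,10^\infty\succ\sigma^r(\epsilon^*(1,\beta))$. Let $q$ be the first coordinate at which this strict inequality occurs. The coordinate $q$ must carry the digit $1$ in $w'_1\cdots w'_j\,10^\infty$ (recall $1<\beta\le2$, so all digits are $0$ or $1$), hence $q\le j+1$; and $q\le j$ is impossible, because then $w_{n-r+1}\cdots w_n=\epsilon_1^*\cdots\epsilon_r^*$ together with $w'_1\cdots w'_{q-1}=\epsilon_{r+1}^*\cdots\epsilon_{r+q-1}^*$ and $w'_q=1>0=\epsilon_{r+q}^*$ would make $\sigma^{\,n-r}\big((ww')0^\infty\big)\succ\epsilon^*(1,\beta)$, contradicting $ww'\in\Sigma_\beta^*$. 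So $q=j+1$, which forces $w'_1\cdots w'_j=\epsilon_{r+1}^*\cdots\epsilon_{r+j}^*$ and $\epsilon_{r+j+1}^*=0$.

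It remains to conclude $j\le M-1$, so that $\cN_0(w')\setminus J\subseteq\{0,1,\dots,M-1\}$ and (3) follows. Here one uses that in the present case $\epsilon^*(1,\beta)=(\epsilon_1\cdots\epsilon_{M-1}(\epsilon_M-1))^\infty$ is purely periodic with period $M$, so $\epsilon_a^*=\epsilon_b^*$ whenever $a\equiv b\pmod M$ and $a,b\ge1$; and a short computation with Parry's criterion shows that, for an admissible word $u$ over $\{0,1\}$, the word $u\,1$ is admissible if and only if $u$ ends with no block $\epsilon_1^*\cdots\epsilon_s^*$ ($s\ge1$) such that $\epsilon_{s+1}^*=0$. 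Now assume $j\ge M$, and let $s_*\in\{1,\dots,M\}$ be congruent to $r+j$ mod $M$; then $s_*\le j$, and by periodicity the length-$s_*$ suffix of $w'_1\cdots w'_j=\epsilon_{r+1}^*\cdots\epsilon_{r+j}^*$ equals $\epsilon_1^*\cdots\epsilon_{s_*}^*$. Since $w'_1\cdots w'_j\,1$ is admissible, the displayed criterion gives $\epsilon_{s_*+1}^*=1$; but $s_*+1\equiv r+j+1\pmod M$, so $\epsilon_{s_*+1}^*=\epsilon_{r+j+1}^*=0$, a contradiction. Hence $j\le M-1$. I expect the localisation in the second paragraph to be the crux — extracting the overlap length $r$ and pinning the first violating coordinate at $j+1$ via the hypothesis $ww'\in\Sigma_\beta^*$ — since that is where all the hypotheses genuinely interact; the periodicity computation at the end is then routine.
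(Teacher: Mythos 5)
Your proof is correct. Parts (1) and (2) follow essentially the paper's own argument: split $\cN_0(ww')$ at $|w|$, use that suffixes of admissible words are admissible for the upper bound in (1), and Proposition \ref{charFULL} (4) when $w$ is full for (2). For part (3), however, you take a genuinely different route. The paper never examines individual ``lost'' indices: it decomposes $ww'=u^{(1)}u^{(2)}u^{(3)}$, where $u^{(1)}=w|_{\tau'(w)}$ is the longest full prefix of $w$ and $u^{(2)}$ is the shortest block crossing the junction that ends full (with a separate edge case when no such block exists inside $ww'$), then applies (1)--(2) blockwise and bounds the overlap $N_0(w'_1\cdots w'_{k_0+k_1-a})$ by $M$ via Lemma \ref{tau}, which gives $\tau\le M$ when $\epsilon(1,\beta)$ is finite. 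You instead write $N_0(ww')=N_0(w)+\sharp J$ with $J\subset\cN_0(w')$ and bound $\sharp\big(\cN_0(w')\setminus J\big)\le M$ directly: your Parry-criterion localization shows that a lost index $j$ forces $w$ to end in $\epsilon_1^*\cdots\epsilon_r^*$ with $w'_1\cdots w'_j=\epsilon^*_{r+1}\cdots\epsilon^*_{r+j}$ and $\epsilon^*_{r+j+1}=0$, and the period-$M$ structure of $\epsilon^*(1,\beta)$ in the finite case then excludes $j\ge M$; I checked the localization (including that the violating shift must start inside $w$, that $q\le j$ contradicts $ww'\in\Sigma_\beta^*$, and that strictness of $w'_1\cdots w'_j10^\infty\succ\sigma^r\epsilon^*(1,\beta)$ holds because $\epsilon^*(1,\beta)$ is not eventually $0$) and it is sound. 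The trade-off: the paper's proof reuses its full-word machinery ($\tau$, $\tau'$, Propositions \ref{charFULL} and \ref{fullfullfull}) and avoids lexicographic computations, while yours is more hands-on with Parry's criterion and yields slightly finer information, namely that every lost index lies in $\{0,\dots,M-1\}$, at the cost of invoking (tersely, but legitimately) the finite-word form of Parry's criterion and the admissibility criterion for $u1$, of which you only actually need the easy direction.
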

\begin{proof}
Let $a=|w|, b=|w'|$ and then $ww'=w_1\cdots w_a w'_1\cdots w'_b$.
\item(1) \textcircled{\footnotesize{$1$}} $N_0(w)\le N_0(ww')$ follows from $\cN_0(w)\subset\cN_0(ww')$.
\newline\textcircled{\footnotesize{$2$}} Prove $N_0(ww') \le N_0(w) + N_0(w')$.
\begin{itemize}
\item[i)] We prove $\cN_0(ww')\subset\cN_0(w)\cup(\cN_0(w')+a)$ first. Let $k\in\cN_0(ww')$.
  \newline If $0\le k<a$, then $w_{k+1}=0$ and $w_1\cdots w_k 1\in\Sigma_\beta^*$. We get $k\in\cN_0(w)$.
  \newline If $a\le k<a+b$, then $w'_{k-a+1}=0$ and $w_1\cdots w_a w'_1\cdots w'_{k-a} 1\in\Sigma_\beta^*$. It follows from $w'_1\cdots w'_{k-a}1\in\Sigma_\beta^*$ that $k-a\in\cN_0(w')$ and $k\in\cN_0(w')+a$.
\item[ii)] Combining $\cN_0(w)\cap(\cN_0(w')+a)=\emptyset$, $\sharp(\cN_0(w')+a)=\sharp\cN_0(w')$ and i), we get $N_0(ww')\le N_0(w)+N_0(w')$.
\end{itemize}
\item(2) We need to prove $N_0(ww')\ge N_0(w)+N_0(w')$. By $\sharp\cN_0(w')=\sharp(\cN_0(w')+a)$, it suffices to prove $\cN_0(ww')\supset\cN_0(w)\cup(\cN_0(w')+a)$. For each $k\in\cN_0(w)$, obviously $k\in\cN_0(ww')$. On the other hand, if $k\in(\cN_0(w')+a)$, then $k-a\in\cN_0(w')$, $w'_{k-a+1}=0$ and $w'_1\cdots w'_{k-a}1\in\Sigma_\beta^*$. Since $w$ is full, by Proposition \ref{charFULL}, we get $ww'_1\cdots w'_{k-a}1\in\Sigma_\beta^*$ and then $k\in\cN_0(ww')$.
\item(3) \textcircled{\footnotesize{$1$}} Firstly, we divide $ww'$ into three segments.
\begin{itemize}
\item[i)] Let $k_0:=\tau'(w)$, then $0\le k_0\le a$. If $k_0=a$, $w$ is full. Then the conclusion follows from (2) immediately. Therefore we assumes $0\le k_0<a$ in the following proof. Let $u^{(1)}:=w_1\cdots w_{k_0}$ be full and $|u^{(1)}|=k_0$. (When $k_0=0$, we regard $u^{(1)}$ as the empty word and $N_0(u^{(1)}):=0$.)
\item[ii)] Consider $w_{k_0+1}\cdots w_a w'_1\cdots w'_b\in\Sigma_\beta^*$ (the admissibility follows from $ww'\in\Sigma_\beta^*$). \newline Let $k_1:=\tau(w_{k_0+1}\cdots w_a w'_1\cdots w'_b)\ge1$. By the definition of $k_0=\tau'(w)$ and Proposition \ref{fullfullfull}, we get $k_1>a-k_0$. In the following, we assume $k_1\le a-k_0+b$ first. The case $k_1>a-k_0+b$ will be considered at the end of the proof. Let $u^{(2)}:=w_{k_0+1}\cdots w_a w'_1\cdots w'_{k_0+k_1-a}$, then $|u^{(2)}|=k_1$.
\item[iii)] Let $u^{(3)}:=w'_{k_0+k_1-a+1}\cdots w'_b$. (When $k_0+k_1-a=b$, we regard $u^{(3)}$ as the empty word and $N_0(u^{(3)}):=0$.)
\end{itemize}
\indent Up to now, we write $ww'=u^{(1)}u^{(2)}u^{(3)}$.
$$\underbrace{w_1\cdots w_{k_0}}_{|u^{(1)}|=k_0}\underbrace{w_{k_0+1}\cdots w_a w'_1 \cdots w'_{k_0+k_1-a}}_{|u^{(2)}|=k_1}\underbrace{w'_{k_0+k_1-a+1}\cdots w'_b}_{|u^{(3)}|}$$
\newline\textcircled{\footnotesize{$2$}} Estimate $N_0(ww'), N_0(w)$ and $N_0(w')$.
\begin{itemize}
\item[i)] $N_0(ww')=N_0(u^{(1)}u^{(2)}u^{(3)})\xlongequal[\text{by (2)}]{u^{(1)}\text{ full}}N_0(u^{(1)})+N_0(u^{(2)}u^{(3)})\xlongequal[\text{by (2)}]{u^{(2)}\text{ full}}N_0(u^{(1)})+N_0(u^{(2)})+N_0(u^{(3)})$.
\item[ii)] $N_0(w)\xlongequal[\text{by (2)}]{u^{(1)}\text{ full}}N_0(u^{(1)})+N_0(w_{k_0+1}\cdots w_a)\overset{\text{by (1)}}{\le}N_0(u^{(1)})+N_0(u^{(2)})$.
\item[iii)] $N_0(w')\overset{\text{by (1)}}{\le}N_0(w'_1\cdots w'_{k_0+k_1-a})+N_0(u^{(3)})\le M+N_0(u^{(3)})$ where the last inequality follows from
    $$N_0(w'_1\cdots w'_{k_0+k_1-a})\le k_0+k_1-a\le k_1=\tau(w_{k_0+1}\cdots w_a w'_1\cdots w'_b)\overset{\text{by Lemma \ref{tau}}}{\le} M$$
\end{itemize}
Combining i), ii) and iii), we get $N_0(ww')\ge N_0(w)+N_0(w')-M$.
\newline
\newline\indent To end the proof, it suffices to consider the case $k_1>a-k_0+b$ below. We define $u^{(1)}$ as before and define $u^{(2)}:=w_{k_0+1}\cdots w_a w'_1\cdots w'_b$ which is not full. Then $|u^{(2)}|=a-k_0+b$. We do not define $u^{(3)}$.
\newline\textcircled{\footnotesize{$1$}} Prove $N_0(u^{(2)})=0$.
\newline By contradiction, we suppose $N_0(u^{(2)})\neq0$, then there exists $k\in\cN_0(u^{(2)})$, $0\le k<a-k_0+b$ such that $u^{(2)}_{k+1}=0$ and $u^{(2)}_1\cdots u^{(2)}_k 1\in\Sigma_\beta^*$. By Lemma \ref{admfull}, $u^{(2)}_1\cdots u^{(2)}_{k+1}$ is full which contradict $\tau(u^{(2)})=k_1>a-k_0+b$.
\newline\textcircled{\footnotesize{$2$}} Estimate $N_0(ww'), N_0(w)$ and $N_0(w')$.
\begin{itemize}
\item[i)] $N_0(ww')=N_0(u^{(1)}u^{(2)})\xlongequal[\text{by (2)}]{u^{(1)}\text{ full}}N_0(u^{(1)})+N_0(u^{(2)})\xlongequal[]{\text{by }\textcircled{\footnotesize{$1$}}}N_0(u^{(1)})$.
\item[ii)] $N_0(w)\xlongequal[\text{by (2)}]{u^{(1)}\text{ full}}N_0(u^{(1)})+N_0(w_{k_0+1}\cdots w_a)=N_0(u^{(1)})$ where the last equality follows from $N_0(w_{k_0+1}\cdots w_a)\le N_0(u^{(2)})=0$.
\item[iii)] $N_0(w')\le b\le|u^{(2)}|=a-k_0+b<k_1=\tau(u^{(2)})\overset{\text{by Lemma \ref{tau}}}{\le} M$.
\end{itemize}
Combining i), ii) and iii), we get $N_0(ww')\ge N_0(w)+N_0(w')-M$.
\end{proof}

\section{Dynamical properties of the random walk on $\Sigma_\beta^*$}\label{sec:rw}

Recall that the random walk $(X_n)$ in $\Sigma_\beta^*$ defines a probability distribution $\mu_p$ supported on the subshift $\Sigma_\beta$ by setting
$$\mu_p[w] := \P(X_{|w|} = w)$$
for all $w \in \Sigma^*$ and cylinder $[w]$, which then satisfies
$$\mu_p[0] = p, \quad \mu_p[1] = 1-p,$$
\and if $w1 \notin \cW$, we have
$$\mu_p[w0] = p\mu_p[w] \quad \text{and} \quad \mu_p[w1] = (1-p)\mu_p[w].$$
If $w1 \in \cW$, we have
$$\mu_p[w0] = \mu_p[w].$$
Then $\mu_p$ defines a natural probability measure $\nu_p = \pi_\beta \mu_p$ on $[0,1]$ under the natural projection
$$\pi(w) = \sum_{j=1}^\infty w_j \beta^{-j}.$$

\begin{remark}\label{measureformula} (1) By the definition of $\mu_p$ and $\nu_p$, we have
\begin{eqnarray*}
\nu_p(I(w)) = \mu_p[w] &=& p^{N_0(w)} (1-p)^{N_1(w)}\quad \text{for any } w \in \Sigma^*_\beta;\\
\nu_p(I(w|_n)) = \mu_p[w|_n] &=& p^{N_0(w|_n)} (1-p)^{N_1(w|_n)}\quad \text{for any } w \in \Sigma_\beta,n\in\N;\\
\nu_p(I_n(x))=\mu_p[\epsilon(x,\beta)|_n]&=&p^{N_0(x,n)}(1-p)^{N_1(x,n)}\quad\text{for any } x\in[0,1),n\in\N.
\end{eqnarray*}
(2) For any $w\in\Sigma_\beta$, as $n\to+\infty$, by Lemma \ref{infinitefullwords} (2) we get $N_0(w|_n)\to+\infty$ and then $\mu_p[w|_n]\to0$.
\end{remark}

\begin{proposition}\label{noatom}
The measures $\mu_p$, $\sigma_\beta^k\mu_p$, $\nu_p$ and $T_\beta^k\nu_p$ have no atoms. That is, $\mu_p(\{w\})=\sigma_\beta^k\mu_p(\{w\})=\nu_p(\{x\})=T_\beta^k\nu_p(\{x\})=0$ for any single point $w\in \Sigma_\beta$, $x\in[0,1)$ and $k\in\N$.
\end{proposition}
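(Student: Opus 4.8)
The plan is to show that no single point $w \in \Sigma_\beta$ has positive $\mu_p$-mass, and then to transfer this to the push-forwards $\sigma_\beta^k \mu_p$, $\nu_p = \pi_\beta \mu_p$, and $T_\beta^k \nu_p$ using the measurable structure already established. The atomlessness of $\mu_p$ itself is essentially immediate from Remark \ref{measureformula}: for any $w \in \Sigma_\beta$ we have $\{w\} = \bigcap_{n \geq 1}[w|_n]$, a decreasing intersection of cylinders, so by continuity of measures $\mu_p(\{w\}) = \lim_{n\to\infty} \mu_p[w|_n]$. By Remark \ref{measureformula}(2), $\mu_p[w|_n] = p^{N_0(w|_n)}(1-p)^{N_1(w|_n)} \to 0$ as $n \to \infty$, because $N_0(w|_n) \to +\infty$ by Lemma \ref{infinitefullwords}(2) (here $1 < \beta \leq 2$) and $0 < p < 1$. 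Hence $\mu_p(\{w\}) = 0$ for every $w \in \Sigma_\beta$.

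For $\nu_p$, recall that $\pi_\beta : \Sigma_\beta \to [0,1)$ is a bijection, so for any $x \in [0,1)$ the preimage $\pi_\beta^{-1}(\{x\})$ is the single point $\{\epsilon(x,\beta)\}$; therefore $\nu_p(\{x\}) = \mu_p(\pi_\beta^{-1}\{x\}) = \mu_p(\{\epsilon(x,\beta)\}) = 0$. Equivalently one can argue directly: $\{x\} = \bigcap_n I_n(x)$ and $\nu_p(I_n(x)) = \mu_p[\epsilon(x,\beta)|_n] \to 0$ by the same estimate. For the iterated push-forwards $\sigma_\beta^k \mu_p$ and $T_\beta^k \nu_p$, the point is that a single point $\{w\}$ (resp. $\{x\}$) has preimage under $\sigma_\beta^k$ (resp. $T_\beta^k$) equal to a \emph{countable} set: $\sigma_\beta^{-k}(\{w\}) \subset \{v \in \Sigma_\beta : v_{k+1}v_{k+2}\cdots = w\}$, which is contained in the finite set obtained by prepending the finitely many admissible words of length $k$, and similarly $T_\beta^{-k}(\{x\})$ is finite since $T_\beta$ is piecewise expanding with $\lfloor\beta\rfloor + 1 = 2$ branches, so $\#T_\beta^{-k}(\{x\}) \leq 2^k$. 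A countable union of $\mu_p$-null (resp. $\nu_p$-null) sets is null, so $\sigma_\beta^k\mu_p(\{w\}) = \mu_p(\sigma_\beta^{-k}\{w\}) = 0$ and $T_\beta^k\nu_p(\{x\}) = \nu_p(T_\beta^{-k}\{x\}) = 0$.

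There is no serious obstacle here; the only point requiring a little care is making sure the quantity $N_0(w|_n)$ genuinely tends to infinity along \emph{every} $w$, which is exactly the content of Lemma \ref{infinitefullwords}(2) and is why the hypothesis $1 < \beta \leq 2$ is used. The mildly delicate bookkeeping is verifying finiteness of the preimages $\sigma_\beta^{-k}(\{w\})$ and $T_\beta^{-k}(\{x\})$: for the shift one notes that any $v \in \sigma_\beta^{-k}(\{w\})$ is determined by its first $k$ coordinates, of which there are at most $2^k$ choices (and only admissible ones occur), while for $T_\beta$ one uses that each of the at most two inverse branches of $T_\beta$ is well-defined on $[0,1)$, so $T_\beta^{-k}(\{x\})$ has at most $2^k$ elements; in both cases the set is finite, hence null, and the push-forward has no atom at the given point.
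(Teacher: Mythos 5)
Your argument is correct and matches the paper's proof in essentially every respect: the paper likewise deduces atomlessness of $\mu_p$ from $\mu_p[w|_n]\to 0$ (Remark \ref{measureformula} together with Lemma \ref{infinitefullwords}(2)), and handles the push-forwards via exactly the same cardinality bounds $\sharp\sigma_\beta^{-k}\{w\}\le 2^k$, $\sharp\pi_\beta^{-1}\{x\}=1$ and $\sharp T_\beta^{-k}\{x\}\le 2^k$. Your write-up simply makes explicit the continuity-of-measure step $\mu_p(\{w\})=\lim_n\mu_p[w|_n]$ that the paper leaves implicit.
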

\begin{proof} It follows immediately from $\mu_p[w|_n]\to0$, $\sharp\sigma_\beta^{-k}\{w\}\le2^k$, $\sharp\pi_\beta^{-1}\{x\}=1$ and $\sharp T_\beta^{-k}\{x\}\le2^k$ for any $w\in \Sigma_\beta$ and $x\in[0,1)$.
\end{proof}

\begin{definition}[Invariance and ergodicity]
Let $(X, \mathcal{F}, \mu, T)$ be a measure-preserving dynamical system, that is, $(X,\cF,\mu)$ is a probability space and $\mu$ is $T$-\textit{invariant}, i.e., $T\mu = \mu$. We say that the probability measure $\mu$ is \textit{ergodic} with respect to $T$ if for every $A\in\cF$ satisfying $T^{-1}A=A$ (such a set is called $T$-\textit{invariant}), we have $\mu(A)=0$ or $1$. We also say that $(X,\cF,\mu,T)$ is ergodic.
\end{definition}

Note that $\mu_p$ is not $\sigma_\beta$-invariant and $\nu_p$ is not $T_\beta$-invariant. For example, if $\beta=\frac{1+\sqrt{5}}{2}$ is the golden ratio, then we have
$$\Sigma_\beta^*=\{w\in\bigcup_{n=1}^\infty\{0,1\}^n: 11\text{ does not appear in } w\}.$$
Hence
$$\mu_p [1] =1-p, \ \ \mbox{but} \ \ \mu_p(\sigma_\beta^{-1}[1])=\mu_p [01] =p(1-p).$$
Correspondingly,
$$\nu_p[\frac{1}{\beta},1)=1-p, \ \ \mbox{but} \ \ \nu_p(T_\beta^{-1}[\frac{1}{\beta},1))=p(1-p).$$
We recall the notion of quasi-invariance.

\begin{definition}[Quasi-invariance]\label{quasi-invariant}
Let $(X,\cF,\mu)$ be a measure space and $T$ be a measurable transformation on it. Then
\begin{itemize}
\item[(1)] $\mu$ is \textit{quasi-invariant} with respect to the transformation $T$ if $\mu$ and its image measure $T\mu$ are mutually absolutely continuous (i.e. equivalent), that is,
$$\mu \ll T\mu \ll \mu \quad(\text{i.e. }T\mu\sim\mu);$$
\item[(2)] $\mu$ is \textit{strongly quasi-invariant} with respect to the transformation $T$ if there exists a constant $C  > 0$ such that
$$C^{-1}\mu(A) \leq T^k\mu(A) \leq C\mu(A)$$
for any $k \in \N$ and $A\in\cF$. We also say $\mu$ is \textit{$C$-strongly quasi-invariant} if we know such a $C$.
\end{itemize}
\end{definition}

\begin{definition}[Quasi-Bernoulli]
A measure $\mu$ on $(\Sigma_\beta,\cB(\Sigma_\beta))$ is called \textit{quasi-Bernoulli} if there exists a constant $C > 0$ such that
$$C^{-1}\mu[w]\mu[w'] \leq \mu[ww'] \leq C\mu[w]\mu[w']$$
for every pair $w,w' \in \Sigma_\beta^*$ satisfying $ww' \in \Sigma_\beta^*$.
\end{definition}

\begin{theorem}\label{theoremquasiinvariance}
Let $1<\beta\le 2$ and $0 < p < 1$. Then \begin{itemize}
\item[\emph{(1)}] $\mu_p$ is quasi-invariant with respect to $\sigma_\beta$;
\item[\emph{(2)}] $\epsilon(1,\beta)$ is finite if and only if $\mu_p$ is quasi-Bernoulli;
\item[\emph{(3)}] $\epsilon(1,\beta)$ is finite if and only if $\mu_p$ is strongly quasi-invariant with respect to $\sigma_\beta$.
\end{itemize}
\end{theorem}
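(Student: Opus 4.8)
The plan is to exploit the explicit product formula $\mu_p[w] = p^{N_0(w)}(1-p)^{N_1(w)}$ from Remark \ref{measureformula}, which reduces every statement to a statement about the additivity (exact or up to a bounded error) of the counting functions $N_0$ and $N_1$ under concatenation; these are precisely the estimates assembled in Lemma \ref{parameter} and the Remark on $N_1$. For part (1), I would first note that $\sigma_\beta\mu_p$ and $\mu_p$ are both supported on $\Sigma_\beta$ with $\sigma_\beta^{-1}[w] = \bigcup_{a} [aw]$ over admissible single letters $a$, so $\sigma_\beta\mu_p[w] = \sum_a \mu_p[aw]$. Using Lemma \ref{parameter}(1) (with the single-letter prefix $a$) one gets $\mu_p[w] \le \mu_p[aw]/\min(p,1-p) \le \mu_p[w]/(\text{const})$ and conversely, hence $\mu_p[w]$ and $\sigma_\beta\mu_p[w]$ are comparable uniformly over all cylinders; since cylinders generate the Borel $\sigma$-algebra, a standard approximation argument via Lemma \ref{approximation} (or the monotone class / $\pi$-$\lambda$ theorems quoted in Section \ref{sec:prelim}) upgrades the cylinder comparability to $\mu_p \ll \sigma_\beta\mu_p \ll \mu_p$ on all Borel sets. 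Note this already gives quasi-invariance but \emph{not} the uniform constant independent of the iterate $k$, which is exactly the content that distinguishes (1) from (3).

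For the forward direction of (2) and (3), assume $\epsilon(1,\beta)$ is finite, say with length $M$. Then for admissible $w,w'$ with $ww'$ admissible, Lemma \ref{parameter}(3) gives $N_0(w)+N_0(w') - M \le N_0(ww') \le N_0(w)+N_0(w')$, and the Remark before Lemma \ref{tau} gives $N_1(ww') = N_1(w)+N_1(w')$ exactly. Plugging into the product formula yields
\[
\min(p,1-p)^{M}\,\mu_p[w]\mu_p[w'] \le \mu_p[ww'] \le \mu_p[w]\mu_p[w'],
\]
so $\mu_p$ is quasi-Bernoulli with $C = \min(p,1-p)^{-M}$. For strong quasi-invariance one iterates the single-letter computation from part (1): $\sigma_\beta^k\mu_p[w] = \sum_{u \in \Sigma_\beta^k,\ uw \in \Sigma_\beta^*} \mu_p[uw]$, and applying the quasi-Bernoulli bound with the length-$k$ prefix $u$ gives $C^{-1}\mu_p[u]\mu_p[w] \le \mu_p[uw] \le \mu_p[u]\mu_p[w]$; summing over the admissible $u$ (and using that $\sum_u \mu_p[u] \le 1$, with a matching lower bound coming from the fact that the admissible length-$k$ words $u$ with $uw$ admissible carry total $\mu_p$-mass bounded below — here one uses fullness: pick a full prefix so that $uw$ is automatically admissible, cf. Proposition \ref{charFULL}) produces $C^{-1}\mu_p[w] \le \sigma_\beta^k\mu_p[w] \le \mu_p[w]$ with $C$ independent of $k$. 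The cylinder bound then extends to all Borel sets by the same approximation argument as in (1).

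For the converse directions of (2) and (3), suppose $\epsilon(1,\beta)$ is infinite; I want to contradict any uniform constant. The idea is to use prefixes $w = \epsilon(1,\beta)|_n$ of the expansion of $1$, which by Proposition \ref{exaDIV}(1) are never full, together with the structure of $\cN_0$: a position $k<n$ contributes to $N_0(\epsilon(1,\beta)|_n)$ only when $\epsilon(1,\beta)|_k\,1$ is admissible, i.e. roughly when $\epsilon_{k+1} = 0$ and the prefix is not "blocked". Concatenating $w = \epsilon(1,\beta)|_n$ with a suitable short word $w'$ (for instance a single $0$, or a word chosen so that $ww'$ forces the release of many previously-forbidden $1$'s) one finds that $N_0(ww') - N_0(w) - N_0(w')$ can be made as large (negative, in the relevant normalization) as we like as $n \to \infty$, because the prefix of $1$ accumulates an unbounded number of positions where a $1$ is lexicographically forbidden; this makes the ratio $\mu_p[ww']/(\mu_p[w]\mu_p[w'])$ unbounded, killing the quasi-Bernoulli constant. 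Since quasi-Bernoulli is implied by strong quasi-invariance (take $k=|w|$ and a full prefix realizing $w$, as above), the failure of (2) forces the failure of (3) as well. \textbf{The main obstacle} I anticipate is this converse: one must construct, for $\beta$ with infinite $\epsilon(1,\beta)$, an \emph{explicit} sequence of pairs $(w,w')$ along which the concatenation defect in $N_0$ grows without bound, and verify admissibility of all the words involved — this requires careful bookkeeping with Parry's criterion (Lemma \ref{charADM}) and the $l_n(\beta)$ gap statistics, and is the place where the dichotomy "finite vs. infinite $\beta$-expansion of $1$" genuinely enters, mirroring Parry's SFT characterization invoked in the introduction.
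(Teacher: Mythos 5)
Your forward directions of (2) and (3) are essentially the paper's argument (full words ending in $0^M$, via Proposition \ref{tail-non-full}, to get the lower bound on $\sum_u\mu_p[u]$, then a monotone-class extension), apart from a harmless slip: the correct two-sided bound is $\mu_p[w]\mu_p[w']\le\mu_p[ww']\le p^{-M}\mu_p[w]\mu_p[w']$, i.e.\ your inequalities are written the wrong way round. The serious problems are in (1) and in both converses. For (1) you assert a \emph{uniform} two-sided comparability $\sigma_\beta\mu_p[w]\asymp\mu_p[w]$ over all cylinders; this is false whenever $\epsilon(1,\beta)$ is infinite, and indeed it is exactly what part (3) rules out. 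Concretely, take $w=\epsilon_2\cdots\epsilon_n$: then $N_0(1w)=0$ (every prefix of $\epsilon(1,\beta)$ followed by $1$ is inadmissible) while $N_0(w)\to\infty$ by Lemma \ref{infinitefullwords}(2), so $\mu_p[1w]/\mu_p[w]=(1-p)p^{-N_0(w)}$ is unbounded and $\sigma_\beta\mu_p[w]=\mu_p[0w]+\mu_p[1w]$ admits no upper bound $C\mu_p[w]$. Hence the direction $\sigma_\beta\mu_p\ll\mu_p$ cannot be obtained from cylinder comparability plus approximation; this is the genuinely delicate part of the theorem. The paper handles it by excising the cylinders $[\epsilon^*_2\cdots\epsilon^*_m]$ (which shrink to at most the single point $\epsilon_2\epsilon_3\cdots$, negligible since $\sigma_\beta\mu_p$ has no atoms, Proposition \ref{noatom}); on the remaining covering words the ratio $\mu_p[1w]/\mu_p[w]$ is bounded by a constant $C_m$, because such $w$ splits off a full prefix within the first $m-1$ letters and Lemma \ref{quasibernoulli}(2) applies. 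Nothing in your sketch replaces this step.

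Your converse constructions also do not work as described. For the converse of (2) you take $w=\epsilon(1,\beta)|_n$ long and $w'$ short; but whether a position $k<|w|$ lies in $\cN_0(ww')$ depends only on $w_1\cdots w_{k+1}$, so $N_0(w)\le N_0(ww')\le N_0(w)+N_0(w')$ and the defect $N_0(w)+N_0(w')-N_0(ww')$ is at most $|w'|$ --- appending a short word can never make $\mu_p[ww']/(\mu_p[w]\mu_p[w'])$ blow up. The roles must be reversed: take $w=1$ and $w'=\epsilon_2\cdots\epsilon_n$, so that $ww'=\epsilon(1,\beta)|_n$ has $N_0(ww')=0$ while $N_0(w')\ge N$ is arbitrarily large, giving $\mu_p[ww']\ge p^{-N}\mu_p[w]\mu_p[w']$. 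For the converse of (3), your reduction rests on the unproved claim that strong quasi-invariance implies quasi-Bernoulli; the hint ``take $k=|w|$'' only yields $\mu_p[ww']\le\sigma_\beta^{|w|}\mu_p[w']\le C\mu_p[w']$, which is missing the factor $\mu_p[w]$, so the implication is not established. The paper instead argues directly with $k=1$ and the same words: $\sigma_\beta\mu_p[\epsilon_2\cdots\epsilon_n]\ge\mu_p[\epsilon_1\cdots\epsilon_n]\ge(1-p)p^{-N}\mu_p[\epsilon_2\cdots\epsilon_n]$, contradicting any uniform constant. (Incidentally, with $w$ the single letter $1$ the failure of quasi-Bernoulli does transfer to strong quasi-invariance because $\mu_p[1]=1-p$ is a fixed constant, but that is the specific computation above, not the general implication you invoke.)
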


The proof of this is based on the following lemma.

\begin{lemma}\label{quasibernoulli}
Let $1<\beta\le2$, $0<p<1$ and $w,w'\in\Sigma_\beta^*$ with $ww'\in\Sigma_\beta^*$. Then
\begin{itemize}
\item[\emph{(1)}] $$\mu_p[w] \geq \mu_p[ww'] \geq \mu_p[w]\mu_p[w'];$$
\item[\emph{(2)}] when $w$ is full, we have
$$\mu_p[ww']=\mu_p[w]\mu_p[w'];$$
\item[\emph{(3)}] if additionally $\epsilon(1,\beta)=\epsilon_1\cdots\epsilon_M0^{\infty}$ with $\epsilon_M\neq 0$, then
$$\mu_p[ww'] \leq p^{-M}\mu_p[w]\mu_p[w'].$$
In particular, $\mu_p$ is quasi-Bernoulli.
\end{itemize}
\end{lemma}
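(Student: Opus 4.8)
The plan is to reduce all three statements to the combinatorial estimates on $N_0$ already established in Lemma~\ref{parameter}, by exploiting the explicit product formula for $\mu_p$. First I would record that, by Remark~\ref{measureformula},
$$\mu_p[u] = p^{N_0(u)}(1-p)^{N_1(u)} \qquad \text{for every } u\in\Sigma_\beta^*,$$
and that $N_1$ is additive, i.e.\ $N_1(ww')=N_1(w)+N_1(w')$ whenever $ww'\in\Sigma_\beta^*$ (the remark following the definition of $N_0,N_1$ in Section~\ref{sec:digit}). Consequently the $(1-p)$-powers cancel in the ratio, so that
$$\frac{\mu_p[ww']}{\mu_p[w]\,\mu_p[w']} = p^{\,N_0(ww')-N_0(w)-N_0(w')}.$$
Since $0<p<1$, the map $t\mapsto p^t$ is strictly decreasing; I would state this explicitly, since it is exactly what flips the direction of the $N_0$-inequalities into the claimed measure inequalities. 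After that, each item becomes a one-line translation.

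For part~(1): the inequality $\mu_p[w]\ge\mu_p[ww']$ is immediate from monotonicity of the measure $\mu_p$ together with $[ww']\subseteq[w]$ (alternatively, combine $N_0(ww')\ge N_0(w)$ from Lemma~\ref{parameter}(1) and $N_1(ww')\ge N_1(w)$ with $0<p,\,1-p<1$). For the lower bound $\mu_p[ww']\ge\mu_p[w]\mu_p[w']$ I would invoke Lemma~\ref{parameter}(1), which gives $N_0(ww')\le N_0(w)+N_0(w')$, hence $N_0(ww')-N_0(w)-N_0(w')\le 0$ and therefore $p^{\,N_0(ww')-N_0(w)-N_0(w')}\ge p^{0}=1$. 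Part~(2) is the same computation with Lemma~\ref{parameter}(2): when $w$ is full, $N_0(ww')=N_0(w)+N_0(w')$, so the exponent vanishes and the ratio equals $1$.

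For part~(3), with $\epsilon(1,\beta)=\epsilon_1\cdots\epsilon_M0^\infty$ and $\epsilon_M\ne 0$, Lemma~\ref{parameter}(3) yields $N_0(ww')\ge N_0(w)+N_0(w')-M$, i.e.\ $N_0(ww')-N_0(w)-N_0(w')\ge -M$; since $t\mapsto p^t$ is decreasing this gives $p^{\,N_0(ww')-N_0(w)-N_0(w')}\le p^{-M}$, which is precisely $\mu_p[ww']\le p^{-M}\mu_p[w]\mu_p[w']$. Combining with the lower bound of part~(1) and taking $C:=p^{-M}\ge 1$ shows that $\mu_p$ is quasi-Bernoulli.

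As for where the difficulty lies: essentially none of it is in this lemma. All the genuine combinatorial content — decomposing $ww'$ into a full prefix, a block of length $\tau$, and a remainder, and controlling $N_0$ across the junctions — has already been absorbed into Lemma~\ref{parameter}. The only point requiring care here is keeping track of which way the inequalities point, because raising $p\in(0,1)$ to a power reverses the order; once the monotonicity of $t\mapsto p^t$ is invoked, the proof is a direct substitution.
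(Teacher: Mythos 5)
Your proof is correct and follows exactly the paper's route: the paper's own proof is a one-line citation of Remark \ref{measureformula}, Lemma \ref{parameter}, and the additivity of $N_1$, and your write-up simply makes explicit the cancellation of the $(1-p)$-factors and the order-reversal under $t\mapsto p^t$ that the paper leaves implicit. Nothing to add.
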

\begin{proof} It follows from Remark \ref{measureformula}, Lemma \ref{parameter} and $N_1(ww')=N_1(w)+N_1(w')$ for any $ww'\in\Sigma_\beta^*$.
\end{proof}

\begin{proof}[Proof of Theorem \ref{theoremquasiinvariance}]
\item(1) \textcircled{\footnotesize{$1$}} Prove $\mu_p\ll\sigma_\beta\mu_p$.
\newline Let $A\in\cB(\Sigma_\beta)$ with $\sigma_\beta\mu_p(A)=0$. It suffices to prove $\mu_p(A)=0$. For any $\epsilon>0$, by
$$\mu_p(\sigma_\beta^{-1}A)=\inf\{\sum_n\mu_p[w^{(n)}]: w^{(n)}\in\Sigma_\beta^*, \sigma_\beta^{-1}A\subset\bigcup_n[w^{(n)}]\}=0,$$
there exists $\{w^{(n)}\}\subset\Sigma_\beta^*$ such that
$$\sigma_\beta^{-1}A\subset\bigcup_n[w^{(n)}]\text{ and }\sum_n\mu_p[w^{(n)}]<\epsilon.$$
Since $\epsilon$ can be small enough such that $\mu_p[0]=p$ and $\mu_p[1]=1-p>\epsilon$, we can assume $a_n:=|w^{(n)}|\ge2$ for any $n$ without loss of generality. By the fact that $\sigma_\beta$ is surjective, we get
$$A=\sigma_\beta(\sigma_\beta^{-1}A)\subset\sigma_\beta(\bigcup_n[w^{(n)}])\subset\bigcup_n\sigma_\beta[w^{(n)}]=\bigcup_n\sigma_\beta[w^{(n)}_1w^{(n)}_2\cdots w^{(n)}_{a_n}]\subset\bigcup_n[w^{(n)}_2\cdots w^{(n)}_{a_n}].$$
Therefore
\begin{eqnarray*}
\mu_p(A)&\le&\sum_n\mu_p[w^{(n)}_2\cdots w^{(n)}_{a_n}]\\
&\le&\frac{1}{\min\{p,1-p\}}\sum_n\mu_p[w^{(n)}_1]\mu_p[w^{(n)}_2\cdots w^{(n)}_{a_n}]\\
&\le&\frac{1}{\min\{p,1-p\}}\sum_n\mu_p[w^{(n)}]\\
&<&\frac{\epsilon}{\min\{p,1-p\}}
\end{eqnarray*}
for any $\epsilon>0$.
\newline\textcircled{\footnotesize{$2$}} Prove $\sigma_\beta\mu_p\ll\mu_p$.
\newline Let $B\in\cB(\Sigma_\beta)$ with $\mu_p(B)=0$. It suffices to prove $\sigma_\beta\mu_p(B)=0$. For any $m\in\N_{\ge2}$, we define $B_m:=B\setminus[\epsilon^*_2\cdots\epsilon^*_m]$.
\begin{itemize}
\item[i)] Prove that $\sigma_\beta\mu_p(B_m)$ increase to $\sigma_\beta\mu_p(B)$.
  \newline\textcircled{\footnotesize{$a$}} If $\epsilon(1,\beta)$ is finite, then $\epsilon^*_2\epsilon^*_3\epsilon^*_4\cdots\notin\Sigma_\beta$, $[\epsilon^*_2\cdots\epsilon^*_m]$ decrease to $\emptyset$, $B_m$ increase to $B$ and $\sigma_\beta\mu_p(B_m)$ increase to $\sigma_\beta\mu_p(B)$.
  \newline\textcircled{\footnotesize{$b$}} If $\epsilon(1,\beta)$ is infinite, then $\epsilon^*_2\epsilon^*_3\epsilon^*_4\cdots=\epsilon_2\epsilon_3\epsilon_4\cdots=\epsilon(T_\beta1,\beta)\in\Sigma_\beta$, $[\epsilon^*_2\cdots\epsilon^*_m]$ decrease to $\{\epsilon^*_2\epsilon^*_3\epsilon^*_4\cdots\}$ (a single point set), $B_m$ increase to $(B\setminus\{\epsilon^*_2\epsilon^*_3\epsilon^*_4\cdots\})$ and $\sigma_\beta\mu_p(B_m)$ increase to $\sigma_\beta\mu_p(B\setminus\{\epsilon^*_2\epsilon^*_3\epsilon^*_4\cdots\})$. Since $\sigma_\beta\mu_p$ has no atom (by Proposition \ref{noatom}), we get $\sigma_\beta\mu_p(B_m)$ increase to $\sigma_\beta\mu_p(B)$.
\item[ii)] In order to get $\sigma_\beta\mu_p(B)=0$, by i) it suffices to prove that for any $m\in\N_{\ge2}$, $\sigma_\beta\mu_p(B_m)=0$.
  \newline Fix $m\in\N_{\ge2}$. By $\mu_p(B_m)\le\mu_p(B)=0$, we get
  $$\inf\Big\{\sum_n\mu_p[w^{(n)}]:w^{(n)}\in\Sigma_\beta^*, B_m\subset\bigcup_n[w^{(n)}]\Big\}=0.$$
  For any $\epsilon>0$, there exists $\{w^{(n)}\}_{n\in N'}\subset\Sigma_\beta^*$ with
  $$B_m\subset\bigcup_{n\in N'}[w^{(n)}] \quad\text{such that}\quad \sum_{n\in N'}\mu_p[w^{(n)}]<\epsilon$$
  where $N'$ is an index set with cardinality at most countable. Since $\epsilon$ can be small enough such that
  $$\delta_m:=\min\{\mu_p[w]:w\in\Sigma_\beta^*, |w|\le m-1\}>\epsilon,$$
  we can assume $a_n:=|w^{(n)}|\ge m$ for all $n\in N'$. Let
  $$N:=\{n\in N':w^{(n)}|_{m-1}\neq\epsilon^*_2\cdots\epsilon^*_m\}\subset N'.$$
  By the fact that for any $n\in N$, $[w^{(n)}]\cap[\epsilon^*_2\cdots\epsilon^*_m]=\emptyset$ and for any $n\in N'\setminus N$, $[w^{(n)}]\subset[\epsilon^*_2\cdots\epsilon^*_m]$, we get
  \begin{eqnarray*}
  B_m&=&B_m\setminus[\epsilon^*_2\cdots\epsilon^*_m]\subset\bigcup_{n\in N'}\big([w^{(n)}]\setminus[\epsilon^*_2\cdots\epsilon^*_m]\big)\\
  &=&\big(\bigcup_{n\in N}\big([w^{(n)}]\setminus[\epsilon^*_2\cdots\epsilon^*_m]\big)\big)\bigcup\big(\bigcup_{n\in N'\setminus N}\big([w^{(n)}]\setminus[\epsilon^*_2\cdots\epsilon^*_m]\big)\big)=\bigcup_{n\in N}[w^{(n)}]
  \end{eqnarray*}
  and then $\sigma_\beta^{-1}B_m\subset\bigcup_{n\in N}\sigma_\beta^{-1}[w^{(n)}]$. Let
  $$N_0:=\{n\in N: 1w^{(n)}\notin\Sigma_\beta^*\}\text{ and }N_1:=\{n\in N:1w^{(n)}\in\Sigma_\beta^*\}.$$
  Then for any $n\in N_0$, $\sigma_\beta^{-1}[w^{(n)}]=[0w^{(n)}]$ and for any $n\in N_1$, $\sigma_\beta^{-1}[w^{(n)}]=[0w^{(n)}]\cup[1w^{(n)}]$. Thus
  $$\sigma_\beta^{-1}B_m\subset\big(\bigcup_{n\in N}[0w^{(n)}]\big)\bigcup\big(\bigcup_{n\in N_1}[1w^{(n)}]\big)$$
  and
  $$\mu_p(\sigma_\beta^{-1}B_m)\le\sum_{n\in N}\mu_p[0w^{(n)}]+\sum_{n\in N_1}\mu_p[1w^{(n)}]=:J_1+J_2$$
  where by Lemma \ref{quasibernoulli} (2),
  $$J_1=\sum_{n\in N}p\mu_p[w^{(n)}]\le p\sum_{n\in N'}\mu_p[w^{(n)}]<p\epsilon.$$
  Now we estimate the upper bounded of $T_2$.
  \newline For each $n\in N_1\subset N$, by $1w^{(n)}_1\cdots w^{(n)}_{m-1}\neq\epsilon^*_1\epsilon^*_2\cdots\epsilon^*_m$, there exists $1\le k_n\le m-1$ such that $1=\epsilon^*_1, w^{(n)}_1=\epsilon^*_2, \cdots w^{(n)}_{k_n-1}=\epsilon^*_{k_n}$ and $w^{(n)}_{k_n}<\epsilon^*_{k_n+1}$. Since $\epsilon^*_1\cdots\epsilon^*_{k_n}\epsilon^*_{k_n+1}\in\Sigma_\beta^*$, by Lemma \ref{admfull} and Proposition \ref{fullfullfull} (2), we know that both $1w^{(n)}_1\cdots w^{(n)}_{k_n}$ and $w^{(n)}_1\cdots w^{(n)}_{k_n}$ are full. It follows from Lemma \ref{quasibernoulli} (2) that
  $$\mu_p[1w^{(n)}]=\mu_p[1w^{(n)}_1\cdots w^{(n)}_{k_n}]\mu_p[w^{(n)}_{k_n+1}\cdots w^{(n)}_{a_n}]$$
  and
  $$\mu_p[w^{(n)}]=\mu_p[w^{(n)}_1\cdots w^{(n)}_{k_n}]\mu_p[w^{(n)}_{k_n+1}\cdots w^{(n)}_{a_n}].$$
  Let
  $$C_m:=\max\Big\{\frac{\mu_p[1w]}{\mu_p[w]}:w\in\Sigma_\beta^*\text{ with }1w\in\Sigma_\beta^*\text{ and }1\le|w|\le m-1\Big\}<\infty.$$
  By $k_n\le m-1$, we get $\mu_p[1w^{(n)}]\le C_m\mu_p[w^{(n)}]$ for any $n\in N_1$. This implies
  $$J_2=\sum_{n\in N_1}\mu_p[1w^{(n)}]\le C_m\sum_{n\in N_1}\mu_p[w^{(n)}]\le C_m\sum_{n\in N'}\mu_p[w^{(n)}]<C_m\epsilon.$$
  Therefore $\mu_p(\sigma_\beta^{-1}B_m)<(p+C_m)\epsilon$ for any $0<\epsilon<\delta_m$. We conclude that $\sigma_\beta\mu_p(B_m)=0$.
\end{itemize}

\item(2) $\boxed{\Rightarrow}$ follows from Lemma \ref{quasibernoulli}.
\newline$\boxed{\Leftarrow}$ (By contradiction) Assume that $\epsilon(1,\beta)=\epsilon_1\epsilon_2\epsilon_3\cdots$ is infinite. By $\epsilon_2\epsilon_3\cdots=\epsilon(T_\beta1,\beta)\in\Sigma_\beta$ and Lemma \ref{infinitefullwords} (2), we get $N_0(\epsilon_2\epsilon_3\cdots)=+\infty$. Then for any $N\in\N$, there exists $n\in\N$ such that $N_0(\epsilon_2\epsilon_3\cdots\epsilon_n)\ge N$. Let $w:=\epsilon_1=1$ and $w':=\epsilon_2\epsilon_3\cdots\epsilon_n$. Then $ww'=\epsilon_1\cdots\epsilon_n$ and obviously
$$N_0(ww')=0=0+N-N\le N_0(w)+N_0(w')-N.$$
By Remark \ref{measureformula} (1) and $N_1(ww')=N_1(w)+N_1(w')$, we get
$$\mu_p[ww']\ge p^{-N}\mu_p[w]\mu_p[w'].$$
Since for any $N\in\N$, there exists $w,w'$ which satisfy the above inequality and $p^{-N}$ can be arbitrary large, we know that $\mu_p$ is not quasi-Bernoulli.

\item(3) $\boxed{\Leftarrow}$ (By contradiction) Assume that $\epsilon(1,\beta)=\epsilon_1\epsilon_2\epsilon_3\cdots$ is infinite. By $\epsilon_2\epsilon_3\cdots=\epsilon(T_\beta1,\beta)\in\Sigma_\beta$ and Lemma \ref{infinitefullwords} (2), we get $N_0(\epsilon_2\epsilon_3\cdots)=+\infty$. Then for any $N\in\N$, there exists $n\in\N$ such that $N_0(\epsilon_2\epsilon_3\cdots\epsilon_n)\ge N$. Let $w:=\epsilon_2\cdots\epsilon_n$. Then
$$\sigma_\beta\mu_p[w]=\mu_p[0w]+\mu_p[1w]\ge\mu_p[\epsilon_1\epsilon_2\cdots\epsilon_n]=p^{N_0(\epsilon_1\cdots\epsilon_n)}(1-p)^{N_1(\epsilon_1\cdots\epsilon_n)}=(1-p)^{N_1(\epsilon_1\cdots\epsilon_n)}$$
and
$$\mu_p[w]=p^{N_0(w)}(1-p)^{N_1(w)}\le p^N(1-p)^{N_1(\epsilon_1\cdots\epsilon_n)}.$$
Thus
$$\sigma_\beta\mu_p[w]\ge(1-p)p^{-N}\mu_p[w].$$
Since for any $N\in\N$, there exists $w$ which satisfy the above inequality and $(1-p)p^{-N}$ can be arbitrary large, we know that $\mu_p$ is not strongly quasi-invariant.
\newline$\boxed{\Rightarrow}$ Let $\epsilon(1,\beta)=\epsilon_1\cdots\epsilon_M0^\infty$ with $\epsilon_M\neq0$ and $c=p^{-M}>0$.
\newline \textcircled{\footnotesize{$1$}} Prove $c^{-1}\mu_p[w]\le\sigma_\beta^k\mu_p[w]\le c\mu_p[w]$ for all $k\in\N$ and $w\in\Sigma_\beta^*$.
\newline Notice that
$$\sigma_\beta^{-k}[w]=\bigcup_{u_1\cdots u_kw\in\Sigma_\beta^*}[u_1\cdots u_kw]$$
is a disjoint union.
\begin{itemize}
\item[i)]Estimate the upper bound of $\sigma_\beta^k\mu_p[w]$:
\begin{eqnarray*}
\mu_p\sigma_\beta^{-k}[w]&=&\sum_{u_1\cdots u_kw\in\Sigma_\beta^*}\mu_p[u_1\cdots u_kw]\\
&\overset{\textcircled{\footnotesize{$a$}}}{\le}&\sum_{u_1\cdots u_kw\in\Sigma_\beta^*}p^{-M}\mu_p[u_1\cdots u_k]\mu_p[w]\\
&\le& p^{-M}\sum_{u_1\cdots u_k\in\Sigma_\beta^*}\mu_p[u_1\cdots u_k]\mu_p[w]\\
&=&p^{-M}\mu_p[w].
\end{eqnarray*}
where \textcircled{\footnotesize{$a$}} follows from Lemma \ref{quasibernoulli}.
\item[ii)]Estimate the lower bound of $\sigma_\beta^k\mu_p[w]$:
$$\mu_p\sigma_\beta^{-k}[w]=\sum_{u_1\cdots u_kw\Sigma_\beta^*}\mu_p[u_1\cdots u_kw]\ge\sum_{u_1\cdots u_{k-M}0^M\Sigma_\beta^*}\mu_p[u_1\cdots_{k-M}0^Mw].$$
(Without loss of generality, we assume $k\ge M$. Otherwise, we consider $0^kw$ instead of $u_1\cdots u_{k-M}0^Mw$). By Proposition \ref{tail-non-full}, $u_1\cdots u_{k-m}0^M$ is full for any $u_1\cdots u_{k-m}\in\Sigma_\beta^*$. Then by Proposition \ref{charFULL} (4), we get
$$u_1\cdots u_{k-M}0^Mw\in\Sigma_\beta^*\Longleftrightarrow u_1\cdots u_{k-M}\in\Sigma_\beta^*.$$
Therefore
\begin{eqnarray*}
\mu_p\sigma_\beta^{-k}[w]&\ge&\sum_{u_1\cdots u_{k-M}\in\Sigma_\beta^*}\mu_p[u_1\cdots u_{k-M}0^Mw]\\
&\overset{\textcircled{\footnotesize{$b$}}}{=}&\sum_{u_1\cdots u_{k-M}\in\Sigma_\beta^*}\mu_p[u_1\cdots u_{k-M}0^M]\mu_p[w]\\
&\overset{\textcircled{\footnotesize{$c$}}}{\ge}&\sum_{u_1\cdots u_{k-M}\in\Sigma_\beta^*}\mu_p[u_1\cdots u_{k-M}]p^M\mu_p[w]\\
&=&p^M\mu_p[w]
\end{eqnarray*}
where \textcircled{\footnotesize{$b$}} and \textcircled{\footnotesize{$c$}} follow from Lemma \ref{quasibernoulli} (2) and (1) respectively.
\end{itemize}
\textcircled{\footnotesize{$2$}} Prove $c^{-1}\mu_p(B)\le\sigma_\beta^k\mu_p(B)\le c\mu_p(B)$ for all $k\in\N$ and $B\in\cB(\Sigma_\beta)$.
\newline Let $\cC:=\{[w]:w\in\Sigma_\beta^*\}\cup\{\emptyset\}$, $\cC_{\Sigma f}:=\{\bigcup_{i=1}^n C_i:C_1,\cdots,C_n\in\cC \text{ are disjoint, } n\in\N\}$ and
$$\cG:=\{B\in\cB(\Sigma_\beta):c^{-1}\mu_p(B)\le\sigma_\beta^k\mu_p(B)\le c\mu_p(B) \text{ for all }k\in\N\}.$$
Then $\cC$ is a semi-algebra, $\cC_{\Sigma f}$ is the algebra generated by $\cC$ (by Theorem \ref{approximation} (1)) and $\cG$ is a monotone class. Since in \textcircled{\footnotesize{$1$}} we have already $\cC\subset\cG$, it is obvious that $\cC_{\Sigma f}\subset\cG\subset\cB(\Sigma_\beta)$. By Monotone Class Theorem (Theorem \ref{monotone class theorem}), we get $\cG=\cB(\Sigma_\beta)$.
\end{proof}

By Theorem \ref{theoremquasiinvariance}, we get the following.

\begin{corollary}\label{nustrong} Let $1<\beta\le2$ and $0<p<1$. Then
\begin{itemize}
\item[(1)] $\nu_p$ is quasi-invariant with respect to $T_\beta$;
\item[(2)] $\epsilon(1,\beta)$ is finite if and only if $\nu_p$ is strongly quasi-invariant with respect to $T_\beta$.
\end{itemize}
\end{corollary}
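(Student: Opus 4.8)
The plan is to transport the results of Theorem \ref{theoremquasiinvariance} through the projection $\pi_\beta$, exploiting the fact that $\pi_\beta$ conjugates the shift $\sigma_\beta$ to the $\beta$-transformation $T_\beta$.

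First I would record the conjugacy. Since $\pi_\beta:\Sigma_\beta\to[0,1)$ is a continuous bijection with inverse $\epsilon(\cdot,\beta)$, and since directly from the definitions of $T_\beta$, $\sigma_\beta$ and the $\beta$-expansion one checks $T_\beta\circ\pi_\beta=\pi_\beta\circ\sigma_\beta$ (indeed $\pi_\beta(\sigma_\beta w)=\beta\pi_\beta(w)-w_1$ and $w_1=\lfloor\beta\pi_\beta(w)\rfloor$), it follows that $\pi_\beta^{-1}\circ T_\beta^{-k}=\sigma_\beta^{-k}\circ\pi_\beta^{-1}$ for every $k\in\N$. Hence, for any Borel set $A\subset[0,1)$, the set $\pi_\beta^{-1}(A)\in\cB(\Sigma_\beta)$ by continuity of $\pi_\beta$, and
$$\nu_p(A)=\mu_p(\pi_\beta^{-1}A),\qquad T_\beta^k\nu_p(A)=\nu_p(T_\beta^{-k}A)=\mu_p(\sigma_\beta^{-k}\pi_\beta^{-1}A)=\sigma_\beta^k\mu_p(\pi_\beta^{-1}A).$$
Thus every statement about $(\nu_p,T_\beta)$ on Borel subsets of $[0,1)$ translates into a statement about $(\mu_p,\sigma_\beta)$ on Borel subsets of $\Sigma_\beta$.

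\textbf{Part (1) and the ``if'' direction of (2).} For (1), if $A$ is Borel with $T_\beta\nu_p(A)=0$, then $\sigma_\beta\mu_p(\pi_\beta^{-1}A)=0$, so Theorem \ref{theoremquasiinvariance}(1) gives $\mu_p(\pi_\beta^{-1}A)=0$, i.e.\ $\nu_p(A)=0$; conversely, if $\nu_p(A)=0$ then $\mu_p(\pi_\beta^{-1}A)=0$, so $\sigma_\beta\mu_p(\pi_\beta^{-1}A)=0$, i.e.\ $T_\beta\nu_p(A)=0$. Hence $\nu_p\sim T_\beta\nu_p$. For the ``if'' direction of (2), suppose $\epsilon(1,\beta)$ is finite; by Theorem \ref{theoremquasiinvariance}(3) there is a constant $c>0$ (one may take $c=p^{-M}$) with $c^{-1}\mu_p(B)\le\sigma_\beta^k\mu_p(B)\le c\mu_p(B)$ for all $k\in\N$ and $B\in\cB(\Sigma_\beta)$. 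Applying this with $B=\pi_\beta^{-1}A$ and using the displayed identities yields $c^{-1}\nu_p(A)\le T_\beta^k\nu_p(A)\le c\nu_p(A)$, so $\nu_p$ is strongly quasi-invariant with respect to $T_\beta$.

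\textbf{The ``only if'' direction of (2).} I would argue by contraposition, and this is the one place where a small amount of care is needed: to avoid having to verify that $\pi_\beta^{-1}$ is Borel, I test the failure of strong quasi-invariance on cylinders only, where $\pi_\beta^{-1}(I(w))=[w]$ holds exactly by injectivity of $\pi_\beta$. Assume $\epsilon(1,\beta)$ is infinite. The contrapositive argument inside the proof of Theorem \ref{theoremquasiinvariance}(3) produces, for each $N\in\N$, a word $w=\epsilon_2\cdots\epsilon_n\in\Sigma_\beta^*$ with $\sigma_\beta\mu_p[w]\ge(1-p)p^{-N}\mu_p[w]$. Then $\nu_p(I(w))=\mu_p[w]$ and $T_\beta\nu_p(I(w))=\mu_p(\sigma_\beta^{-1}[w])=\sigma_\beta\mu_p[w]\ge(1-p)p^{-N}\nu_p(I(w))$. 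Since $N$ is arbitrary and $(1-p)p^{-N}\to\infty$, no finite constant $C$ can satisfy $T_\beta\nu_p(A)\le C\nu_p(A)$ for all Borel $A$, so $\nu_p$ is not strongly quasi-invariant with respect to $T_\beta$. The only non-formal input in the whole argument is Theorem \ref{theoremquasiinvariance}; everything else is bookkeeping along the conjugacy $\pi_\beta$, so I do not expect any serious obstacle beyond being careful about measurability in the last step.
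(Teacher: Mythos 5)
Your proposal is correct and is essentially the argument the paper intends: the paper states the corollary with only the remark ``By Theorem \ref{theoremquasiinvariance}, we get the following,'' and your transfer along the conjugacy $T_\beta\circ\pi_\beta=\pi_\beta\circ\sigma_\beta$ (including the careful restriction to cylinders in the ``only if'' direction to sidestep measurability of forward images) supplies exactly the omitted bookkeeping.
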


\begin{theorem}\label{mp}
Let $1<\beta\le2$ and $0<p<1$. If $\epsilon(1,\beta)$ is finite, then there exists a unique $T_\beta$-ergodic probability measure $m_p$ on $([0,1),\cB[0,1))$ equivalent to $\nu_p$, where $m_p$ is defined by
$$m_p(B):=\lim_{n\to\infty}\frac{1}{n}\sum_{k=0}^{n-1}T_\beta^k\nu_p(B)\quad\text{for }B\in\cB[0,1).$$
\end{theorem}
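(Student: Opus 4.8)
The plan is to obtain $m_p$ as a Cesàro limit of the pushforwards $T_\beta^k\nu_p$ and then check its properties one by one, using the strong quasi-invariance from Corollary \ref{nustrong}(2) as the central tool. First I would establish that the limit defining $m_p(B)$ exists for every $B\in\cB[0,1)$. The natural route is to work on the semi-algebra of cylinders: by Theorem \ref{theoremquasiinvariance}(3) and Lemma \ref{quasibernoulli}, when $\epsilon(1,\beta)=\epsilon_1\cdots\epsilon_M0^\infty$ one has explicit control $c^{-1}\mu_p[w]\le\sigma_\beta^k\mu_p[w]\le c\mu_p[w]$ with $c=p^{-M}$, and in fact $\sigma_\beta^k\mu_p[w]$ can be written down combinatorially as a finite sum over the admissible prefixes $u_1\cdots u_k$; one should see that along the full-word structure (Proposition \ref{fullfullfull}, Proposition \ref{tail-non-full}) the sequence $\sigma_\beta^k\mu_p[w]$ is eventually quasi-constant and the Cesàro averages converge. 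Pushing forward under $\pi_\beta$ turns this into convergence of $\frac1n\sum_{k=0}^{n-1}T_\beta^k\nu_p(I(w))$ for every cylinder $I(w)$ in $[0,1)$. Since cylinders form a semi-algebra generating $\cB[0,1)$, one then extends to the generated algebra $\cC_{\Sigma f}$ by finite additivity (Lemma \ref{approximation}(1)), and to all of $\cB[0,1)$ by the monotone class theorem (Theorem \ref{monotone class theorem}), exactly as in the proof of Theorem \ref{theoremquasiinvariance}(3), checking along the way that $\cG=\{B:\lim$ exists$\}$ is a monotone class — here the uniform two-sided bound $c^{-1}\nu_p(B)\le T_\beta^k\nu_p(B)\le c\nu_p(B)$ gives the needed domination to pass limits through monotone set operations. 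This bound also shows the limit $m_p$ is a genuine probability measure (it is finitely additive, bounded, and countably additive by the same domination argument), and simultaneously gives $c^{-1}\nu_p(B)\le m_p(B)\le c\nu_p(B)$, hence $m_p\sim\nu_p$.

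Next I would verify $T_\beta$-invariance of $m_p$. This is the standard Krylov–Bogolyubov telescoping: for any $B$,
$$T_\beta m_p(B)=\lim_{n\to\infty}\frac1n\sum_{k=0}^{n-1}T_\beta^{k+1}\nu_p(B)=\lim_{n\to\infty}\Big(\frac1n\sum_{k=0}^{n-1}T_\beta^{k}\nu_p(B)+\frac{T_\beta^n\nu_p(B)-\nu_p(B)}{n}\Big)=m_p(B),$$
where the correction term vanishes because $T_\beta^n\nu_p(B)\le c\,\nu_p(B)\le c$ is uniformly bounded. So $m_p$ is $T_\beta$-invariant.

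For ergodicity, suppose $A\in\cB[0,1)$ is $T_\beta$-invariant, $T_\beta^{-1}A=A$; I want $m_p(A)\in\{0,1\}$, and since $m_p\sim\nu_p$ it suffices to show $\nu_p(A)\in\{0,1\}$. Here I would exploit the full-word structure of $\Sigma_\beta$: because every admissible word has a full prefix of every sufficiently large length (Lemma \ref{infinitefullwords}(1)) and, after a full word $u$, concatenation is completely free and multiplicative for $\mu_p$ (Proposition \ref{charFULL}, Lemma \ref{quasibernoulli}(2)), the measure $\mu_p$ enjoys an approximate mixing/Kolmogorov-type $0$–$1$ behavior: conditioning on a full cylinder $[u]$, the shifted measure is exactly $\mu_p$ again. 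Concretely, transferring $A$ back to $\Sigma_\beta$ as $A'=\pi_\beta^{-1}A$ with $\sigma_\beta^{-1}A'=A'$ (mod $\nu_p$-null, using Proposition \ref{noatom} to handle the at most countable discrepancy in $\pi_\beta$), one approximates $\1_{A'}$ in $L^1(\mu_p)$ by a finite union of cylinders $E$ (Lemma \ref{approximation}(2)), refines $E$ into full cylinders (Lemma \ref{countabledisjointunionoffull}), and then uses $\sigma_\beta^{-n}A'=A'$ together with $\mu_p[u\,w]=\mu_p[u]\mu_p[w]$ for full $u$ to get $\mu_p(A'\cap \sigma_\beta^{-n}[u^{-1}\cdot(\cdot)])\approx \mu_p(A')\mu_p(A')$ in the limit, forcing $\mu_p(A')^2=\mu_p(A')$. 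Finally, uniqueness: if $m'$ is another $T_\beta$-ergodic probability measure equivalent to $\nu_p$, then $m'$ and $m_p$ are mutually absolutely continuous and both ergodic, and two equivalent ergodic invariant measures must coincide (the Radon–Nikodym derivative is $T_\beta$-invariant, hence $m'$-a.e. constant, hence $m'=m_p$).

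The main obstacle I anticipate is the ergodicity step: establishing the $0$–$1$ law for $\nu_p$ (equivalently $\mu_p$). Quasi-invariance alone does not give mixing, so the argument must genuinely use the combinatorics of full words in $\Sigma_\beta$ — that full cylinders "reset" $\mu_p$ exactly (Lemma \ref{quasibernoulli}(2), Proposition \ref{charFULL}) and occur with controlled frequency along every admissible sequence — to run the approximation-by-cylinders / independence argument cleanly, including the bookkeeping needed to promote a partial-cylinder approximation of $A'$ to one built from full cylinders and to control the non-full "error" prefixes, whose lengths are bounded by $M$ when $\epsilon(1,\beta)$ is finite (Lemma \ref{tau}, Proposition \ref{tail-non-full}). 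Everything else — existence of the Cesàro limit, its being a probability measure, invariance, equivalence to $\nu_p$, and uniqueness — follows routinely from the uniform two-sided bounds and standard measure-theoretic extension arguments already used in the proof of Theorem \ref{theoremquasiinvariance}.
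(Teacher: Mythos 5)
Your overall architecture matches the paper's proof in most respects (Ces\`aro definition of $m_p$, the two-sided bound $c^{-1}\nu_p\le \frac1n\sum_k T_\beta^k\nu_p\le c\nu_p$ from Corollary \ref{nustrong} giving $m_p\sim\nu_p$, the telescoping argument for $T_\beta$-invariance, the full-cylinder $0$--$1$ law for ergodicity, and the standard uniqueness step). The one place where you genuinely diverge is the existence of the limit, and that is where your proposal has a hole. The paper does not prove convergence cylinder-by-cylinder at all: it writes $m_p^n(B)=\int\frac1n\sum_{k=0}^{n-1}\mathbbm{1}_B(T_\beta^kx)\,d\nu_p(x)$ and obtains $\nu_p$-a.e.\ convergence of these Birkhoff averages from the Dunford--Miller theorem (Lemma \ref{aeexist}), which applies exactly because strong quasi-invariance gives $\frac1n\sum_{k}\nu_p(T_\beta^{-k}B)\le c\,\nu_p(B)$; dominated convergence then yields $\lim_n m_p^n(B)$ for \emph{every} Borel $B$ in one stroke. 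Your substitute --- ``one should see that $\sigma_\beta^k\mu_p[w]$ is eventually quasi-constant and the Ces\`aro averages converge'' --- is precisely the nontrivial analytic input, and you assert it rather than prove it. ``Eventually quasi-constant'' is not what happens in general (the sequence $\sigma_\beta^k\mu_p[w]$ can oscillate; only Ces\`aro convergence should be expected), and Propositions \ref{fullfullfull} and \ref{tail-non-full} by themselves do not deliver it. The claim is true and fillable: when $\epsilon(1,\beta)$ is finite, admissibility of $uw$ and the increment $N_0(uw)-N_0(u)$ depend only on a bounded suffix of $u$, so the random walk $(X_n)$ is a finite-state Markov chain and the Ces\`aro averages of its $k$-step distributions converge; but this argument has to be set up explicitly, so as written the existence step is a genuine gap, whereas the surrounding extension machinery you describe (semi-algebra of cylinders, monotone class, with the uniform domination by $c\nu_p$ making the class of sets where the limit exists a monotone class) is sound.

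The remaining steps are essentially the paper's. The $0$--$1$ law you sketch for ergodicity is the paper's Lemma \ref{01}: there the heuristic ``full cylinders reset $\mu_p$'' is made precise by proving $\mu_p([w]\cap\sigma_\beta^{-n}A)=\mu_p[w]\mu_p(A)$ for every Borel $A$ and every full $w$ of length $n$ via Dynkin's $\pi$--$\lambda$ theorem, and then approximating the invariant set by countable disjoint unions of full cylinders (Lemmas \ref{approximation} and \ref{countabledisjointunionoffull}); note also that $\sigma_\beta^{-1}\pi_\beta^{-1}A=\pi_\beta^{-1}T_\beta^{-1}A$ holds exactly, so no null-set bookkeeping via Proposition \ref{noatom} is needed at that point. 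For uniqueness, your Radon--Nikodym remark needs a word of care because $T_\beta$ is not invertible, so invariance of the density is not immediate; either invoke the standard fact that distinct ergodic invariant measures are mutually singular (hence equivalent ergodic invariant measures coincide), or argue as the paper does by applying the Birkhoff ergodic theorem to both $m_p$ and $m_p'$ and using $m_p\sim\nu_p\sim m_p'$ to find a common generic point.
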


The proof of this is based on the following lemmas.

\begin{lemma}[\cite{DM46}]\label{aeexist}
Let $(X,\cB,\mu)$ be a probability space and $T$ be a measurable transformation on $X$ satisfying $\mu(T^{-1}E)=0$ whenever $E\in\cB$ with $\mu(E)=0$. If there exists a constant $M$ such that for any $E\in\cB$ and any $n\ge1$,
$$\frac{1}{n}\sum_{k=0}^{n-1}\mu(T^{-k}E)\le M\mu(E),$$
then for any real integrable function $f$ on $X$, the limit
$$\lim_{n\to\infty}\frac{1}{n}\sum_{k=0}^{n-1}f(T^kx)$$
exists for $\mu$-almost every $x\in X$.
\end{lemma}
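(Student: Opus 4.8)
Dunford and Miller's theorem is an individual (pointwise) ergodic theorem for a transformation that need not preserve $\mu$; the natural plan is to recast the hypothesis in operator language and then run a Banach-principle argument. First I would introduce the Koopman (composition) operator $Pf=f\circ T$ and its Ces\`aro averages $A_n=\frac1n\sum_{k=0}^{n-1}P^k$, so that $A_nf(x)=\frac1n\sum_{k=0}^{n-1}f(T^kx)$ is exactly the average in the statement. Two features of $P$ are immediate: it is positive and satisfies $P\mathbf1=\mathbf1$, whence $\|A_nf\|_{L^\infty}\le\|f\|_{L^\infty}$. The non-singularity hypothesis says precisely that the pushforward $T_*\mu$ is absolutely continuous with respect to $\mu$, so the $A_n$ act on $L^1(\mu)$ and respect $\mu$-null sets. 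Finally, writing $\mu(T^{-k}E)=\int\mathbf1_E\circ T^k\,d\mu$ and approximating indicators by nonnegative simple functions, the Ces\`aro bound becomes $\int A_ng\,d\mu\le M\int g\,d\mu$ for every $g\ge0$; since $|A_nf|\le A_n|f|$ this upgrades to the uniform mean bound $\sup_n\|A_n\|_{L^1\to L^1}\le M$. Thus $P$ is a positive, mean-bounded operator with $A_n\mathbf1=\mathbf1$.

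With this reformulation, the plan is to prove the individual theorem through two inputs and the Banach principle: a weak-type maximal inequality, and almost-everywhere convergence on a dense subclass. Concretely, let $\cC:=\{f\in L^1(\mu):A_nf\text{ converges }\mu\text{-a.e.}\}$. The maximal inequality will guarantee that $\cC$ is closed in $L^1(\mu)$, and the dense-convergence step will show $\cC$ contains a dense subspace; together these give $\cC=L^1(\mu)$, which is the conclusion (real integrable $f$ being handled by linearity).

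The heart of the matter, and the step I expect to be the main obstacle, is the maximal inequality
$$\mu\{A^*f>\lambda\}\le\frac{C}{\lambda}\,\|f\|_{L^1},\qquad A^*f:=\sup_{n\ge1}A_n|f|.$$
For $f\ge0$ I would use $A_n\mathbf1=\mathbf1$ to reduce it to a Hopf-type statement: setting $g=f-\lambda\mathbf1$ and $S_ng=\sum_{k=0}^{n-1}P^kg=S_nf-\lambda n$, one has $\{A^*f>\lambda\}=\{\sup_{n\ge1}S_ng>0\}$. On the truncated set where $M_N:=\max_{0\le n\le N}S_ng>0$, the classical maximal computation gives $g\ge (I-P)M_N$ pointwise, and integrating should bound $\lambda\,\mu\{M_N>0\}$ by a constant times $\|f\|_{L^1}$ uniformly in $N$, after which $N\to\infty$ finishes. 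The difficulty is exactly that the naive integration produces the term $\int_X(P-I)M_N\,d\mu$, which is nonpositive only when $P$ is an $L^1$-contraction; here $P$ is merely mean-bounded, so one must absorb this term using the averaged bound $\int A_ju\,d\mu\le M\int u\,d\mu$ rather than a single-step contraction (equivalently, transfer the inequality from the genuine $L^1$-contraction transfer operator $L=P^*$). This delicate replacement of contractivity by Ces\`aro mean-boundedness is the essential point of Dunford--Miller.

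For the dense-convergence step I would produce explicit elements of $\cC$. Coboundaries $h-h\circ T$ with $h\in L^\infty$ lie in $\cC$ since $A_n(h-h\circ T)=\frac1n(h-h\circ T^n)\to0$ pointwise ($\|h\circ T^n\|_{L^\infty}\le\|h\|_{L^\infty}$), and every $T$-invariant bounded function (in particular every constant) is fixed by $A_n$, hence lies in $\cC$. It remains to see that invariant functions together with coboundaries are $L^1$-dense; this is where the mean ergodic theorem enters (for $f\in L^\infty$ the averages $A_nf$ are bounded in $L^\infty$, hence uniformly integrable on the finite space and relatively weakly compact in $L^1$, so $A_nf$ splits $f$ into a $T$-invariant part plus an element of the $L^1$-closure of the coboundaries). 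Since $\cC$ is a closed subspace containing both families, it contains their closed span, hence all of $L^\infty$, and therefore, $L^\infty$ being dense in $L^1(\mu)$, we get $\cC=L^1(\mu)$. A convenient alternative for this last step is to form, via a Banach limit, the finite $T$-invariant measure $m(E)=\operatorname{LIM}_n\frac1n\sum_{k=0}^{n-1}\mu(T^{-k}E)$, which satisfies $m\le M\mu$ (and is countably additive by the domination), and to use Birkhoff's theorem for $m$ to identify the limit; the only care needed is on the part where the invariant density vanishes, which the maximal inequality handles.
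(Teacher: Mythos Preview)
The paper does not prove this lemma at all: it is quoted verbatim from Dunford--Miller \cite{DM46} and used as a black box in the proof of Theorem~\ref{mp}. So there is no ``paper's own proof'' to compare against.

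Your outline is a reasonable and essentially correct sketch of how the Dunford--Miller theorem is actually proved. The reformulation in terms of the Koopman operator and Ces\`aro means is right, and you have correctly isolated the crux: the Hopf maximal inequality normally relies on $\int Pu\,d\mu\le\int u\,d\mu$, which fails here, and the whole content of \cite{DM46} is to show that the uniform bound $\int A_n u\,d\mu\le M\int u\,d\mu$ can substitute for single-step contractivity in deriving a weak-$(1,1)$ maximal estimate. Your dense-convergence step (invariant functions plus $L^\infty$-coboundaries, with the mean ergodic theorem supplying density) is standard and correct, and the alternative route via a Banach-limit invariant measure $m\le M\mu$ plus Birkhoff is exactly the shortcut many modern treatments take. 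One small point worth tightening: non-singularity alone only guarantees that $f\mapsto f\circ T$ respects $\mu$-null sets, not that it is bounded on $L^1(\mu)$; it is the hypothesis with $n=2$ that gives $\mu(T^{-1}E)\le(2M-1)\mu(E)$, hence $T_*\mu\le(2M-1)\mu$ and boundedness of each $P^k$. This does not affect your argument, since what you actually need is only that the $A_n$ themselves are uniformly bounded on $L^1$, which follows directly from the hypothesis as you wrote.
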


\begin{lemma}\label{01}
Let $1<\beta\le2$ and $0<p<1$.
\begin{itemize}
\item[(1)] If $B\in\cB(\Sigma_\beta)$ with $\sigma_\beta^{-1}B=B$, then $\mu_p(B)=0$ or $1$.
\item[(2)] If $B\in\cB[0,1)$ with $T_\beta^{-1}B=B$, then $\nu_p(B)=0$ or $1$.
\end{itemize}
\end{lemma}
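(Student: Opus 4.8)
The plan is to prove (1) as a zero--one law built on the product identity $\mu_p[vw]=\mu_p[v]\mu_p[w]$ valid for full $v$ (Lemma~\ref{quasibernoulli}(2)), and then to deduce (2) for free by transporting everything through the conjugacy $\pi_\beta$. So the work is essentially concentrated in part (1).

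The first step I would carry out is a product identity on full cylinders. Fix $B\in\cB(\Sigma_\beta)$ with $\sigma_\beta^{-1}B=B$, hence $\sigma_\beta^{-n}B=B$ for all $n\ge 1$. I claim that for every full $v\in\Sigma_\beta^*$ with $|v|=n$ we have $\mu_p([v]\cap B)=\mu_p[v]\,\mu_p(B)$. The point is that, by Proposition~\ref{charFULL}(3) and (6), the concatenation map $\Phi_v\colon y\mapsto vy$ is a homeomorphism of $\Sigma_\beta$ onto the clopen set $[v]$ whose inverse is $\sigma^n|_{[v]}$, so that $\Phi_v(B)=[v]\cap\sigma_\beta^{-n}B=[v]\cap B$. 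Then $E\mapsto\mu_p(\Phi_v(E))$ is a finite Borel measure on $\Sigma_\beta$ which, by Lemma~\ref{quasibernoulli}(2), agrees with $\mu_p[v]\,\mu_p(\cdot)$ on every cylinder $[w]$ (both sides equal $\mu_p[vw]$); since cylinders form a $\pi$-system generating $\cB(\Sigma_\beta)$, Dynkin's $\pi$--$\lambda$ theorem (Theorem~\ref{dynkin theorem}) — or the monotone class theorem (Theorem~\ref{monotone class theorem}) — forces the two measures to coincide on all of $\cB(\Sigma_\beta)$, and evaluating at $B$ gives the claim.

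The second step is approximation by finitely many full cylinders. Given $\epsilon>0$, Lemma~\ref{approximation} produces a finite disjoint union of cylinders within $\epsilon$ of $B$ in $\mu_p$-measure; splitting each of these cylinders into a countable disjoint union of full cylinders via Lemma~\ref{countabledisjointunionoffull} and keeping only finitely many summands, I obtain a disjoint union $A=\bigcup_{i=1}^{k}[v^{(i)}]$ with each $v^{(i)}$ full and $\mu_p(A\triangle B)<\epsilon$. Summing the identity of the first step over $i$ yields $\mu_p(A\cap B)=\sum_i\mu_p[v^{(i)}]\,\mu_p(B)=\mu_p(A)\,\mu_p(B)$, while $|\mu_p(A\cap B)-\mu_p(B)|\le\mu_p(A\triangle B)<\epsilon$ and $|\mu_p(A)-\mu_p(B)|<\epsilon$; combining these gives $|\mu_p(B)-\mu_p(B)^2|<2\epsilon$, so letting $\epsilon\to 0$ forces $\mu_p(B)=\mu_p(B)^2$, that is $\mu_p(B)\in\{0,1\}$. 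For part (2), since $\pi_\beta$ is a continuous bijection with $\pi_\beta\circ\sigma_\beta=T_\beta\circ\pi_\beta$ and $\nu_p=\pi_\beta\mu_p$, given $B\in\cB[0,1)$ with $T_\beta^{-1}B=B$ I set $B':=\pi_\beta^{-1}(B)\in\cB(\Sigma_\beta)$; then $\sigma_\beta^{-1}B'=(\pi_\beta\circ\sigma_\beta)^{-1}(B)=(T_\beta\circ\pi_\beta)^{-1}(B)=\pi_\beta^{-1}(T_\beta^{-1}B)=B'$, so (1) gives $\mu_p(B')\in\{0,1\}$, whence $\nu_p(B)=\mu_p(B')\in\{0,1\}$.

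The main obstacle I anticipate lies entirely in the second step: carefully justifying that the approximating finite union of \emph{arbitrary} cylinders may be replaced by a finite union of \emph{full} cylinders with the $\epsilon$-bound intact, which requires truncating the countable full-cylinder decompositions and absorbing the tails into $\epsilon$. A secondary technical point, handled by the $\pi$--$\lambda$/monotone-class argument in the first step, is making sure the product identity is available for Borel $B$ rather than merely for cylinders. Beyond these, everything is routine.
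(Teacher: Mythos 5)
Your proposal is correct and follows essentially the same route as the paper: the product identity $\mu_p([v]\cap\sigma_\beta^{-n}A)=\mu_p[v]\mu_p(A)$ for full $v$ via Lemma~\ref{quasibernoulli}(2) extended by Dynkin's $\pi$--$\lambda$ theorem, approximation of the invariant set by disjoint unions of full cylinders via Lemmas~\ref{approximation} and~\ref{countabledisjointunionoffull}, the conclusion $\mu_p(B)=\mu_p(B)^2$, and part (2) by conjugation through $\pi_\beta$. The only (harmless) deviations are that you approximate $B$ rather than $B^c$ and truncate to finitely many full cylinders, whereas the paper keeps the countable union and uses countable additivity directly, so your anticipated truncation step is not even needed.
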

\begin{proof}
\item(1) Let $\cF:=\{w\in\Sigma_\beta^*:w\text{ is full}\}$.
\begin{itemize}
\item[\textcircled{\footnotesize{$1$}}] Let $w\in\cF$ with $|w|=n$. We prove $\mu_p([w]\cap\sigma_\beta^{-n}A)=\mu_p[w]\mu_p(A)$ for any $A\in\cB(\Sigma_\beta)$ as below.
    \newline Since $w$ is full and $[ww']=[w]\cap\sigma_\beta^{-n}[w']$ for any $w'\in\Sigma_\beta^*$, we get
    $$\mu_p([w]\cap\sigma_\beta^{-n}[w'])=\mu_p[ww']\xlongequal[]{\text{by Lemma \ref{quasibernoulli} (2)}}\mu_p[w]\mu_p[w'].$$
    Let $\cC:=\{[w']:w'\in\Sigma_\beta^*\}\cup\{\emptyset\}$ and $\cG:=\{A\in\cB(\Sigma_\beta):\mu_p([w]\cap\sigma_\beta^{-n}A)=\mu_p[w]\mu_p(A)\}$. Then we have already got $\cC\subset\cG\subset\cB(\Sigma_\beta)$. Since $\cC$ is a $\pi$-system, $\cG$ is a $\lambda$-system and $\cC$ generates $\cB(\Sigma_\beta)$, by Dynkin's $\pi$-$\lambda$ Theorem \ref{dynkin theorem}, we get $\cG=\cB(\Sigma_\beta)$.
\item[\textcircled{\footnotesize{$2$}}] We use $B^c$ to denote the complement of $B$ in $\Sigma_\beta$. For any $\delta>0$, by Lemma \ref{approximation} and Lemma \ref{countabledisjointunionoffull}, there exists a countable disjoint union of full cylinders $E_\delta=\bigcup_i[w^{(i)}]$ with $\{w^{(i)}\}\subset\cF$ such that $\mu_p(B^c\triangle E_\delta)<\delta$.
\item[\textcircled{\footnotesize{$3$}}] Let $B\in\cB(\Sigma_\beta)$ with $\sigma_\beta^{-1}B=B$. Then $B=\sigma_\beta^{-n}B$ and by \textcircled{\footnotesize{$1$}} we get
    $$\mu_p(B\cap[w])=\mu_p(\sigma_\beta^{-n}B\cap[w])=\mu_p(B)\mu_p[w]$$
    for any $w\in\cF$. Thus
    $$\mu_p(B\cap E_\delta)=\mu_p(B\cap\bigcup_i[w^{(i)}])=\sum_i\mu_p(B\cap[w^{(i)}])=\sum_i\mu_p(B)\mu_p[w^{(i)}]=\mu_p(B)\mu_p(E_\delta).$$
    Let $a=\mu_p((B\cup E_\delta)^c)$, $b=\mu_p(B\cap E_\delta)$, $c=\mu_p(B\setminus E_\delta)$ and $d=\mu_p(E_\delta\setminus B)$. Then
    $$b=(b+c)(b+d),\quad a+b<\delta\text{  (by \textcircled{\footnotesize{$2$}})}\quad\text{and}\quad a+b+c+d=1.$$
    By
    $$(b+c)(a+d-\delta)\le(b+c)(b+d)=b<\delta,$$
    we get
    $$(b+c)(a+d)<(1+b+c)\delta\le2\delta$$
    which implies $\mu_p(B)\mu_p(B^c)\le2\delta$ for any $\delta>0$. Therefore $\mu_p(B)=0$ or $\mu_p(B^c)=0$.
\end{itemize}
\item(2) follows from (1). In fact, let $B\in\cB[0,1)$ with $T_\beta^{-1}B=B$. By $\sigma_\beta^{-1}\pi_\beta^{-1}B=\pi_\beta^{-1}T_\beta^{-1}B=\pi_\beta^{-1}B\in\cB(\Sigma_\beta)$ and (1), we get $\mu_p(\pi_\beta^{-1}B)=0$ or $1$, i.e., $\nu_p(B)=0$ or $1$.
\end{proof}

\begin{proof}[Proof of Theorem \ref{mp}]
\item(1) For any $n\in\N$ and $B\in\cB[0,1)$, define
$$m_p^n(B):=\frac{1}{n}\sum_{k=0}^{n-1}\nu_p(T_\beta^{-k}B).$$
Then $m_p^n$ is a probability measure on $([0,1),\cB[0,1))$. By Corollary \ref{nustrong}, there exists $c>0$ such that
\begin{align}\label{equi}c^{-1}\nu_p(B)\le m_p^n(B)\le c\nu_p(B)\quad\text{for any }B\in\cB[0,1)\text{ and  }n\in\N.\end{align}
\item(2) For any $B\in\cB[0,1)$, prove that $\lim_{n\to\infty}m_p^n(B)$ exists. In fact,
\begin{eqnarray*}
\lim_{n\to\infty}m_p^n(B)=\lim_{n\to\infty}\frac{1}{n}\sum_{k=0}^{n-1}\int\mathbbm{1}_{T_\beta^{-k}B}d\nu_p&=&\lim_{n\to\infty}\int\frac{1}{n}\sum_{k=0}^{n-1}\mathbbm{1}_B(T_\beta^kx)d\nu_p(x)\\
&=&\int\lim_{n\to\infty}\frac{1}{n}\sum_{k=0}^{n-1}\mathbbm{1}_B(T_\beta^kx)d\nu_p(x).
\end{eqnarray*}

The last equality follows from Dominate Convergence Theorem where the $\nu_p$-a.e. existence of $\lim\limits_{n\to\infty}\frac{1}{n}\sum\limits_{k=0}^{n-1}\mathbbm{1}_B(T_\beta^kx)$ follows from Lemma \ref{aeexist}, the strongly quasi-invariance of $\nu_p$ and (1).
\item(3) For any $B\in\cB[0,1)$, define $m_p(B):=\lim_{n\to\infty}m_p^n(B)$. Then $m_p$ is a probability measure on $([0,1),\cB[0,1))$.
\item(4) $m_p\sim\nu_p$ on $\cB[0,1)$ follows from \eqref{equi} and the definition of $m_p$.
\item(5) Prove that $m_p$ is $T_\beta$-invariant.
\newline For any $B\in\cB[0,1)$ and $n\in\N$, we have
    $$m_p^n(T_\beta^{-1}B)=\frac{1}{n}\sum_{k=1}^n\nu_p(T_\beta^{-k}B)=\frac{n+1}{n}m_p^{n+1}(B)-\frac{\nu_p(B)}{n}.$$
    As $n\to\infty$, we get $m_p(T_\beta^{-1}B)=m_p(B)$.
\item(6) Prove that $([0,1),\cB[0,1),m_p,T_\beta)$ is ergodic.
\newline Let $B\in\cB[0,1)$ such that $T_\beta^{-1}B=B$. Then by Lemma \ref{01} (2), we get $\nu_p(B)=0$ or $\nu_p(B^c)=0$ which implies $m_p(B)=0$ or $m_p(B^c)=0$ since $m_p\sim\nu_p$. Noting that $m_p$ is $T_\beta$-invariant, we know that $m_p$ is ergodic with respect to $T_\beta$.
\item(7) Prove that such $m_p$ is unique on $\cB[0,1)$.
\newline Let $m_p'$ be a $T_\beta$-ergodic probability measure on $([0,1),\cB[0,1))$ equivalent to $\nu_p$. Then for any $B\in\cB[0,1)$, by the Birkhoff Ergodic Theorem, we get
    $$m_p(B)=\int\mathbbm{1}_Bdm_p=\lim_{n\to\infty}\frac{1}{n}\sum_{k=0}^{n-1}\mathbbm{1}_B(T_\beta^kx)\quad\text{for }m_p\text{-a.e. }x\in[0,1)$$
    and
    $$m_p'(B)=\int\mathbbm{1}_Bdm_p'=\lim_{n\to\infty}\frac{1}{n}\sum_{k=0}^{n-1}\mathbbm{1}_B(T_\beta^kx)\quad\text{for }m_p'\text{-a.e. }x\in[0,1).$$
    Since $m_p\sim\nu_p\sim m_p'$, there exists $x\in[0,1)$ such that $m_p(B)=\lim_{n\to\infty}\frac{1}{n}\sum_{k=0}^{n-1}\mathbbm{1}_B(T_\beta^k x)=m_p'(B)$.
\end{proof}

\section{Modified lower local dimension related to $\beta$-expansions}\label{sec:locdim}

Let $\nu$ be a finite measure on $\R^n$. The \textit{lower local dimension} of $\nu$ at $x\in\R^n$ is defined by
$$\underline{\dim}_{loc}\nu(x):=\varliminf_{r\to 0}\frac{\log\nu(B(x,r))}{\log r},$$
where $B(x,r)$ is the closed ball centered on $x$ with radius $r$. Theoretically, we can use the lower local dimension to estimate the upper and lower bounds of the Hausdorff dimension (see \cite{Fal90} for definition) by the following proposition.
\begin{proposition}[\cite{Fal97} Proposition 2.3]\label{oringin}
Let $s\ge0$, $E\subset\R^n$ be a Borel set and $\nu$ be a finite Borel measure on $\R^n$.
\begin{itemize}
\item[\emph{(1)}] If $\underline{\dim}_{loc}\nu(x)\le s$ for all $x\in E$ then $\dim_HE\le s$.
\item[\emph{(2)}] If $\underline{\dim}_{loc}\nu(x)\ge s$ for all $x\in E$ and $\nu(E)>0$ then $\dim_HE\ge s$.
\end{itemize}
\end{proposition}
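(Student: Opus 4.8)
The plan is to handle the two parts separately: part (2) is the mass distribution principle, and part (1) follows from a covering argument built directly on the definition of lower local dimension.

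For part (2), I would fix $t<s$ and stratify $E$ by the increasing sets
$$E_k:=\{x\in E:\nu(B(x,r))\le r^t\text{ for all }0<r\le 1/k\},\qquad k\in\N.$$
Since $\varliminf_{r\to0}\log\nu(B(x,r))/\log r\ge s>t$ at every $x\in E$, one has $E=\bigcup_k E_k$, and as $\nu(E)>0$ there is some $k_0$ with $\nu(E_{k_0})>0$. For any countable cover $\{U_i\}$ of $E_{k_0}$ by sets of diameter at most $1/k_0$, picking $x_i\in U_i\cap E_{k_0}$ whenever this intersection is nonempty gives $U_i\subset B(x_i,\diam U_i)$ and hence $\nu(U_i)\le(\diam U_i)^t$, so that $\sum_i(\diam U_i)^t\ge\nu(E_{k_0})>0$. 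This yields $\mathcal{H}^t(E_{k_0})\ge\nu(E_{k_0})>0$, so $\dim_H E\ge\dim_H E_{k_0}\ge t$, and letting $t\uparrow s$ finishes the part. One may assume $s>0$, so that $\nu$ has no atom on $E$ and single-point covering sets cause no trouble; the case $s=0$ is vacuous.

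For part (1), I would fix $t>s$; then for each $x\in E$ the inequality $\varliminf_{r\to0}\log\nu(B(x,r))/\log r\le s<t$ supplies arbitrarily small radii $r>0$ with $\nu(B(x,r))\ge r^t$, so for every $\delta>0$ the family $\mathcal{V}_\delta:=\{B(x,r):x\in E,\ 0<r<\delta,\ \nu(B(x,r))\ge r^t\}$ covers $E$ and has uniformly bounded radii. Applying the $5r$-covering (Vitali) lemma, I would extract a countable pairwise disjoint subfamily $\{B(x_i,r_i)\}\subset\mathcal{V}_\delta$ with $E\subset\bigcup_iB(x_i,5r_i)$, and then
$$\sum_i\big(\diam B(x_i,5r_i)\big)^t=10^t\sum_i r_i^t\le 10^t\sum_i\nu(B(x_i,r_i))\le 10^t\,\nu(\R^n)<\infty$$
by disjointness. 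Since the covering sets have diameter $<10\delta$ and $\delta$ is arbitrary, this gives $\mathcal{H}^t(E)\le 10^t\nu(\R^n)<\infty$, hence $\dim_H E\le t$; letting $t\downarrow s$ gives $\dim_H E\le s$.

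The only nonelementary ingredient is the $5r$-covering lemma used in part (1), which I would quote from a standard reference rather than reprove; everything else is bookkeeping with the definitions of $\mathcal{H}^t$ and $\dim_H$. The points that need a little care are: that the diameter cutoff $1/k_0$ in part (2) does not affect the value of $\mathcal{H}^t$ (since $\mathcal{H}^t=\sup_{\delta>0}\mathcal{H}^t_\delta$); the reduction to $s>0$ to dispose of atoms and degenerate covering sets; and checking that $\mathcal{V}_\delta$ genuinely covers $E$, which is exactly what the hypothesis $\underline{\dim}_{loc}\nu(x)\le s$ guarantees by forcing the admissible radii down to $0$.
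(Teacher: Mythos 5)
The paper offers no proof of this proposition at all---it is quoted verbatim from Falconer's \emph{Techniques in Fractal Geometry} (Proposition 2.3)---and your argument is correct and is essentially that standard proof: the stratification $E_k=\{x\in E:\nu(B(x,r))\le r^t,\ 0<r\le 1/k\}$ together with the mass distribution estimate for the lower bound, and a Vitali $5r$-covering argument for the upper bound, with the cases $s=0$, atoms, and degenerate covering sets correctly disposed of. The only point worth a word is that the sets $E_k$ need not be Borel, so one should phrase the step ``$\nu(E)>0$ implies $\nu(E_{k_0})>0$ for some $k_0$'' in terms of the induced outer measure (finite Borel measures on $\R^n$ are Borel regular, so continuity from below along the increasing sequence $E_k$ still holds), after which everything you wrote goes through.
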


But in the definition of the lower local dimension, the Bernoulli-type measure of a ball $\nu_p(B(x,r))$ is difficult to estimate. Therefore, we use the measure of a cylinder $\nu(I_n(x))$ instead of $\nu_p(B(x,r))$ to define the \textit{modified lower local dimension related to $\beta$-expansions} of a measure at a point.

\begin{definition}
Let $\beta>1$ and $\nu$ be a finite measure on $[0,1)$. The \textit{modified lower local dimension} of $\nu$ at $x\in[0,1)$ is defined by
$$\underline{\dim}_{loc}^\beta\nu(x):=\varliminf_{n\to\infty}\frac{\log\nu(I_n(x))}{\log|I_n(x)|}$$
where $I_n(x)$ is the cylinder of order $n$ containing $x$.
\end{definition}

Combining Proposition \ref{oringin} (1) and the following proposition, we can estimate the upper bound of the Hausdorff dimension by the modified lower local dimension.

\begin{proposition}\label{relation}
Let $\beta>1$ and $\nu$ be a finite measure on $[0,1)$. Then for any $x\in[0,1)$, $$\underline{\dim}_{loc}^\beta(\nu,x)\ge\underline{\dim}_{loc}(\nu,x).$$
\end{proposition}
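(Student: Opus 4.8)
The plan is to compare the two liminfs by relating the cylinder $I_n(x)$ to a ball centered at $x$. Fix $x\in[0,1)$ and write $r_n:=\beta^{-n}$. The key geometric input is Proposition \ref{cylinderlength}-type behaviour, but note we only have $\beta>1$ here and cannot assume $\beta\in A_0$; however, we always have the easy upper bound $|I_n(x)|\le\beta^{-n}$ (this is part of Proposition \ref{charFULL}, and in general holds by the elementary estimate that $I_n(x)$ is an interval whose endpoints differ by at most $\beta^{-n}$ — more precisely, truncating the $\beta$-expansion at level $n$ changes the value by at most $\sum_{j>n}(\lfloor\beta\rfloor)\beta^{-j}$, but the sharp and standard fact is $|I_n(x)|\le\beta^{-n}$). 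So $\log|I_n(x)|\le\log\beta^{-n}=-n\log\beta<0$, hence $\log|I_n(x)|\le -n\log\beta$. I would also use the trivial inclusion $I_n(x)\subset B(x,|I_n(x)|)\subset B(x,\beta^{-n})$, since $x\in I_n(x)$ and $I_n(x)$ is an interval of length $\le\beta^{-n}$ containing $x$, so every point of $I_n(x)$ is within $\beta^{-n}$ of $x$.

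First I would fix $x$ and set $s:=\underline{\dim}_{loc}(\nu,x)=\varliminf_{r\to0}\frac{\log\nu(B(x,r))}{\log r}$; if $s=0$ there is nothing to prove since $\underline{\dim}_{loc}^\beta$ is always $\ge 0$, so assume $s>0$ and fix any $t<s$. By definition of the liminf, there is $r_0>0$ such that $\nu(B(x,r))\le r^{t}$ for all $0<r\le r_0$. Now for all $n$ large enough that $\beta^{-n}\le r_0$, using $I_n(x)\subset B(x,\beta^{-n})$,
$$
\nu(I_n(x))\le\nu(B(x,\beta^{-n}))\le(\beta^{-n})^{t}=\beta^{-nt}.
$$
Taking logarithms (all quantities positive, so the inequality reverses upon dividing by the negative number $\log|I_n(x)|$), and using $\log|I_n(x)|\le -n\log\beta$, i.e. $\frac{1}{-\log|I_n(x)|}\le\frac{1}{n\log\beta}$,
$$
\frac{\log\nu(I_n(x))}{\log|I_n(x)|}=\frac{\log\nu(I_n(x))}{-(-\log|I_n(x)|)}\ge\frac{-nt\log\beta}{-\,n\log\beta}=t,
$$
where in the middle step I used $\log\nu(I_n(x))\le -nt\log\beta\le 0$ together with $-\log|I_n(x)|\ge n\log\beta>0$. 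Hence $\frac{\log\nu(I_n(x))}{\log|I_n(x)|}\ge t$ for all large $n$, so $\underline{\dim}_{loc}^\beta(\nu,x)=\varliminf_{n\to\infty}\frac{\log\nu(I_n(x))}{\log|I_n(x)|}\ge t$.

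Since $t<s$ was arbitrary, letting $t\uparrow s$ gives $\underline{\dim}_{loc}^\beta(\nu,x)\ge s=\underline{\dim}_{loc}(\nu,x)$, which is the claim. I expect the only genuine subtlety to be bookkeeping with the signs of logarithms (all the relevant quantities $\nu(I_n(x))$, $|I_n(x)|$, $\beta^{-n}$ are in $(0,1]$, so their logs are $\le 0$, and dividing an inequality between two negative numbers by one of them flips it), together with making sure the degenerate cases $s=0$ or $\nu(I_n(x))=0$ are handled — in the latter case the ratio is $+\infty$ by convention and the inequality is trivially true for that $n$. No deeper structure of the $\beta$-shift is needed here; the essential facts are merely $x\in I_n(x)$, $|I_n(x)|\le\beta^{-n}$, and the monotonicity of $\nu$.
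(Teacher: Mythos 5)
Your overall strategy (compare the cylinder with a ball and transfer the liminf) is the right one, but the key displayed inequality does not follow from the two facts you invoke, and as set up the argument breaks exactly for the $\beta$ the proposition is meant to cover. You bound the numerator via the ball of radius $\beta^{-n}$, getting $\nu(I_n(x))\le\beta^{-nt}$, but the denominator of the modified local dimension is $\log|I_n(x)|$, and for general $\beta>1$ there is no lower bound $|I_n(x)|\ge c\beta^{-n}$: by Proposition \ref{cylinderlength} such a bound is equivalent to $\beta\in A_0$, and non-full cylinders can be far shorter than $\beta^{-n}$. Concretely, from $-\log\nu(I_n(x))\ge nt\log\beta$ and $-\log|I_n(x)|\ge n\log\beta$ you cannot conclude $\frac{-\log\nu(I_n(x))}{-\log|I_n(x)|}\ge t$: both are lower bounds, and the inequality $\frac{1}{-\log|I_n(x)|}\le\frac{1}{n\log\beta}$ that you quote points in the wrong direction --- it makes the fraction smaller, not larger. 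If, say, $\nu(I_n(x))$ were of order $\beta^{-nt}$ while $|I_n(x)|$ were of order $\beta^{-100n}$, the two facts you use would be satisfied yet the ratio would be about $t/100<t$.

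The fix is one line and is exactly the paper's proof: apply your hypothesis at the radius $r_n:=|I_n(x)|$ instead of $\beta^{-n}$. Since $x\in I_n(x)$ and $|I_n(x)|\le\beta^{-n}\to0$, for large $n$ you have $r_n\le r_0$ and $I_n(x)\subset B(x,r_n)$, hence $\nu(I_n(x))\le\nu(B(x,r_n))\le r_n^{\,t}=|I_n(x)|^t$, i.e.\ $\log\nu(I_n(x))\le t\log|I_n(x)|$; dividing by $\log|I_n(x)|<0$ gives $\frac{\log\nu(I_n(x))}{\log|I_n(x)|}\ge t$ directly, with numerator and denominator now controlled at the same scale. (Equivalently, as in the paper, note $\frac{\log\nu(I_n(x))}{\log|I_n(x)|}\ge\frac{\log\nu(B(x,r_n))}{\log r_n}$ and take the liminf along the sequence $r_n\to0$, which avoids the $t<s$ bookkeeping altogether.) The remaining points in your write-up --- the $s=0$ case, the $\nu(I_n(x))=0$ convention, the sign bookkeeping --- are fine.
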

\begin{proof}
For any $x\in[0,1)$ and $n\in\N$. Let $r_n:=|I_n(x)|$, then $I_n(x)\subset B(x,r_n)$, $\nu(I_n(x))\le\nu(B(x,r_n))$ and $-\log\nu(I_n(x))\ge-\log\nu(B(x,r_n))$. We get $$\frac{-\log\nu(I_n(x))}{-\log|I_n(x)|}\ge\frac{-\log\nu(B(x,r_n))}{-\log r_n}.$$
Therefore $$\varliminf_{n\to\infty}\frac{\log\nu(I_n(x))}{\log|I_n(x)|}\ge\varliminf_{n\to\infty}\frac{\log\nu(B(x,r_n))}{\log r_n}\ge\underline{\dim}_{loc}\nu(x).$$
\end{proof}

\begin{remark}\label{wrongexample}
The reverse inequality in Proposition \ref{relation}, i.e., $\underline{\dim}_{loc}^\beta(\nu,x)\le\underline{\dim}_{loc}(\nu,x)$ is not always true. For example, let $\beta$ be the golden ratio $(\sqrt{5}+1)/2$, $x=\beta^{-1}$ and $\nu=\nu_p$ be the $(p,1-p)$ Bernoulli-type measure with $0<p<1/2$. For any $n\in\N$, let $r_n=|I_n(x)|$ and $J_n$ be the left consecutive cylinder of $I_n(x)$ with the same  order $n$. When $n\ge2$, we have $r_n=\beta^{-n}\ge|J_n|$ and $B(x,r_n)\supset J_n$. Then $\nu_p(B(x,r_n))\ge\nu_p(J_n)\ge p(1-p)^{n-1}$ and $\nu_p(I_n(x))=(1-p)p^{n-2}$ which implies
$$\underline{\dim}_{loc}^\beta\nu_p(x)=\varliminf_{n\to\infty}\frac{\log(1-p)p^{n-2}}{\log\beta^{-n}}=\frac{-\log p}{\log\beta}$$
and
$$\underline{\dim}_{loc}\nu_p(x)\le\varliminf_{n\to\infty}\frac{\log\nu_p(B(x,r_n))}{\log r_n}\le\varliminf_{n\to\infty}\frac{\log p(1-p)^{n-1}}{\log\beta^{-n}}=\frac{-\log(1-p)}{\log\beta}.$$
When $0<p<1/2$, we have $\underline{\dim}_{loc}^\beta(\nu_p,x)>\underline{\dim}_{loc}(\nu_p,x)$.
\end{remark}

Though the reverse inequality in Proposition \ref{relation} is not always true, we are going to establish the following theorem for estimating both of the upper and lower bounds of the Hausdorff dimension by the modified lower local dimension of a finite measure.

\begin{theorem}\label{dimestimate}
Let $\beta>1$, $s\ge0$, $E\subset[0,1)$ be a Borel set and $\nu$ be a finite Borel measure on $[0,1)$.
\begin{itemize}
\item[\emph{(1)}] If $\underline{\dim}_{loc}^\beta\nu(x)\le s$ for all $x\in E$, then $\dim_HE\le s$.
\item[\emph{(2)}] If $\underline{\dim}_{loc}^\beta\nu(x)\ge s$ for all $x\in E$ and $\nu(E)>0$, then $\dim_HE\ge s$.
\end{itemize}
\end{theorem}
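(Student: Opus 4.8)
The plan is to reduce both inequalities to the classical mass‑distribution criteria for the \emph{ordinary} lower local dimension in Proposition \ref{oringin}, using that the cylinders of $[0,1)$ form a nested family of intervals and that, by the covering property in Proposition \ref{coveringproperties}, a ball of radius $\beta^{-n}$ meets only $O(n)$ cylinders of order $n$. Note that Proposition \ref{relation} by itself does not suffice: it gives $\underline{\dim}_{loc}^\beta\nu\ge\underline{\dim}_{loc}\nu$, which is the wrong direction for the lower bound in (2), so a genuine covering argument is needed there.

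For part (1) I would argue directly, without even invoking Proposition \ref{oringin}. Fix $s'>s$. For each $x\in E$, the assumption $\varliminf_{n\to\infty}\frac{\log\nu(I_n(x))}{\log|I_n(x)|}\le s<s'$ yields infinitely many $n$ with $\frac{\log\nu(I_n(x))}{\log|I_n(x)|}<s'$; since $\log|I_n(x)|<0$ this forces $\nu(I_n(x))>0$ and $\nu(I_n(x))>|I_n(x)|^{s'}$. Hence for any $\delta>0$ the cylinders $I_n(x)$ ($x\in E$) with $|I_n(x)|<\delta$ and $\nu(I_n(x))>|I_n(x)|^{s'}$ cover $E$ by intervals of diameter below $\delta$. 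Because any two cylinders are nested or disjoint, for each $x\in E$ there is a unique member of this cover containing $x$ of minimal order, and the collection of these is a countable, pairwise disjoint subcover $\{I^{(j)}\}_j$ of $E$. Then disjointness together with finiteness of $\nu$ gives $\sum_j(\diam I^{(j)})^{s'}=\sum_j|I^{(j)}|^{s'}<\sum_j\nu(I^{(j)})\le\nu([0,1))<\infty$, so $\cH^{s'}(E)<\infty$, i.e. $\dim_HE\le s'$; letting $s'\downarrow s$ concludes part (1).

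For part (2) I would first localize. We may assume $s>0$ and fix $0<t<s$. Since $\varliminf_{n\to\infty}\frac{\log\nu(I_n(x))}{\log|I_n(x)|}\ge s>t$ for each $x\in E$, we have $E=\bigcup_{k}E_k$ where $E_k:=\{x\in E:\nu(I_n(x))\le|I_n(x)|^t\text{ for all }n\ge k\}$ is Borel and $E_k$ increases with $k$; as $\nu(E)>0$, pick $k$ with $\nu(E_k)>0$ and set $\nu_k:=\nu|_{E_k}$, a finite Borel measure with $\nu_k(E_k)>0$. The crux is to show $\underline{\dim}_{loc}\nu_k(x)\ge t$ for all $x\in E_k$: for small $r$, choose $n\ge k$ with $\beta^{-n-1}\le r<\beta^{-n}$; by Proposition \ref{coveringproperties}, $B(x,r)\cap[0,1)\subset B(x,\beta^{-n})\cap[0,1)$ is covered by at most $4(n+1)$ cylinders of order $n$, and each such cylinder either misses $E_k$ (hence has zero $\nu_k$-mass) or equals $I_n(y)$ for some $y\in E_k$ with $n\ge k$, hence has $\nu_k$-mass at most $|I_n(y)|^t\le\beta^{-nt}$. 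Therefore $\nu_k(B(x,r))\le 4(n+1)\beta^{-nt}$; since $\beta^{-n}\le\beta r$ and $n+1=O(\log(1/r))$, taking logarithms and dividing by $\log r$ the factor $4(n+1)$ is negligible in the $\log/\log$ ratio, giving $\varliminf_{r\to0}\frac{\log\nu_k(B(x,r))}{\log r}\ge t$. Proposition \ref{oringin}(2) applied to $\nu_k$ and $E_k$ then gives $\dim_HE\ge\dim_HE_k\ge t$, and letting $t\uparrow s$ finishes.

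The main obstacle is this ball‑versus‑cylinder passage in part (2): the modified local dimension only controls $\nu$ on cylinders, while Proposition \ref{oringin} needs control on balls. The fix is to pass to a positive‑measure subset $E_k$ on which the cylinder bound holds uniformly from scale $k$ on — so that, crucially, \emph{every} order‑$n$ cylinder meeting $E_k$, not just $I_n(x)$ itself, is controlled — and then to check that the combinatorial factor $4(n+1)$ from Proposition \ref{coveringproperties} is negligible in the $\log/\log$ limit. Part (1) is comparatively soft; the only delicate point there is extracting a countable disjoint subcover, which is painless thanks to the tree structure of cylinders.
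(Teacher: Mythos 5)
Your proof is correct, but it takes a genuinely different route from the paper's. For part (1) the paper is a one-liner: Proposition \ref{relation} gives $\underline{\dim}_{loc}\nu(x)\le\underline{\dim}_{loc}^\beta\nu(x)\le s$ on $E$, and Proposition \ref{oringin}(1) concludes; note that \ref{relation} does point the right way for the upper bound (it is only for part (2) that it is useless), so your direct argument --- covering $E$ by cylinders with $\nu(I)>|I|^{s'}$ and using the net structure to extract a countable disjoint subcover, bounding $\cH^{s'}_\delta(E)$ by $\nu([0,1))$ --- is valid but does more work than needed. For part (2) the paper introduces the cylinder-net Hausdorff measure $\cH^{s,\beta}$, proves a net mass-distribution principle (Lemma \ref{netmeasurelowerbound}) and the comparison $\cH^{s,\beta}(E)\le\cH^{s-\epsilon}(E)$ (Lemma \ref{netmeasure}), the latter absorbing the $O(n)$ multiplicity from Proposition \ref{coveringproperties} via the $\epsilon$-loss in the exponent; you instead localize to $E_k$, where the cylinder estimate holds uniformly from order $k$ on, restrict $\nu$ to $E_k$, and verify the ordinary lower local dimension bound for $\nu|_{E_k}$ by comparing a ball with the at most $4(n+1)$ order-$n$ cylinders it meets (each one meeting $E_k$ being $I_n(y)$ for some $y\in E_k$, hence of $\nu$-mass at most $\beta^{-nt}$), the polynomial factor vanishing in the $\log/\log$ limit, and then apply the classical Proposition \ref{oringin}(2). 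Both arguments use Proposition \ref{coveringproperties} at exactly the same pressure point (balls versus cylinders), and your localization correctly handles the fact that the hypothesis only controls cylinders $I_n(x)$ with $x\in E$. What each buys: the paper's net-measure route is slightly more quantitative (it yields $\cH^{t-\epsilon}(E)\ge c^{-1}\nu(E)$ and, as remarked there, recovers the modified mass distribution principle of \cite{BuWa14}), while yours avoids the auxiliary measure $\cH^{s,\beta}$ altogether and reduces everything to Falconer's classical criteria via restriction of the measure.
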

\begin{proof} (1) follows from Proposition \ref{oringin} (1) and Proposition \ref{relation}.
\newline(2) follows from the following Lemma \ref{measurelowerbound}. In fact, if $s=0$, $\dim_H E\ge s$ is obvious. If $s>0$, let $0<t<s$. For any $x\in E$, by $\varliminf\limits_{n\to\infty}\frac{\log\nu(I_n(x))}{\log|I_n(x)|}\ge s>t$, there exists $N\in\N$ such that any $n>N$ implies $\frac{\log\nu(I_n(x))}{\log|I_n(x)|}>t$ and $\nu(I_n(x))<|I_n(x)|^t$. So $\varlimsup\limits_{n\to\infty}\frac{\nu(I_n(x))}{|I_n(x)|^t}\le1<2$. For any $0<\epsilon<t$, by Lemma \ref{measurelowerbound}, we get $\cH^{t-\epsilon}(E)>0$ (where $\cH^s(E)$ denotes the classical $s$-dimension Hausdorff measure of a set $E$.) and then $\dim_H E\ge t-\epsilon$. So $\dim_HE\ge t$ for any $t<s$. Therefore $\dim_HE\ge s$.
\end{proof}

\begin{remark}
The statement (2) in Theorem \ref{dimestimate} obviously implies the Proposition 1.3 in \cite{BuWa14} which is called the modified mass distribution principle.
\end{remark}

\begin{lemma}\label{measurelowerbound}
Let $\beta>1, s>0, c>0$, $E\subset [0,1)$ be a Borel set and $\nu$ be a finite Borel measure on $[0,1)$. If $\varlimsup\limits_{n\to\infty}\frac{\nu(I_n(x))}{|I_n(x)|^s}< c$ for all $ x\in E$, then for any $0<\epsilon<s$, $\cH^{s-\epsilon}(E)\ge c^{-1}\nu(E)$.
\end{lemma}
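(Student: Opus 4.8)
The plan is to prove a cylinder-adapted mass distribution principle: the hypothesis only bounds $\nu$ on the cylinders $I_n(x)$, and Proposition \ref{coveringproperties} is used to convert this into a bound on $\nu$ of arbitrary small sets, at the cost of the polynomial factor $4(n+1)$, which is exactly what forces the exponent loss from $s$ to $s-\epsilon$.

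Fix $\epsilon\in(0,s)$. First I would localise the hypothesis: for $k\in\N$ put
$$E_k:=\big\{x\in E:\nu(I_n(x))<c\,|I_n(x)|^{s}\text{ for all }n\ge k\big\}.$$
Each $E_k$ is Borel, since for fixed $n$ the cylinder $I_n(x)$ depends only on which of the countably many order-$n$ cylinders contains $x$, so $\{x:\nu(I_n(x))<c|I_n(x)|^s\}$ is a countable union of cylinders; moreover $E_k\subseteq E_{k+1}$ and $\bigcup_{k}E_k=E$, because $\varlimsup_{n}\nu(I_n(x))/|I_n(x)|^s<c$ means the inequality holds for all large $n$. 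As $\nu$ is finite, $\nu(E_k)\uparrow\nu(E)$, and since $\cH^{s-\epsilon}$ is monotone it suffices to prove $\cH^{s-\epsilon}(E_k)\ge c^{-1}\nu(E_k)$ for each fixed $k$; then $\cH^{s-\epsilon}(E)\ge\cH^{s-\epsilon}(E_k)\ge c^{-1}\nu(E_k)\to c^{-1}\nu(E)$ as $k\to\infty$.

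Now fix $k$. Because $\beta>1$, the quantity $4(n+1)\beta^{s}\beta^{-n\epsilon}$ tends to $0$, so I can choose an integer $N\ge k$ with $4(n+1)\beta^{s}\beta^{-n\epsilon}\le1$ for all $n\ge N$, and set $\delta:=\beta^{-N-1}$. Let $\{U_i\}$ be any countable cover of $E_k$ with $\diam U_i\le\delta$. Discarding sets disjoint from $E_k$, and observing that a point of $E$ cannot be a $\nu$-atom (an atom would make $\nu(I_n(x))/|I_n(x)|^s\to\infty$ since $|I_n(x)|\le\beta^{-n}\to0$), so that the countably many singleton $U_i$'s meeting $E_k$ together cover a $\nu$-null subset of $E_k$ and contribute $0$ to $\sum_i(\diam U_i)^{s-\epsilon}$, I may assume each $U_i$ meets $E_k$ and has $0<\diam U_i\le\delta$. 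For each $i$ pick $x_i\in U_i\cap E_k$ and let $n_i$ be the unique integer with $\beta^{-n_i-1}\le\diam U_i<\beta^{-n_i}$; then $\beta^{-n_i-1}\le\delta=\beta^{-N-1}$ forces $n_i\ge N\ge k$. Since $U_i\subseteq B(x_i,\beta^{-n_i})$, Proposition \ref{coveringproperties} covers $U_i\cap E_k$ (a subset of $B(x_i,\beta^{-n_i})\cap[0,1)$) by at most $4(n_i+1)$ cylinders of order $n_i$; any such cylinder $I$ meeting $E_k$ equals $I_{n_i}(y)$ for some $y\in E_k$ with $n_i\ge k$, hence $\nu(I)<c\,|I|^{s}\le c\,\beta^{-n_i s}$ (cylinders of order $n$ have length at most $\beta^{-n}$). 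Therefore, using $\beta^{-n_i-1}\le\diam U_i$ in the form $\beta^{-n_i(s-\epsilon)}\le\beta^{s-\epsilon}(\diam U_i)^{s-\epsilon}\le\beta^{s}(\diam U_i)^{s-\epsilon}$, and then $n_i\ge N$,
$$\nu(U_i\cap E_k)\le 4(n_i+1)\,c\,\beta^{-n_i s}\le c\big(4(n_i+1)\beta^{s}\beta^{-n_i\epsilon}\big)(\diam U_i)^{s-\epsilon}\le c\,(\diam U_i)^{s-\epsilon}.$$
Summing over $i$ gives $\nu(E_k)\le c\sum_i(\diam U_i)^{s-\epsilon}$; taking the infimum over all such covers yields $\nu(E_k)\le c\,\cH^{s-\epsilon}_{\delta}(E_k)\le c\,\cH^{s-\epsilon}(E_k)$, as required.

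The only substantive point is the choice of mesh $\delta$: the covering property introduces a factor $4(n+1)$ at each scale, and the argument closes only because passing from $s$ to $s-\epsilon$ frees up a factor $\beta^{-n_i\epsilon}$ that dominates this polynomial once the cover is fine enough, i.e. once $n_i$ is forced to be large by taking $\delta$ small. Everything else — Borel measurability of the $E_k$, continuity of $\nu$ from below, and the non-atomicity used to dispose of singleton covering sets — is routine bookkeeping.
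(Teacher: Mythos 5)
Your proof is correct. It uses the same two ingredients as the paper -- a localisation of the hypothesis to sets on which the cylinder bound holds uniformly at all sufficiently fine scales, and Proposition \ref{coveringproperties} together with the drop from $s$ to $s-\epsilon$ to absorb the factor $4(n+1)$ -- but it packages them differently. The paper factors the statement through an auxiliary ``cylinder-net'' Hausdorff measure $\cH^{s,\beta}$ and proves two separate lemmas: Lemma \ref{netmeasurelowerbound} (a mass distribution principle for covers by cylinders, using the localisation $E_\delta=\{x\in E: |I_n(x)|<\delta \Rightarrow \nu(I_n(x))<c|I_n(x)|^s\}$) and Lemma \ref{netmeasure} (the comparison $\cH^{s,\beta}(E)\le\cH^{s-\epsilon}(E)$, which is where the covering proposition and the $\epsilon$-loss enter, with no measure $\nu$ involved). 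You merge these into a single pass: you localise by the order ($n\ge k$) rather than by the cylinder length, and you convert an arbitrary $\delta$-cover directly into cylinder covers while simultaneously applying the measure bound. What the paper's route buys is a reusable intermediate object and a clean separation of the measure-theoretic step (where all covers are by Borel cylinders, so no measurability or singleton issues arise) from the purely geometric step; what your route buys is a shorter, self-contained argument, at the cost of having to dispose of singleton covering sets (which you do correctly via non-atomicity of $\nu$ on $E$) and of bounding $\nu$ on possibly non-Borel sets $U_i\cap E_k$, which is harmless since your bound goes through monotonicity/subadditivity of the outer measure applied to the Borel cylinder cover. Both proofs are complete; yours is a legitimate streamlining rather than a different idea.
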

\begin{proof} It follows immediately from Lemma \ref{netmeasurelowerbound} and Lemma \ref{netmeasure}.
\end{proof}

For establishing this lemma, we need the followings.

Let $\beta>1$, $s\ge0$ and $E\subset [0,1)$. For any $\delta>0$, we define $$\cH^{s,\beta}_\delta(E):=\inf\Big\{\sum_k|J_k|^s:|J_k|\le\delta, E\subset \bigcup_kJ_k, \{J_k\} \text{ are countable cylinders}\Big\}.$$
It is increasing as $\delta\searrow0$. We call $\cH^{s,\beta}(E):=\lim_{\delta\to 0}\cH^{s,\beta}_\delta(E)$ the $s$-dimension Hausdorff measure of $E$ related to the cylinder net of $\beta$.

\begin{lemma}\label{netmeasure}
Let $\beta>1$, $s>0$ and $E\subset [0,1)$. Then for any $0<\epsilon<s$, $\cH^{s,\beta}(E)\le\cH^{s-\epsilon}(E)$.
\end{lemma}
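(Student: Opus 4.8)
The plan is to start from an arbitrary economical cover of $E$ by sets of small diameter and replace each member by a controlled number of cylinders, paying for the replacement with the slack between $s$ and $s-\epsilon$. Fix $0<\epsilon<s$. Let $\delta>0$ be small and let $E\subset\bigcup_i U_i$ be a cover with $0<|U_i|\le\delta$ for all $i$; sets of zero diameter are single points, each of which lies in cylinders of arbitrarily small length and hence of arbitrarily small $s$-cost, so they contribute nothing and may be discarded. For each $i$ with $U_i\cap[0,1)\neq\emptyset$, put $d_i:=|U_i|$ and choose $n_i:=\lfloor\log_\beta(1/d_i)\rfloor$, so that $d_i\le\beta^{-n_i}$ while, since $\lfloor x\rfloor>x-1$, also $\beta^{-n_i}\le\beta d_i$. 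Picking any $x_i\in U_i\cap[0,1)$ gives $U_i\cap[0,1)\subset B(x_i,\beta^{-n_i})$, so by Proposition \ref{coveringproperties} the set $U_i\cap[0,1)$ is covered by at most $4(n_i+1)$ cylinders of order $n_i$; each such cylinder has length at most $\beta^{-n_i}$, since $T_\beta^{n_i}$ acts on a cylinder of order $n_i$ as an affine map of slope $\beta^{n_i}$ into $[0,1)$.

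Collecting these cylinders over all $i$ produces a countable cover of $E$ by cylinders. The $s$-cost contributed by $U_i$ is
\[
4(n_i+1)(\beta^{-n_i})^s\le 4(n_i+1)\beta^s d_i^s=\big(4\beta^s(n_i+1)d_i^\epsilon\big)\,d_i^{s-\epsilon}.
\]
Here is where the drop from $s$ to $s-\epsilon$ is spent: since $n_i+1\le\log_\beta(1/d_i)+1$ and $t^\epsilon\log(1/t)\to 0$ as $t\to 0^+$, there is $\delta_0=\delta_0(\beta,s,\epsilon)>0$ with $4\beta^s(n_i+1)d_i^\epsilon\le 1$ whenever $d_i\le\delta_0$. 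Thus, as soon as $\delta\le\delta_0$, the cost from each $U_i$ is at most $d_i^{s-\epsilon}$, and the total cost of the cylinder cover is at most $\sum_i d_i^{s-\epsilon}$.

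It remains to track the scale. Every cylinder produced has order $n_i\ge\log_\beta(1/\delta)-1$, hence length at most $\beta^{-n_i}\le\beta d_i\le\beta\delta$. Therefore $\cH^{s,\beta}_{\beta\delta}(E)\le\sum_i d_i^{s-\epsilon}$, and taking the infimum over all covers $\{U_i\}$ of $E$ with $|U_i|\le\delta$ gives $\cH^{s,\beta}_{\beta\delta}(E)\le\cH^{s-\epsilon}_\delta(E)\le\cH^{s-\epsilon}(E)$ for every $\delta\le\delta_0$. Letting $\delta\to 0$, so that $\beta\delta\to 0$ as well, yields $\cH^{s,\beta}(E)\le\cH^{s-\epsilon}(E)$.

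The elementary inequalities relating $n_i$ to $d_i$, and the remark that a cylinder of order $n$ has length at most $\beta^{-n}$, are routine. The one genuinely delicate point is that the statement demands the constant $1$ in front of $\cH^{s-\epsilon}(E)$ rather than some $\beta,s$-dependent constant; this is exactly what the absorption $4\beta^s(n_i+1)d_i^\epsilon\le 1$ for small $d_i$ achieves, at the cost of passing through the comparison of net measures at scale $\beta\delta$ and of the $\epsilon$-loss in the exponent.
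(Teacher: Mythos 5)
Your argument is correct and follows essentially the same route as the paper's proof: replace each set $U_i$ of a $\delta$-cover by the at most $4(n_i+1)$ cylinders of order $n_i$ (with $\beta^{-n_i-1}<|U_i|\le\beta^{-n_i}$) given by Proposition \ref{coveringproperties}, absorb the factor $4\beta^s(n_i+1)$ into $|U_i|^{\epsilon}$ for $|U_i|$ below a threshold $\delta_0(\beta,s,\epsilon)$, and compare $\cH^{s,\beta}_{\beta\delta}$ with $\cH^{s-\epsilon}_{\delta}$ before letting $\delta\to0$. The only differences are cosmetic (the paper uses the cruder count $8n_i$ and states the absorption as $8\beta^s n\le\beta^{(n+1)\epsilon}$ for large $n$), so there is nothing to add.
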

\begin{proof} Fix $0<\epsilon<s$.
\item(1) Choose $\delta_0>0$ as below.
\newline Since $\beta^{(n+1)\epsilon}\to\infty$ much faster than $8\beta^sn\to\infty$ as $n\to\infty$, there exists $n_0\in\N$ such that for any $n>n_0$, $8\beta^s n\le\beta^{(n+1)\epsilon}$. By $\frac{-\log\delta}{\log\beta}-1\to\infty$ as $\delta\to0^+$, there exists $\delta_0>0$ small enough such that $\frac{-\log\delta_0}{\log\beta}-1>n_0$. Then for any $n>\frac{-\log\delta_0}{\log\beta}-1$, we will have $8\beta^sn\le\beta^{(n+1)\epsilon}$.
\item(2) In order to arrive at the conclusion, it suffices to prove for any $0<\delta<\delta_0$, $\cH^{s,\beta}_{\beta\delta}(E)\le\cH_\delta^{s-\epsilon}(E)$.
\newline Fix $0<\delta<\delta_0$. Let $\{U_i\}$ be a $\delta$-cover of $E$, i.e., $0<|U_i|\le\delta$ and $E\subset\cup_iU_i$. Then for each $U_i$, there exists $n_i\in\N$ such that $\beta^{-n_i-1}<|U_i|\le\beta^{-n_i}$. By Proposition \ref{coveringproperties}, $U_i$ can be covered by at most $8n_i$ cylinders $I_{i,1}, I_{i,2}, \cdots, I_{i,8n_i}$ of order $n_i$. Noting that
$$|I_{i,j}|\le\beta^{-n_i}<\beta|U_i|\le\beta\delta\text{ and }E\subset\bigcup_i\bigcup_{j=1}^{8n_i}I_{i,j},$$
we get
$$\cH^{s,\beta}_{\beta\delta}(E)\le\sum_i\sum_{j=1}^{8n_i}|I_{i,j}|^s\le\sum_i\frac{8n_i}{\beta^{n_is}}\overset{(\star)}{\le}\sum_i\frac{1}{\beta^{(n_i+1)(s-\epsilon)}}<\sum_i|U_i|^{s-\epsilon}.$$
Taking $\inf$ on the right, we conclude that $\cH^{s,\beta}_{\beta\delta}(E)\le\cH_\delta^{s-\epsilon}(E)$.
\newline( $(\star)$ is because $\frac{1}{\beta^{n_i+1}}<|U_i|<\delta_0$ implies $n_i>\frac{-\log\delta_0}{\log\beta}-1$ and then $8n_i\beta^s\le\beta^{(n_i+1)\epsilon}$.)
\end{proof}

\begin{lemma}\label{netmeasurelowerbound}
Let $\beta>1, s\ge0, c>0$, $E\subset [0,1)$ be a Borel set and $\nu$ be a finite Borel measure on $[0,1)$. If $\varlimsup_{n\to\infty}\frac{\nu(I_n(x))}{|I_n(x)|^s}< c$ for all $x\in E$, then $\cH^{s,\beta}(E)\ge c^{-1}\nu(E)$.
\end{lemma}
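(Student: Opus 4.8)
The plan is to run the usual ``mass distribution principle'' argument, but relative to the cylinder net rather than arbitrary covers. The single point requiring care is that, for a general $\beta$ (e.g. $\beta\notin A_0$), a cylinder can have tiny diameter while having small order, so a fixed mesh $\delta$ does not automatically force the covering cylinders to have large order; this is dealt with by a preliminary stratification of $E$.

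First I would stratify $E$: for $N\in\N$ set
$$E_N:=\Big\{x\in E:\nu(I_n(x))<c\,|I_n(x)|^s\text{ for all }n\ge N\Big\}.$$
Since $\{x\in[0,1):\nu(I_n(x))<c|I_n(x)|^s\}$ is a countable union of cylinders, each $E_N$ is Borel; clearly $E_N\subset E_{N+1}$, and the hypothesis $\varlimsup_{n\to\infty}\nu(I_n(x))/|I_n(x)|^s<c$ for every $x\in E$ says exactly that $E=\bigcup_N E_N$. Since $\cH^{s,\beta}$ is monotone and $\nu$ is continuous from below, it then suffices to prove $\cH^{s,\beta}(E_N)\ge c^{-1}\nu(E_N)$ for each fixed $N$, and afterwards let $N\to\infty$.

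Now fix $N$. There are only finitely many admissible words of length $\le N$, and every admissible cylinder has positive length, so $\ell_N:=\min\{|I(w)|:w\in\Sigma_\beta^*,\ |w|\le N\}>0$. Let $0<\delta<\ell_N$ and let $\{J_k\}$ be any countable cover of $E_N$ by cylinders with $|J_k|\le\delta$; discarding those disjoint from $E_N$ only decreases $\sum_k|J_k|^s$ and keeps $E_N$ covered, so I may assume each $J_k$ meets $E_N$. Writing $J_k=I(w^{(k)})$ with $n_k:=|w^{(k)}|$, the bound $|J_k|\le\delta<\ell_N$ forces $n_k>N$ (otherwise $|J_k|\ge\ell_N$). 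Choosing $x_k\in J_k\cap E_N$, one has $J_k=I_{n_k}(x_k)$, so the defining property of $E_N$ (applied with $n=n_k\ge N$) gives
$$\nu(J_k)=\nu(I_{n_k}(x_k))<c\,|I_{n_k}(x_k)|^s=c\,|J_k|^s.$$
Summing and using countable subadditivity of $\nu$ together with $E_N\subset\bigcup_k J_k$,
$$\sum_k|J_k|^s\ge c^{-1}\sum_k\nu(J_k)\ge c^{-1}\nu\Big(\bigcup_k J_k\Big)\ge c^{-1}\nu(E_N).$$

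Taking the infimum over all such covers yields $\cH^{s,\beta}_\delta(E_N)\ge c^{-1}\nu(E_N)$ whenever $\delta<\ell_N$, hence $\cH^{s,\beta}(E_N)=\lim_{\delta\to0}\cH^{s,\beta}_\delta(E_N)\ge c^{-1}\nu(E_N)$. Finally, by monotonicity $\cH^{s,\beta}(E)\ge\sup_N\cH^{s,\beta}(E_N)\ge c^{-1}\sup_N\nu(E_N)=c^{-1}\nu(E)$, the last equality being continuity from below of $\nu$ along $E_N\nearrow E$. The main obstacle (indeed the only nonroutine step) is the reduction through $\ell_N$: it replaces the implication ``small diameter $\Rightarrow$ large order'', which is false for the cylinder net of a general $\beta$; once $E$ is stratified by the level $N$ beyond which the ratio stays below $c$, only finitely many ``short'' cylinders are relevant and a small enough mesh rules them out.
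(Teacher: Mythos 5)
Your proof is correct and follows essentially the same route as the paper: the paper stratifies $E$ by the sets $E_\delta=\{x\in E:\ |I_n(x)|<\delta \text{ implies } \nu(I_n(x))<c|I_n(x)|^s\}$, i.e.\ it indexes the strata directly by the length threshold $\delta$ matching the mesh of the cover, so the order-versus-length conversion you carry out via $\ell_N$ is built into the definition and never needed. Apart from this indexing choice (order $N$ plus the $\ell_N$ reduction versus length threshold $\delta$), your covering, the identification $J_k=I_{n_k}(x_k)$ for a point $x_k$ in the stratum, the summation, and the final limit by monotonicity and continuity from below are the same as in the paper's argument.
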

\begin{proof} For any $\delta>0$, let $E_\delta:=\{x\in E: |I_n(x)|<\delta$ implies $\nu(I_n(x))<c|I_n(x)|^s\}$.
\newline(1) Prove that when $\delta\searrow0$, $E_\delta\nearrow E$ as below.
\newline\textcircled{\footnotesize{$1$}} If $0<\delta_2<\delta_1$, then obviously $E_{\delta_2}\supset E_{\delta_1}$.
\newline\textcircled{\footnotesize{$2$}} It suffices to prove $E=\bigcup_{\delta>0}E_\delta$.
\newline$\boxed{\supset}$ follows from $E\supset E_\delta, \forall\delta>0$.
\newline$\boxed{\subset}$ Let $x\in E$. By $\varlimsup\limits_{n\to\infty}\frac{\nu(I_n(x))}{|I_n(x)|^s}<c$, there exists $N_x\in\N$ such that any $n>N_x$ will have $\nu(I_n(x))<c|I_n(x)|^s$. Let $\delta_x=|I_{N_x}(x)|$, then $|I_n(x)|<\delta_x$ will imply $n>N_x$ and $\nu(I_n(x))<c|I_n(x)|^s$. Therefore $x\in E_{\delta_x}\subset\bigcup_{\delta>0}E_\delta$.
\newline(2) Fix $\delta>0$. Let $\{J_k\}_{k\in K}$ be countable cylinders such that $|J_k|<\delta$ and $\bigcup_{k\in K}J_k\supset E\supset E_\delta$. Let $K'=\{k\in K: J_k\cap E_\delta\neq\emptyset\}$. For any $k\in K'$, there exists $x_k\in J_k\cup E_\delta$. By the definition of $E_\delta$, we get $\nu(J_k)<c|J_k|^s$. So
$$\nu(E_\delta)\le\nu(\bigcup_{k\in K'}J_k)\le\sum_{k\in K'}\nu(J_k)<\sum_{k\in K'}c|J_k|^s\le c\sum_{k\in K}|J_k|^s.$$
Taking $\inf$ on the right, we get $\nu(E_\delta)\le c\cH^{s,\beta}_\delta(E)\le c\cH^{s,\beta}(E)$. Let $\delta\to 0$ on the left, by $E_\delta\nearrow E$, we conclude that $\nu(E)\le c\cH^{s,\beta}(E)$.
\end{proof}

\section{Hausdorff dimension of some level sets}\label{sec:hd}

We apply the Bernoulli-type measures and the modified lower local dimension related to $\beta$-expansions to give some new results on the Hausdorff dimension of level sets in this section.

For $1<\beta\le2$ and $0\le p\le 1$, consider the following \textit{level sets}
$$F_p:=\Big\{x\in [0, 1): \lim_{n\to\infty}\frac{\sharp\{1\le k\le n: \varepsilon_k(x,\beta)=0\}}{n}=p\Big\},$$
$$\underline{F}_p:=\Big\{x\in [0, 1): \varliminf_{n\to\infty}\frac{\sharp\{1\le k\le n: \varepsilon_k(x,\beta)=0\}}{n}=p\Big\},$$
$$\overline{F}_p:=\Big\{x\in [0, 1): \varlimsup_{n\to\infty}\frac{\sharp\{1\le k\le n: \varepsilon_k(x,\beta)=0\}}{n}=p\Big\}.$$
Obviously, $F_p=\underline{F}_p\cap\overline{F}_p$.

\begin{theorem}[Upper bound of the Hausdorff dimension of level sets]\label{upbounddimention} Let $1<\beta\le2$ and $0\le p\le 1$. Then
$$\dim_H F_p\le\min\{\dim_H \underline{F}_p,\dim_H \overline{F}_p\}\le\max\{\dim_H \underline{F}_p,\dim_H \overline{F}_p\}\le\frac{-p\log p-(1-p)\log(1-p)}{\log\beta}.$$
In particular, $\dim_HF_0=\dim_H\underline{F}_0=\dim_H\overline{F}_0=\dim_HF_1=\dim_H\underline{F}_1=\dim_H\overline{F}_1=0$.
\end{theorem}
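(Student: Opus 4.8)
The plan is to deduce everything from Theorem~\ref{dimestimate}(1) combined with the explicit cylinder-measure formula of Remark~\ref{measureformula}, applied not to a single measure but to the whole family $\{\nu_q:0<q<1\}$ and then optimised over the parameter $q$. First observe that the two left-hand inequalities are automatic: since $F_p=\underline{F}_p\cap\overline{F}_p\subset\underline{F}_p$ and $F_p\subset\overline{F}_p$, monotonicity of Hausdorff dimension gives $\dim_H F_p\le\min\{\dim_H\underline{F}_p,\dim_H\overline{F}_p\}$, while $\min\le\max$ is trivial. So the content is the rightmost bound, which I would establish for $\underline{F}_p$ and $\overline{F}_p$ at the same time.

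Write $Z_n(x):=\sharp\{1\le k\le n:\epsilon_k(x,\beta)=0\}$ and $H(t):=-t\log t-(1-t)\log(1-t)$ with the convention $0\log0:=0$. The key claim is that for every $p\in[0,1]$, every $q\in(0,1)$ and every $x\in[0,1)$ for which $p$ is a subsequential limit of $(Z_n(x)/n)_{n\ge1}$, one has
$$\underline{\dim}_{loc}^\beta\nu_q(x)\ \le\ \frac{-p\log q-(1-p)\log(1-q)}{\log\beta}.$$
To prove it, combine $\nu_q(I_n(x))=q^{N_0(x,n)}(1-q)^{N_1(x,n)}$ from Remark~\ref{measureformula} with the always-valid bound $|I_n(x)|\le\beta^{-n}$, the inequality $N_0(x,n)\le Z_n(x)$ (by definition $\cN_0(\epsilon(x,\beta)|_n)$ injects, via $k\mapsto k+1$, into $\{1\le j\le n:\epsilon_j(x,\beta)=0\}$), and the identity $N_1(x,n)=n-Z_n(x)$ (valid since $1<\beta\le2$ forces every digit into $\{0,1\}$). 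Because $-\log q>0$ and $-\log(1-q)>0$, these give
$$\frac{\log\nu_q(I_n(x))}{\log|I_n(x)|}\ \le\ \frac{-\tfrac{Z_n(x)}{n}\log q-\big(1-\tfrac{Z_n(x)}{n}\big)\log(1-q)}{\log\beta};$$
the right-hand side is continuous in $Z_n(x)/n$, so taking the $\varliminf$ over $n$ along a subsequence on which $Z_n(x)/n\to p$ yields the claim.

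To conclude, note that every $x$ in $F_p$, $\underline{F}_p$ or $\overline{F}_p$ has $p$ as a subsequential limit of $(Z_n(x)/n)_{n\ge1}$ --- namely a limit, a $\varliminf$, or a $\varlimsup$, respectively. Hence for $E\in\{F_p,\underline{F}_p,\overline{F}_p\}$ the claim holds at every $x\in E$, and Theorem~\ref{dimestimate}(1) gives $\dim_H E\le\frac{-p\log q-(1-p)\log(1-q)}{\log\beta}$ for all $q\in(0,1)$. A short computation shows that $\inf_{q\in(0,1)}\big(-p\log q-(1-p)\log(1-q)\big)=H(p)$ for every $p\in[0,1]$: the infimum is attained at $q=p$ when $p\in(0,1)$, and is approached as $q\to0^+$ (resp. $q\to1^-$) when $p=0$ (resp. $p=1$). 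Taking the infimum over $q$ therefore gives $\dim_H E\le H(p)/\log\beta$, which is the asserted inequality; the ``in particular'' statement is the special case $p\in\{0,1\}$, where $H(p)=0$.

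I do not anticipate a genuine obstacle, since the technical heart is already packaged into Theorem~\ref{dimestimate} and Remark~\ref{measureformula}. The two points to be careful about are the orientation of $N_0(x,n)\le Z_n(x)$ inside the negative logarithms, and the observation that one must match the parameter $q$ of the covering measure $\nu_q$ to the target frequency $p$: using a single fixed measure for all $p$ would yield a strictly weaker bound than $H(p)/\log\beta$.
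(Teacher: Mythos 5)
Your proof is correct, and for $0<p<1$ it is in substance the paper's own argument: at the optimal choice $q=p$ your key estimate is exactly the inequality the paper derives from $\nu_p(I_n(x))=p^{N_0(x,n)}(1-p)^{N_1(x,n)}$, $N_0(x,n)\le n-N_1(x,n)$ and $|I_n(x)|\le\beta^{-n}$, followed by an application of Theorem \ref{dimestimate}(1); your passage to a subsequence along which $Z_n(x)/n\to p$ is a slightly cleaner substitute for the paper's step of bounding the $\varliminf$ of a sum by the $\varliminf$ of one term plus the $\varlimsup$ of the other, and both correctly cover $\underline{F}_p$ and $\overline{F}_p$ since $\varliminf$ and $\varlimsup$ are subsequential limits. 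Where you genuinely diverge is the degenerate cases $p\in\{0,1\}$, where $\nu_p$ is unusable: the paper handles them through the auxiliary Lemma \ref{varlevelsets} about the monotone sets $\underline{F}_{\le p}$ and $\overline{F}_{\ge p}$ together with a limiting argument $p\to0$ (resp.\ $p\to1$), whereas you decouple the measure parameter $q$ from the target frequency $p$, prove the bound $\bigl(-p\log q-(1-p)\log(1-q)\bigr)/\log\beta$ for every $q\in(0,1)$, and take the infimum over $q$, which equals $\bigl(-p\log p-(1-p)\log(1-p)\bigr)/\log\beta$ for all $p\in[0,1]$ (with $0\log 0=0$), so the endpoints come for free. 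Your route buys a single uniform statement at the cost of an easy optimization; the paper's route produces the explicit intermediate bounds on $\underline{F}_{\le p}$ and $\overline{F}_{\ge p}$, which it reuses later (in the proof of Theorem \ref{levelsetdimension}, to show $\dim_H\underline{F}_{\frac{m+1}{m+2}}=0$). Both rest on the same two ingredients, Remark \ref{measureformula} and Theorem \ref{dimestimate}(1), and your digit-count facts ($N_1(x,n)=n-Z_n(x)$ and $N_0(x,n)\le Z_n(x)$, valid because $\mathcal{A}_\beta=\{0,1\}$ for $1<\beta\le2$) are the same ones the paper uses.
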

\begin{proof} First, we consider $0<p<1$.
\newline For any $x\in[0,1)$ and $n\in\N$, it follows from $\nu_p(I_n(x))=p^{N_0(x,n)}(1-p)^{N_1(x,n)}$ that
$$-\log\nu_p(I_n(x))=N_0(x,n)(-\log p)+N_1(x,n)(-\log(1-p))$$
$$\le(n-N_1(x,n))(-\log p)+N_1(x,n)(-\log(1-p)).$$
By $|I_n(x)|\le\beta^{-n}$, we get
\begin{align}\label{upbound}\frac{-\log\nu_p(I_n(x))}{-\log|I_n(x)|}\le\frac{(1-\frac{N_1(x,n)}{n})(-\log p)+\frac{N_1(x,n)}{n}(-\log(1-p))}{\log\beta}.
\end{align}
\item(1) For any $x\in\underline{F}_p$, it follows from $\varliminf\limits_{n\to\infty}(1-\frac{N_1(x,n)}{n})=p$ and $\varlimsup\limits_{n\to\infty}\frac{N_1(x,n)}{n}=1-p$ that
\begin{eqnarray*}
\varliminf_{n\to\infty}\frac{\log\nu_p(I_n(x))}{\log|I_n(x)|}&\le&\frac{\varliminf\limits_{n\to\infty}(1-\frac{N_1(x,n)}{n})(-\log p)+\varlimsup\limits_{n\to\infty}\frac{N_1(x,n)}{n}(-\log (1-p))}{\log\beta}\\
&=&\frac{-p\log p-(1-p)\log(1-p)}{\log\beta}.
\end{eqnarray*}
By Theorem \ref{dimestimate} (1), we get
$$\dim_H \underline{F}_p\le\frac{-p\log p-(1-p)\log(1-p)}{\log\beta}.$$
\item(2) For any $x\in\overline{F}_p$, it follows from $\varlimsup\limits_{n\to\infty}(1-\frac{N_1(x,n)}{n})=p$ and $\varliminf_{n\to\infty}\frac{N_1(x,n)}{n}=1-p$ that
\begin{eqnarray*}
\varliminf\limits_{n\to\infty}\frac{\log\nu_p(I_n(x))}{\log|I_n(x)|}&\le&\frac{\varlimsup\limits_{n\to\infty}(1-\frac{N_1(x,n)}{n})(-\log p)+\varliminf\limits_{n\to\infty}\frac{N_1(x,n)}{n}(-\log (1-p))}{\log\beta}\\
&=&\frac{-p\log p-(1-p)\log(1-p)}{\log\beta}.
\end{eqnarray*}
By Theorem \ref{dimestimate} (1), we get
$$\dim_H \overline{F}_p\le\frac{-p\log p-(1-p)\log(1-p)}{\log\beta}.$$
Therefore, by $F_p=\underline{F}_p\cap\overline{F}_p$, we get
$$\dim_H F_p\le\min\{\dim_H \underline{F}_p,\dim_H \overline{F}_p\}\le\max\{\dim_H \underline{F}_p,\dim_H \overline{F}_p\}\le\frac{-p\log p-(1-p)\log(1-p)}{\log\beta}.$$

Before proving $\dim_HF_0=\dim_H\underline{F}_0=\dim_H\overline{F}_0=\dim_HF_1=\dim_H\underline{F}_1=\dim_H\overline{F}_1=0$, we establish the following.

\begin{lemma}\label{varlevelsets}
Let $1<\beta\le2$ and $0<p<1$.
\begin{itemize}
\item[(1)] Let
$$\underline{F}_{\le p}:=\Big\{x\in [0,1): \varliminf_{n\to\infty}\frac{\sharp\{1\le k\le n: \varepsilon_k(x,\beta)=0\}}{n}\le p\Big\}.$$
Then
$$\dim_H \underline{F}_{\le p}\le\frac{-p\log p-\log(1-p)}{\log\beta}.$$
\item[(2)] Let
$$\overline{F}_{\ge p}:=\Big\{x\in [0,1): \varlimsup_{n\to\infty}\frac{\sharp\{1\le k\le n: \varepsilon_k(x,\beta)=0\}}{n}\ge p\Big\}.$$
Then
$$\dim_H \overline{F}_{\ge p}\le\frac{-\log p-(1-p)\log(1-p)}{\log\beta}.$$
\end{itemize}
\end{lemma}
\begin{proof}
\begin{itemize}
\item[(1)] For any $x\in \underline{F}_{\le p}$, it follows from \eqref{upbound}, $\varliminf\limits_{n\to\infty}(1-\frac{N_1(x,n)}{n})\le p$ and $\frac{N_1(x,n)}{n}\le1$ $(\forall n\in\N)$ that
    $$\varliminf_{n\to\infty}\frac{\log\nu_p(I_n(x))}{\log|I_n(x)|}\le\frac{-p\log p-\log(1-p)}{\log\beta}.$$
    By Theorem \ref{dimestimate} (1), we get
    $$\dim_H \underline{F}_{\le p}\le\frac{-p\log p-\log(1-p)}{\log\beta}.$$
\item[(2)] For any $x\in \overline{F}_{\ge p}$, it follows from \eqref{upbound}, $\varliminf\limits_{n\to\infty}\frac{N_1(x,n)}{n}\le 1-p$ and $1-\frac{N_1(x,n)}{n}\le1$ $(\forall n\in\N)$ that
    $$\varliminf_{n\to\infty}\frac{\log\nu_p(I_n(x))}{\log|I_n(x)|}\le\frac{-\log p-(1-p)\log(1-p)}{\log\beta}.$$
    By Theorem \ref{dimestimate} (1), we get
    $$\dim_H \overline{F}_{\ge p}\le\frac{-\log p-(1-p)\log(1-p)}{\log\beta}.$$
\end{itemize}
\end{proof}

Now we prove $\dim_HF_0=\dim_H\underline{F}_0=\dim_H\overline{F}_0=\dim_HF_1=\dim_H\underline{F}_1=\dim_H\overline{F}_1=0$.
\item(1) For any $0<p<1$, $F_0=\overline{F}_0\subset\underline{F}_0\subset \underline{F}_{\le p}$ implies $\dim_HF_0=\dim_H\overline{F}_0\le\dim_H\underline{F}_0\le \dim_H\underline{F}_{\le p}$. Let $p\to 0$, by Lemma \ref{varlevelsets} (1), we get $\dim_HF_0=\dim_H\overline{F}_0=\dim_H\underline{F}_0=0$.
\item(2) For any $0<p<1$, $F_1=\underline{F}_1\subset\overline{F}_1\subset \overline{F}_{\ge p}$ implies $\dim_HF_1=\dim_H\underline{F}_1\le\dim_H\overline{F}_1\le \dim_H\overline{F}_{\ge p}$. Let $p\to 0$, by Lemma \ref{varlevelsets} (2), we get $\dim_HF_1=\dim_H\underline{F}_1=\dim_H\overline{F}_1=0$.
\end{proof}

We give the Hausdorff dimensions of these three kinds of level sets for a class of $\beta$.

\begin{theorem}\label{levelsetdimension}
Let $1<\beta<2$, $m\in\N_{\ge0}$ such that $\epsilon(1,\beta)=10^m10^\infty$.
\newline (1) If $0\le p<\frac{m+1}{m+2}$, then $F_p=\underline{F}_p=\overline{F}_p=\emptyset$ and $\dim_H F_p=\dim_H\underline{F}_p=\dim_H\overline{F}_p=0$.
\newline (2) If $\frac{m+1}{m+2}\le p\le 1$, then $\dim_HF_p=\dim_H\underline{F}_p=\dim_H\overline{F}_p$
\begin{small}
$$=\frac{(mp-m+p)\log(mp-m+p)-(mp-m+2p-1)\log(mp-m+2p-1)-(1-p)\log(1-p)}{\log\beta}.$$
\end{small}
In particular, $\dim_HF_{\frac{m+1}{m+2}}=\dim_H\underline{F}_{\frac{m+1}{m+2}}=\dim_H\overline{F}_{\frac{m+1}{m+2}}=\dim_HF_1=\dim_H\underline{F}_1=\dim_H\overline{F}_1=0$.
\end{theorem}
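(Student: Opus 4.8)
The plan is to combine the combinatorial structure of $\Sigma_\beta$ with the Bernoulli-type measures $\mu_r$ and the estimate in Theorem~\ref{dimestimate}. Write $M=m+2$, so $\epsilon(1,\beta)$ is finite of length $M$ with $\epsilon_1=\epsilon_M=1$ and all intermediate digits $0$, and $\epsilon^*(1,\beta)=(10^{m+1})^\infty$. By Parry's criterion (Lemma~\ref{charADM}), any two occurrences of the digit $1$ in an admissible sequence are at distance $\ge m+2$: if $w_i=w_j=1$ with $0<j-i\le m+1$, then $\sigma^{i-1}$ of an admissible extension of $w$ begins with $1\,0^{j-i-1}1$ and hence exceeds $(10^{m+1})^\infty$ lexicographically. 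Therefore $N_1(x,n)\le\frac{n}{m+2}+1$ for all $x$ and $n$, so $\varliminf_n\frac1n\sharp\{1\le k\le n:\epsilon_k(x,\beta)=0\}=1-\varlimsup_n\frac{N_1(x,n)}{n}\ge\frac{m+1}{m+2}$ for every $x$. This proves part~(1): for $p<\frac{m+1}{m+2}$ the condition defining $\underline F_p$ — hence those defining $\overline F_p$ and $F_p$ — can never hold, and the empty set has dimension $0$. Since the formula in part~(2) vanishes at $p=\frac{m+1}{m+2}$ and at $p=1$ (with $0\log0=0$), and $\dim_HF_1=\dim_H\underline F_1=\dim_H\overline F_1=0$ follows already from Theorem~\ref{upbounddimention}, it remains to handle $\frac{m+1}{m+2}<p<1$.

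The technical core is the exact identity
$$N_0(x,n)=n-\sum_{i=0}^{m+1}N_1(x,n-i)\qquad(x\in[0,1),\ n\ge1),$$
with the convention $N_1(x,\ell)=0$ for $\ell\le0$. To prove it I would first show — using $\epsilon^*(1,\beta)=(10^{m+1})^\infty$ together with Lemma~\ref{admfull} and Proposition~\ref{charFULL} — that for a prefix $w_1\cdots w_k$ of $\epsilon(x,\beta)$ one has $w_1\cdots w_k1\in\Sigma_\beta^*$ if and only if none of $w_{k-m},\dots,w_k$ equals $1$ (reading $w_i=0$ for $i\le0$); consequently $k\in\cN_0(\epsilon(x,\beta)|_n)$ exactly when $w_{k+1}=0$ and the last $1$ before position $k+1$ lies at distance $>m+1$ (or is absent). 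Subtracting from $\sharp\{0\le k<n:w_{k+1}=0\}=n-N_1(x,n)$ the positions each $1$ ``blocks'' — these blocks are pairwise disjoint by the spacing above, the $1$ at position $a\le n$ blocks $\min(m+1,n-a)$ of them, and $\sum_{a:\,w_a=1}\min(m+1,n-a)=\sum_{i=1}^{m+1}N_1(x,n-i)$ — gives the identity. In particular $N_0(x,n)=n-(m+2)N_1(x,n)+O(1)$ uniformly in $x$. I would also note that $\beta\in A_0$, since the runs of $0$s in $\epsilon^*(1,\beta)$ have length $m+1$; hence by Proposition~\ref{cylinderlength} $-\log|I_n(x)|=n\log\beta+O(1)$ uniformly in $x$.

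For part~(2), set $p':=\frac{(m+2)p-m-1}{(m+1)p-m}$, which lies in $(0,1)$ when $\frac{m+1}{m+2}<p<1$, and use $\nu_{p'}=\pi_\beta\mu_{p'}$. By Remark~\ref{measureformula} and the identity,
$$-\log\nu_{p'}(I_n(x))=-n\log p'+N_1(x,n)\bigl[(m+2)\log p'-\log(1-p')\bigr]+O(1),$$
so, dividing by $-\log|I_n(x)|=n\log\beta+O(1)$ and writing $\gamma=\frac{-\log p'}{\log\beta}$, $\delta=\frac{(m+2)\log p'-\log(1-p')}{\log\beta}$, one gets $\underline{\dim}_{loc}^\beta\nu_{p'}(x)=\gamma+\delta\,\varliminf_n\frac{N_1(x,n)}{n}$ if $\delta\ge0$ and $=\gamma+\delta\,\varlimsup_n\frac{N_1(x,n)}{n}$ if $\delta<0$. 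A direct substitution shows $s:=\gamma+\delta(1-p)$ is precisely the value claimed in part~(2) (using $(m+1)p-m=1-(m+1)(1-p)$ and $(m+2)p-m-1=1-(m+2)(1-p)$). For $x\in\underline F_p$ we have $\varlimsup_n\frac{N_1(x,n)}{n}=1-p\ge\varliminf_n\frac{N_1(x,n)}{n}$, and for $x\in\overline F_p$ we have $\varliminf_n\frac{N_1(x,n)}{n}=1-p\le\varlimsup_n\frac{N_1(x,n)}{n}$; in each case, and for either sign of $\delta$, $\underline{\dim}_{loc}^\beta\nu_{p'}(x)\le s$, so Theorem~\ref{dimestimate}(1) yields $\dim_H\underline F_p\le s$ and $\dim_H\overline F_p\le s$.

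For the matching lower bound, note that $\epsilon(1,\beta)$ is finite, so Theorem~\ref{mp} provides a $T_\beta$-ergodic measure $m_{p'}\sim\nu_{p'}$. Applying Birkhoff's ergodic theorem to $\mathbbm{1}_{[1/\beta,1)}$ and using $\sum_{k=0}^{n-1}\mathbbm{1}_{[1/\beta,1)}(T_\beta^kx)=N_1(x,n)$ gives $\frac{N_1(x,n)}{n}\to m_{p'}([1/\beta,1))=:\alpha$ for $\nu_{p'}$-a.e.\ $x$. The identity forces $\frac{N_0(x,n)}{n}\to1-(m+2)\alpha$ a.e., while the defining rule of $\mu_{p'}$ (digit $1$ with probability $1-p'$ at each admissible ``slot'') together with the strong law of large numbers along the random sequence of such slots gives $\frac{N_1(x,n)}{N_0(x,n)+N_1(x,n)}\to1-p'$ a.e.; eliminating $N_0$ yields $\alpha=\frac{1-p'}{(m+2)-(m+1)p'}=1-p$ by the choice of $p'$. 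Hence $\nu_{p'}(F_p)=1$; and on $F_p$ one has $\frac{N_1(x,n)}{n}\to1-p$, so $\underline{\dim}_{loc}^\beta\nu_{p'}(x)=s$ there, whence Theorem~\ref{dimestimate}(2) gives $\dim_HF_p\ge s$. Combined with $F_p\subset\underline F_p\cap\overline F_p$ and the upper bounds, this gives $\dim_HF_p=\dim_H\underline F_p=\dim_H\overline F_p=s$. The two steps I expect to be most delicate are the admissibility criterion for $w_1\cdots w_k1$ (the source of the combinatorial identity), which needs a careful lexicographic case analysis against $(10^{m+1})^\infty$, and the identification $m_{p'}([1/\beta,1))=1-p$, whose rigorous proof requires a law of large numbers along the random, stopping-time-type sequence of admissible slots rather than along a fixed deterministic subsequence.
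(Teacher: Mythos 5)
Your treatment of part (1), of $p=1$, and of the open range $\frac{m+1}{m+2}<p<1$ follows the paper's route (same measure parameter $p'=q=\frac{(m+2)p-m-1}{(m+1)p-m}$, same use of Proposition \ref{cylinderlength}, Theorem \ref{dimestimate} and Theorem \ref{mp}), with two serviceable variations: your exact identity $N_0(x,n)=n-\sum_{i=0}^{m+1}N_1(x,n-i)$ is a sharpened form of Lemma \ref{n0estimate} and checks out, and your identification of $m_{p'}[0,\frac1\beta)=p$ by a strong law along the random sequence of admissible slots (combined with Birkhoff) replaces the paper's deterministic computation in Lemma \ref{mpcylinder}, where the relations $a=(1-p')b$ and $b+(m+1)a=1$ for the averaged measures of $[1]$ and $[0^{m+1}]$ give the value directly; your route works but needs the martingale/stopping-time SLLN spelled out, which the paper avoids entirely.

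The genuine gap is the endpoint $p=\frac{m+1}{m+2}$, which part (2) of the statement includes. You dispose of it with ``the formula in part (2) vanishes at $p=\frac{m+1}{m+2}$'', but that is a statement about the claimed value, not a proof that $\dim_H\underline F_{\frac{m+1}{m+2}}=0$; your main argument cannot cover this case because there $p'=0$ and the measure $\nu_{p'}$ degenerates. Note that $F_{\frac{m+1}{m+2}}=\overline F_{\frac{m+1}{m+2}}\subset\underline F_{\frac{m+1}{m+2}}$ (since the liminf of the $0$-frequency is always $\ge\frac{m+1}{m+2}$), so the problematic set is $\underline F_{\frac{m+1}{m+2}}$, which is nonempty and is not handled by any upper bound you prove (your bounds for $\underline F_p$ require the limsup of $N_1/n$ to equal exactly $1-p$ with $p>\frac{m+1}{m+2}$). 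The paper closes this by a monotone limiting argument: for $\frac{m+1}{m+2}<p<1$ it bounds $\dim_H\underline F_{\le p}$, where $\underline F_{\le p}$ is the set with liminf of the $0$-frequency at most $p$ (using $\varliminf_n N_0(x,n)/n\le 1-(m+2)(1-p)$ and the trivial bound $N_1(x,n)/n\le1$ in the local-dimension estimate for $\nu_q$), obtaining a bound of the form $-\frac{(1-(m+2)(1-p))\log q+\log(1-q)}{\log\beta}$ which tends to $0$ as $p\downarrow\frac{m+1}{m+2}$ (i.e.\ $q\downarrow0$), and then uses $\underline F_{\frac{m+1}{m+2}}\subset\underline F_{\le p}$. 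Your machinery can produce the same bound, but as written the proposal simply omits this case, so the ``in particular'' clause and the value of part (2) at $p=\frac{m+1}{m+2}$ are not established.
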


\begin{remark}\label{well-known} Take $m=0$ in Theorem \ref{levelsetdimension}. We get the well-known result (see for example \cite{FZ04})
$$\dim_HF_p=\frac{p\log p-(2p-1)\log(2p-1)-(1-p)\log(1-p)}{\log\beta}$$
where $\beta=\frac{\sqrt{5}+1}{2}$ is the golden ratio and $\frac{1}{2}\le p\le1$.
\end{remark}

\begin{proof}[Proof of Theorem \ref{levelsetdimension}]
\item(1) For any $x\in[0,1)$, by Lemma \ref{charADM}, each digit $1$ in $\epsilon(x,\beta)$ must be followed by at least $(m+1)$ consecutive $0$s. Thus
    $$\varlimsup_{n\to\infty}\frac{N_1(x,n)}{n}\le\frac{1}{m+2}\quad\text{and then}\quad\varliminf_{n\to\infty}\frac{\sharp\{1\le k\le n:\epsilon_k(x,\beta)=0\}}{n}\ge\frac{m+1}{m+2}$$
    for any $x\in[0,1)$. If $0\le p<\frac{m+1}{m+2}$, we get $F_p=\underline{F}_p=\overline{F}_p=\emptyset$.
\item(2) \textcircled{\footnotesize{$1$}} First, we consider $\frac{m+1}{m+2}< p< 1$.
\newline For any $x\in[1,0)$ and $n\in\N$, by Proposition \ref{cylinderlength}, we get
$$\frac{1}{n\log\beta-\log c}\le\frac{1}{-\log|I_n(x)|}\le\frac{1}{n\log\beta}.$$
Let $q:=\frac{mp-m+2p-1}{mp-m+p}$. Then $0<q<1$ since $\frac{m+1}{m+2}<p<1$. Let $\nu_q$ be the $(q,1-q)$ Bernoulli measure on $[0,1)$. It follows from
$$-\log\nu_q(I_n(x))=N_0(x,n)(-\log q)+N_1(x,n)(-\log(1-q))$$
that
\begin{align}\label{uplowbound}\frac{\frac{N_0(x,n)}{n}(-\log q)+\frac{N_1(x,n)}{n}(-\log(1-q))}{\log\beta-\frac{\log c}{n}}\le\frac{\log\nu_q(I_n(x))}{\log|I_n(x)|}\le\frac{\frac{N_0(x,n)}{n}(-\log q)+\frac{N_1(x,n)}{n}(-\log(1-q))}{\log\beta}.
\end{align}
Taking $\varliminf_{n\to\infty}$, we get
$$\underline{\dim}_{loc}^\beta\nu_q(x)=\varliminf_{n\to\infty}\frac{\frac{N_0(x,n)}{n}(-\log q)+\frac{N_1(x,n)}{n}(-\log(1-q))}{\log\beta}.$$
\begin{itemize}
\item[i)] Prove $\dim_H\underline{F}_p\le\frac{(1-(m+2)(1-p))(-\log q)+(1-p)(-\log(1-q))}{\log\beta}$.
  \newline For any $x\in \underline{F}_p$, we have $\varlimsup\limits_{n\to\infty}\frac{N_1(x,n)}{n}=1-p$ and then by Lemma \ref{n0estimate}, $\varliminf\limits_{n\to\infty}\frac{N_0(x,n)}{n}=1-(m+2)(1-p)$. Thus
  \begin{eqnarray*}
  \underline{\dim}_{loc}^\beta\nu_q(x)&\le&\frac{\varliminf\limits_{n\to\infty}\frac{N_0(x,n)}{n}(-\log q)+\varlimsup\limits_{n\to\infty}\frac{N_1(x,n)}{n}(-\log(1-q))}{\log\beta}\\
  &=&\frac{(1-(m+2)(1-p))(-\log q)+(1-p)(-\log(1-q))}{\log\beta}.
  \end{eqnarray*}
  Then we apply Theorem \ref{dimestimate} (1).
\item[ii)] Prove $\dim_H\overline{F}_p\le\frac{(1-(m+2)(1-p))(-\log q)+(1-p)(-\log(1-q))}{\log\beta}$.
  \newline For any $x\in \overline{F}_p$, we have $\varliminf\limits_{n\to\infty}\frac{N_1(x,n)}{n}=1-p$ and then by Lemma \ref{n0estimate}, $\varlimsup\limits_{n\to\infty}\frac{N_0(x,n)}{n}=1-(m+2)(1-p)$. Thus
  \begin{eqnarray*}
  \underline{\dim}_{loc}^\beta\nu_q(x)&\le&\frac{\varlimsup\limits_{n\to\infty}\frac{N_0(x,n)}{n}(-\log q)+\varliminf\limits_{n\to\infty}\frac{N_1(x,n)}{n}(-\log(1-q))}{\log\beta}\\
  &=&\frac{(1-(m+2)(1-p))(-\log q)+(1-p)(-\log(1-q))}{\log\beta}.
  \end{eqnarray*}
  Then we apply Theorem \ref{dimestimate} (1).
\item[iii)] Prove $\dim_HF_p\ge\frac{(1-(m+2)(1-p))(-\log q)+(1-p)(-\log(1-q))}{\log\beta}$.
  \newline For any $x\in F_p$, we have $\lim_{n\to\infty}\frac{N_1(x,n)}{n}=1-p$ and then by Lemma \ref{n0estimate}, $\lim_{n\to\infty}\frac{N_0(x,n)}{n}=1-(m+2)(1-p)$. Thus
  $$\underline{\dim}_{loc}^\beta\nu_q(x)=\frac{(1-(m+2)(1-p))(-\log q)+(1-p)(-\log(1-q))}{\log\beta}.$$
  By Theorem \ref{dimestimate} (2), it suffices to prove $\nu_q(F_p)=1>0$.
  \newline By $\epsilon_k(x,\beta)=0\Leftrightarrow\lfloor\beta T_\beta^{k-1}x\rfloor=0\Leftrightarrow0\le T_\beta^{k-1}x\le\frac{1}{\beta}\Leftrightarrow\mathbbm{1}_{[0,\frac{1}{\beta})}(T_\beta^{k-1}x)=1$, we get $$\frac{1}{n}\sharp\{1\le k\le n:\epsilon_k(x,\beta)=0\}=\frac{1}{n}\sum_{k=1}^n\mathbbm{1}_{[0,\frac{1}{\beta})}(T_\beta^{k-1}x).$$
  Since $([0,1),\mathcal{B}[0,1),m_q,T_\beta)$ is ergodic and the indicator function $\mathbbm{1}_{[0,\frac{1}{\beta})}$ is $m_q$-integrable, it follows from the Birkhoff Ergodic Theorem that
  $$\lim_{n\to\infty}\frac{1}{n}\sum_{k=1}^n\mathbbm{1}_{[0,\frac{1}{\beta})}(T_\beta^{k-1}x)=\int\mathbbm{1}_{[0,\frac{1}{\beta})}dm_q=m_q[0,\frac{1}{\beta})\xlongequal[\text{Lemma \ref{mpcylinder}}]{\text{by}}\frac{m(1-q)+1}{(m+1)(1-q)+1}\xlongequal[\text{def. of }q]{\text{by the}}p$$
  for $m_q\text{-}a.e.\text{ } x\in[0,1)$. Therefore $m_q(F_p)=1$. By $m_q\sim\nu_q$, we get $\nu_q(F_p)=1>0$.
\end{itemize}
Combining i), ii) iii) and $F_p=\underline{F}_p\cap\overline{F}_p$, we get
$$\dim_HF_p=\dim_H\underline{F}_p=\dim_H\overline{F}_p=\frac{(1-(m+2)(1-p))(-\log q)+(1-p)(-\log(1-q))}{\log\beta}.$$
We draw the conclusion by $q=\frac{mp-m+2p-1}{mp-m+p}$.
\newline\textcircled{\footnotesize{$2$}} For $p=1$, it follows from Theorem \ref{upbounddimention} that $\dim_HF_1=\dim_H\underline{F}_1=\dim_H\overline{F}_1=0$.
\newline\textcircled{\footnotesize{$3$}} Prove $\dim_HF_{\frac{m+1}{m+2}}=\dim_H\underline{F}_{\frac{m+1}{m+2}}=\dim_H\overline{F}_{\frac{m+1}{m+2}}=0$.
\newline By $\varliminf\limits_{n\to\infty}\frac{\sharp\{1\le k\le n:\epsilon_k(x,\beta)=0\}}{n}\ge\frac{m+1}{m+2}$ for any $x\in[0,1)$ in (1), we get $F_{\frac{m+1}{m+2}}=\overline{F}_{\frac{m+1}{m+2}}$. Since $F_{\frac{m+1}{m+2}}\subset\underline{F}_{\frac{m+1}{m+2}}$, it suffices to prove $\dim\underline{F}_{\frac{m+1}{m+2}}=0$.
\newline For $\frac{m+1}{m+2}<p<1$, let $q:=\frac{mp-m+2p-1}{mp-m+p}$. Then $0<q<1$. For any $x\in \underline{F}_{\le p}$ (see Lemma \ref{varlevelsets} (1) for definition), we have $\varlimsup\limits_{n\to\infty}\frac{N_1(x,n)}{n}\ge1-p$ and then by Lemma \ref{n0estimate}, $\varliminf_{n\to\infty}\frac{N_0(x,n)}{n}\le1-(m+2)(1-p)$. It follows from $\frac{N_1(x,n)}{n}\le1$ $(\forall n\in\N)$ and \eqref{uplowbound} that
$$\varliminf_{n\to\infty}\frac{\log\nu_q(I_n(x))}{\log|I_n(x)|}\le-\frac{(1-(m+2)(1-p))\log q+\log(1-q)}{\log\beta}$$
for any $x\in \underline{F}_{\le p}$. By Theorem \ref{dimestimate} (1) and the definition of $q$, we get
\begin{small}
$$\dim_H\underline{F}_{\le p}\le-\frac{(mp-m+2p-1)\log(mp-m+2p-1)-(mp-m+2p-1)\log(mp-m+p)+\log(1-q)}{\log\beta}.$$
\end{small}
For any $\frac{m+1}{m+2}<p<1$, $\underline{F}_{\frac{m+1}{m+2}}\subset \underline{F}_{\le p}$ implies $\dim_H\underline{F}_{\frac{m+1}{m+2}}\le\dim_H\underline{F}_{\le p}$. Let $p\to\frac{m+1}{m+2}$, then $q\to0$ and we get $\dim_H\underline{F}_{\frac{m+1}{m+2}}=0$.
\end{proof}

\begin{lemma}\label{n0estimate}
Let $1<\beta<2$ and $m\in\N_{\ge0}$ such that $\epsilon(1,\beta)=10^m10^\infty$. Then for any $x\in[0,1)$ and $n\ge m+2$, we have $n\le N_0(x,n)+(m+2)N_1(x,n)\le n+m+1$.
\end{lemma}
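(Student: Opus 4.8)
The plan is to exploit the explicit form of $\epsilon^*(1,\beta)$ for these $\beta$ to describe exactly which zeros of a prefix $\epsilon(x,\beta)|_n$ are counted by $N_0$. Since $\epsilon(1,\beta)=10^m10^\infty$ is finite of length $M=m+2$, the modified expansion is the periodic sequence $\epsilon^*(1,\beta)=(10^{m+1})^\infty$. First I would derive from Parry's criterion (Lemma \ref{charADM}) the following description of admissibility: a word $v\in\Sigma_\beta^*$ is admissible if and only if no two digits $1$ of $v$ lie at distance $\le m+1$. Indeed, applying Lemma \ref{charADM} to $v0^\infty$: since $v0^\infty$ has only finitely many nonzero digits, no shift of it can equal $(10^{m+1})^\infty$, so the only obstruction to $\sigma^k(v0^\infty)\prec(10^{m+1})^\infty$ is a digit $1$ followed, within the next $m+1$ positions, by another digit $1$; and $v0^\infty$ is admissible iff $v$ is.

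Now fix $x\in[0,1)$ and $n\ge m+2$, write $w:=\epsilon(x,\beta)|_n\in\Sigma_\beta^n$ and $n_1:=N_1(w)=N_1(x,n)$. If $n_1=0$ then $w=0^n$ and every $0^k1$ is admissible, so $N_0(w)=n$ and the two inequalities are clear; so assume $n_1\ge1$ and let $p_1<\dots<p_{n_1}$ be the positions of the digits $1$ in $w$. The admissibility description gives $p_{i+1}\ge p_i+m+2$ for $1\le i<n_1$, and moreover $w_{p_i+t}=0$ whenever $1\le t\le m+1$ and $p_i+t\le n$. Next I would classify the zeros of $w$: by definition the zero in position $j$ is counted by $N_0(w)$ exactly when $w_1\cdots w_{j-1}1$ is admissible, which by the description above means precisely that none of the positions $j-m-1,\dots,j-1$ carries a digit $1$. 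Consequently the zeros \emph{not} counted by $N_0(w)$ are exactly those lying in the blocks
$$B_i:=\{p_i+1,p_i+2,\dots,\min\{p_i+m+1,\,n\}\}\qquad(1\le i\le n_1),$$
which are pairwise disjoint because $p_{i+1}\ge p_i+m+2$, and each of whose elements does carry a zero by the previous observation.

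It remains to count. For $i<n_1$ one has $p_i+m+1\le p_{i+1}-1\le p_{n_1}-1\le n-1$, so $|B_i|=m+1$; and $|B_{n_1}|=\min\{m+1,\,n-p_{n_1}\}$. Since $w$ has $n-n_1$ zeros in total, combining this with the classification yields
$$N_0(w)=(n-n_1)-(n_1-1)(m+1)-\min\{m+1,\,n-p_{n_1}\},$$
and regrouping the terms in $n_1$ gives the clean identity
$$N_0(w)+(m+2)n_1=n+(m+1)-\min\{m+1,\,n-p_{n_1}\}.$$
Since $0\le\min\{m+1,\,n-p_{n_1}\}\le m+1$, this immediately gives $n\le N_0(x,n)+(m+2)N_1(x,n)\le n+m+1$.

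The bookkeeping (the two displayed identities and the size computation) is routine; the one step that needs care — and the only genuinely combinatorial point — is the equivalence ``the zero in position $j$ is uncounted $\iff$ $j\in B_i$ for some $i$'', which is where the special shape $\epsilon^*(1,\beta)=(10^{m+1})^\infty$ enters in an essential way.
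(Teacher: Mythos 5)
Your proof is correct and takes essentially the same route as the paper's: both arguments reduce to the observation (via Parry's criterion, equivalently the fullness of $u0^{m+1}$) that a zero at position $j$ is counted by $N_0$ exactly when no digit $1$ occurs among the $m+1$ preceding positions, and then count the uncounted zeros attached to each digit $1$ -- your blocks $B_i$ are just the paper's sets $\cN_{10},\dots,\cN_{10^{m+1}}$ regrouped by the position of the preceding $1$. The only difference is cosmetic: you push the bookkeeping to the exact identity $N_0(w)+(m+2)N_1(w)=n+m+1-\min\{m+1,\,n-p\}$ (with $p$ the position of the last digit $1$), a slight sharpening from which both of the paper's inequalities follow at once.
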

\begin{proof}
Let $w\in\Sigma_\beta^n$. It suffices to prove $n\overset{(1)}{\le}N_0(w)+(m+2)N_1(w)\overset{(2)}{\le} n+m+1$.
\item(1) Write
$$\cN_{10}(w):=\{2\le k\le n:w_{k-1}w_k=10\},\quad\cN_{100}(w):=\{3\le k\le n:w_{k-2}w_{k-1}w_k=100\},$$
$$\cdots,\quad\cN_{10^{m+1}}(w):=\{m+2\le k\le n:w_{k-m-1}\cdots w_k=10^{m+1}\}$$
and let
$$N_{10}(w):=\sharp\cN_{10}(w),\quad N_{100}(w):=\sharp\cN_{100}(w),\quad\cdots,\quad N_{10^{m+1}}(w):=\sharp\cN_{10^{m+1}}(w).$$
Noting that by Proposition \ref{tail-non-full}, $u0^{m+1}$ is full for any $u\in\Sigma_\beta^*$ and then $u0^{m+1}1$ is admissible, we get
$$\{1\le k\le n:w_k=0\}=(\cN_0(w)+1)\cup\cN_{10}(w)\cup\cN_{100}(w)\cup\cdots\cup\cN_{10^{m+1}}$$
which is a disjoint union. Thus
$$\sharp\{1\le k\le n:w_k=0\}=N_0(w)+N_{10}(w)+N_{100}(w)+\cdots+N_{10^{m+1}}(w)$$
and then
$$n=N_0(w)+N_{10}(w)+N_{100}(w)+\cdots+N_{10^{m+1}}(w)+N_1(w).$$
By $N_{10}(w), N_{100}(w), \cdots, N_{10^{m+1}}(w)\le N_1(w)$, we get $n\le N_0(w)+(m+2)N_1(w)$.
\item(2) If $N_1(w)=0$, the conclusion is obvious. If $N_1(w)\ge1$, except for the last digit $1$ in $w$, by Lemma \ref{charADM}, the other $1$s must be followed by at least (m+1) consecutive $0$s, and non of these $0$s can be replaced by $1$ to get an admissible word. Therefore
    $$N_1(w)+(m+1)(N_1(w)-1)+N_0(w)\le n,\quad i.e.,\quad N_0(w)+(m+2)N_1(w)\le n+m+1.$$
\end{proof}

\begin{lemma}\label{mpcylinder}
Let $1<\beta<2$, $m\in\N_{\ge0}$ such that $\epsilon(1,\beta)=10^m10^\infty$ and $0<p<1$. Then
$$m_p[0,\frac{1}{\beta})=\frac{m(1-p)+1}{(m+1)(1-p)+1}$$
where $m_p$ is given by Theorem \ref{mp}.
\end{lemma}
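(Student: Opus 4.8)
The plan is to reduce $m_p[0,1/\beta)$ to a digit--frequency limit under $\nu_p$ and to evaluate that limit from the block structure of $\Sigma_\beta$. Since $\epsilon(1,\beta)=10^m10^\infty$ is finite of length $M=m+2$ with $\epsilon_M=1$, the modified expansion is $\epsilon^*(1,\beta)=(10^{m+1})^\infty$; combining Parry's criterion (Lemma~\ref{charADM}) with Propositions~\ref{tail-non-full}, \ref{charFULL} and~\ref{fullfullfull} gives that a word $w\in\Sigma_\beta^*$ is full if and only if $w$ consists only of zeros or ends with at least $m+1$ consecutive zeros, and hence that $w1\in\Sigma_\beta^*$ if and only if $w$ is full (equivalently, each $1$ is followed by $m+1$ forced zeros, as already noted in the proof of Lemma~\ref{n0estimate}). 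In terms of the random walk, the digit after $w$ is forced to be $0$ precisely when $w$ is not full, i.e.\ ends with a $1$ followed by at most $m$ zeros. Hence, under $\mu_p$, the expansion $\epsilon(x,\beta)$ of a $\nu_p$--random point $x$ is a concatenation of independent blocks, each equal to the single digit $0$ with probability $p$ or to $10^{m+1}$ with probability $1-p$ (a block of the second kind being a ``free'' $1$ together with its $m+1$ forced zeros). I expect the main --- but essentially routine --- point to be setting this decomposition up cleanly: independence of the successive free choices follows at once from $\mu_p[w0]=p\,\mu_p[w]$ and $\mu_p[w1]=(1-p)\,\mu_p[w]$ for full $w$, and one must observe that the (possibly incomplete) last block is negligible for asymptotic frequencies.

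Granting this, let $N_1(x,n)$ denote the number of $1$'s among the first $n$ digits of $\epsilon(x,\beta)$. The i.i.d.\ block lengths $L$ equal $1$ with probability $p$ and $m+2$ with probability $1-p$, and a block contains a $1$ if and only if $L=m+2$; the renewal theorem (equivalently the strong law of large numbers applied to the block lengths and the rewards $\mathbbm{1}_{\{L=m+2\}}$) therefore gives, for $\nu_p$--a.e.\ $x$,
$$\lim_{n\to\infty}\frac{N_1(x,n)}{n}=\frac{\E\,\mathbbm{1}_{\{L=m+2\}}}{\E L}=\frac{1-p}{p+(m+2)(1-p)}=\frac{1-p}{(m+1)(1-p)+1}.$$

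Finally, by Theorem~\ref{mp} the measure $m_p$ is $T_\beta$--ergodic and $m_p\sim\nu_p$; applying Birkhoff's ergodic theorem to $\mathbbm{1}_{[0,1/\beta)}$ and using $\mathbbm{1}_{[0,1/\beta)}(T_\beta^{k-1}x)=\mathbbm{1}_{\{\epsilon_k(x,\beta)=0\}}$, one obtains for $\nu_p$--a.e.\ $x$
$$m_p[0,1/\beta)=\lim_{n\to\infty}\frac1n\,\sharp\{1\le k\le n:\epsilon_k(x,\beta)=0\}=1-\lim_{n\to\infty}\frac{N_1(x,n)}{n}=1-\frac{1-p}{(m+1)(1-p)+1},$$
which simplifies to $\dfrac{m(1-p)+1}{(m+1)(1-p)+1}$, as claimed. (Alternatively, staying with $m_p(B)=\lim_n\frac1n\sum_{k<n}T_\beta^k\nu_p(B)$: put $q_j:=\mu_p(\{w:w_1\cdots w_j\ \text{full}\})$, so that $\mu_p(\{w:w_{k+1}=0\})=1-(1-p)q_k$; the full--word characterisation gives the recursion $q_j=pq_{j-1}+(1-p)q_{j-m-2}$ with $q_j=p^j$ for $0\le j\le m+1$, whence $\sum_j q_jx^j=(1-px-(1-p)x^{m+2})^{-1}$, so $q_j\to((m+1)(1-p)+1)^{-1}$, and Ces\`{a}ro averaging yields the same value.)
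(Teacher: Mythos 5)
Your argument is correct, but it takes a genuinely different route from the paper. You prove the lemma by combining Birkhoff's ergodic theorem for $m_p$ (legitimate, since Theorem \ref{mp} supplies invariance, ergodicity and $m_p\sim\nu_p$) with a renewal/strong-law computation of the $\nu_p$-a.e.\ digit frequency, based on the observation that for $\epsilon(1,\beta)=10^m10^\infty$ one has ``$w1$ admissible $\Leftrightarrow$ $w$ full'', so that $\mu_p$ is exactly the law of an i.i.d.\ concatenation of blocks $0$ (probability $p$) and $10^{m+1}$ (probability $1-p$); the identification of the full words and the boundedness of the forced runs, which you correctly extract from Lemma \ref{charADM} and Propositions \ref{tail-non-full}, \ref{charFULL}, \ref{fullfullfull}, make this block decomposition and the sandwich over incomplete last blocks routine, as you say. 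The paper instead never invokes an ergodic theorem here: it works directly with the defining Ces\`aro averages, setting $a_k=\mu_p\sigma_\beta^{-k}[1]$, $b_k=\mu_p\sigma_\beta^{-k}[0^{m+1}]$, derives the one-step recursion $b_{k+1}=pb_k+a_k$ and the partition identity $b_k+a_k+\cdots+a_{k+m}=1$, and solves the resulting linear system $a=(1-p)b$, $b+(m+1)a=1$ for the limits, which only uses the existence of the Ces\`aro limits guaranteed by Theorem \ref{mp}. Your parenthetical alternative (the recursion $q_j=pq_{j-1}+(1-p)q_{j-m-2}$ with generating function $(1-px-(1-p)x^{m+2})^{-1}$) is close in spirit to the paper's computation, though you invoke the renewal theorem for convergence of $q_j$ where Ces\`aro convergence would suffice. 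What each approach buys: the paper's is more elementary and self-contained; yours makes the renewal structure of $\mu_p$ explicit and delivers the a.e.\ digit frequency under $\nu_p$ directly, which is essentially the statement the paper later re-derives from this lemma via Birkhoff in the proof of Theorem \ref{levelsetdimension}, so your route in effect merges those two steps.
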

\begin{proof}
Notice that $m_p[0,\frac{1}{\beta})=1-m_p[\frac{1}{\beta},1)$ where
$$m_p[\frac{1}{\beta},1)=\lim_{n\to\infty}\frac{1}{n}\sum_{k=0}^{n-1}\nu_pT_\beta^{-k}[\frac{1}{\beta},1)=\lim_{n\to\infty}\frac{1}{n}\sum_{k=0}^{n-1}\mu_p\sigma_\beta^{-k}[1]$$
by Theorem \ref{mp}. For any $k\in\N_{\ge0}$, let
$$a_k:=\mu_p\sigma_\beta^{-k}[1]=\sum_{u_1\cdots u_k1\in\Sigma_\beta^*}\mu_p[u_1\cdots u_k1]$$
and
$$b_k:=\mu_p\sigma_\beta^{-k}[0^{m+1}]=\sum_{u_1\cdots u_k0^{m+1}\in\Sigma_\beta^*}\mu_p[u_1\cdots u_k0^{m+1}].$$
By Theorem \ref{mp}, the limits
$$a:=\lim_{n\to\infty}\frac{1}{n}\sum_{k=0}^{n-1}a_k\quad\text{and}\quad b:=\lim_{n\to\infty}\frac{1}{n}\sum_{k=0}^{n-1}b_k$$
exist.
\item(1) Prove $a=(1-p)b$. Write
\begin{eqnarray*}
b_{k+1}&=&\sum_{u_1\cdots u_k00^{m+1}\in\Sigma_\beta^*}\mu_p[u_1\cdots u_k00^{m+1}]+\sum_{u_1\cdots u_k10^{m+1}\in\Sigma_\beta^*}\mu_p[u_1\cdots u_k10^{m+1}]\\
&=&\sum_{u_1\cdots u_k0^{m+1}\in\Sigma_\beta^*}\mu_p[u_1\cdots u_k0^{m+1}0]+\sum_{u_1\cdots u_k1\in\Sigma_\beta^*}\mu_p[u_1\cdots u_k10^{m+1}].
\end{eqnarray*}
On the one hand, by Proposition \ref{tail-non-full}, $u_1\cdots u_k0^{m+1}$ is full and then $u_1\cdots u_k0^{m+1}1\in\Sigma_\beta^*$. On the other hand, by Lemma \ref{charADM}, for any $0\le s\le m$, $u_1\cdots u_k10^s10^{m-s}\notin\Sigma_\beta^*$ and then $[u_1\cdots u_k10^{m+1}]=[u_1\cdots u_k1]$. Thus, it follows from the definition of $\mu_p$ that
$$b_{k+1}=p\sum_{u_1\cdots u_k0^{m+1}\in\Sigma_\beta^*}\mu_p[u_1\cdots u_k0^{m+1}]+\sum_{u_1\cdots u_k1\in\Sigma_\beta^*}\mu_p[u_1\cdots u_k1]=pb_k+a_k.$$
Let $n\to\infty$ in
$$\frac{1}{n}\sum_{k=0}^{n-1}b_{k+1}=p\cdot\frac{1}{n}\sum_{k=0}^{n-1}b_k+\frac{1}{n}\sum_{k=0}^{n-1}a_k.$$
We get $b=pb+a$.
\item(2) Prove $b+(m+1)a=1$. It follows from
\begin{eqnarray*}
&\empty&\Big(\bigcup_{u_1\cdots u_k0^{m+1}\in\Sigma_\beta^*}[u_1\cdots u_k0^{m+1}]\Big)\cup\Big(\bigcup_{u_1\cdots u_k1\in\Sigma_\beta^*}[u_1\cdots u_k1]\Big)\\
&\cup&\Big(\bigcup_{u_1\cdots u_{k+1}1\in\Sigma_\beta^*}[u_1\cdots u_{k+1}1]\Big)\cup\cdots\cup\Big(\bigcup_{u_1\cdots u_{k+m}1\in\Sigma_\beta^*}[u_1\cdots u_{k+m}1]\Big)\\
&=&\Big(\bigcup_{u_1\cdots u_k0^{m+1}\in\Sigma_\beta^*}[u_1\cdots u_k0^{m+1}]\Big)\cup\Big(\bigcup_{u_1\cdots u_k10^m\in\Sigma_\beta^*}[u_1\cdots u_k10^m]\Big)\\
&\cup&\Big(\bigcup_{u_1\cdots u_{k+1}10^{m-1}\in\Sigma_\beta^*}[u_1\cdots u_{k+1}10^{m-1}]\Big)\cup\cdots\cup\Big(\bigcup_{u_1\cdots u_{k+m}1\in\Sigma_\beta^*}[u_1\cdots u_{k+m}1]\Big)\\
&=&\Sigma_\beta
\end{eqnarray*}
that $b_k+a_k+a_{k+1}+\cdots+a_{k+m}=1$. Let $n\to\infty$ in
$$\frac{1}{n}\sum_{k=0}^{n-1}b_k+\frac{1}{n}\sum_{k=0}^{n-1}a_k+\frac{1}{n}\sum_{k=0}^{n-1}a_{k+1}+\cdots+\frac{1}{n}\sum_{k=0}^{n-1}a_{k+m}=1.$$
We get $b+a+a+\cdots+a=1$.
\item(3) It follows from (1) and (2) that $a=\frac{1-p}{(m+1)(1-p)+1}$. Therefore
$$m_p[0,\frac{1}{\beta})=1-a=\frac{m(1-p)+1}{(m+1)(1-p)+1}.$$
\end{proof}

\section{Proofs of the examples}\label{sec:examples}

Let $\cM_\sigma(\Sigma_\beta)$ be the set of $\sigma$-invariant probability Borel measure on $(\Sigma_\beta,\cB(\Sigma_\beta))$ and $\cM_{T_\beta}([0,1))$ be the set of $T_\beta$-invariant probability Borel measure on $([0,1),\cB[0,1))$. We need the following.

\begin{definition}[$k$-step Markov measure] Let $k\in\N$ and $\mu\in\cM_\sigma(\Sigma_\beta)$. We call $\mu$ a \textit{$k$-step Markov measure} if there exists an $1\times2^k$ \textit{probability vector} $p=(p_{(i_1\cdots i_k)})_{i_1,\cdots,i_k=0,1}$ (i.e., $\sum_{i_1,\cdots,i_k=0,1}p_{(i_1\cdots i_k)}=1$ and $p_{(i_1\cdots i_k)}\ge0$ for all $i_1,\cdots,i_k\in\{0,1\}$) and a $2^k\times2^k$ \textit{stochastic matrix} $P=(P_{(i_1\cdots i_k)(j_1\cdots j_k)})_{i_1,\cdots,i_k,j_1,\cdots,j_k=0,1}$ (i.e., $\sum_{j_1,\cdots,j_k=0,1}P_{(i_1\cdots i_k)(j_1\cdots j_k)}=1$ for all $i_1,\cdots,i_k\in\{0,1\}$ and $P_{(i_1\cdots i_k)(j_1\cdots j_k)}\ge0$ for all $i_1,\cdots,i_k,$ $j_1,\cdots,j_k\in\{0,1\}$) with $pP=p$ such that
$$\mu[i_1\cdots i_k]=p_{(i_1\cdots i_k)}$$
for all $i_1,\cdots,i_k\in\{0,1\}$ and
$$\mu[i_1\cdots i_n]=p_{(i_1\cdots i_k)}P_{(i_1\cdots i_k)(i_2\cdots i_{k+1})}P_{(i_2\cdots i_{k+1})(i_3\cdots i_{k+2})}\cdots P_{(i_{n-k}\cdots i_{n-1})(i_{n-k+1}\cdots i_n)}$$
for all $i_1,\cdots,i_n\in\{0,1\}$ and $n>k$.
\end{definition}

We prove the following useful lemma for self-contained (see also \cite[Observation 6.2.7]{Ki97}).

\begin{lemma}\label{check Markov}
Let $k\ge1$ and $\mu\in\cM_\sigma(\Sigma_\beta)$. If
\begin{eqnarray}\label{Markov}
\frac{\mu[w_1\cdots w_{n+k+1}]}{\mu[w_1\cdots w_{n+k}]}=\frac{\mu[w_{n+1}\cdots w_{n+k+1}]}{\mu[w_{n+1}\cdots w_{n+k}]}
\end{eqnarray}
for all $w_1\cdots w_{n+k+1}\in\Sigma_\beta^{n+k+1}$ and $n\ge1$, then $\mu$ is a $k$-step Markov measure.
\end{lemma}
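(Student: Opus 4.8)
The plan is to build the probability vector $p$ and the stochastic matrix $P$ directly out of the one-step conditional probabilities of $\mu$ on $k$-blocks, and then to verify, in turn, the four requirements in the definition of a $k$-step Markov measure: that $p$ is a probability vector, that $P$ is stochastic, that $pP=p$, and finally that the product formula for $\mu[i_1\cdots i_n]$ holds. Only the last (and main) point will use the hypothesis \eqref{Markov}.

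Concretely, set $p_{(i_1\cdots i_k)}:=\mu[i_1\cdots i_k]$ for every $k$-block. For $k$-blocks $I=(i_1\cdots i_k)$ and $J=(j_1\cdots j_k)$, if $\mu[I]>0$ put $P_{IJ}:=\mu[i_1\cdots i_kj_k]/\mu[I]$ when $i_2\cdots i_k=j_1\cdots j_{k-1}$ (the overlap that makes $[J]$ a one-step successor of $[I]$) and $P_{IJ}:=0$ otherwise; if $\mu[I]=0$ let the $I$-th row of $P$ be any fixed probability vector, its choice being immaterial since $p_I=0$. That $p$ is a probability vector is immediate because the length-$k$ cylinders partition $\Sigma_\beta$; that each row of $P$ sums to $1$ follows, for $\mu[I]>0$, from the disjoint decomposition $[i_1\cdots i_k]=[i_1\cdots i_k0]\sqcup[i_1\cdots i_k1]$, which gives $\sum_{j_k}\mu[i_1\cdots i_kj_k]=\mu[I]$. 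For $pP=p$, fix a $k$-block $J$: the only indices $I$ with $\mu[I]>0$ that can contribute to $(pP)_J$ are $I=(0\,j_1\cdots j_{k-1})$ and $I=(1\,j_1\cdots j_{k-1})$, and each contributes $\mu[i_1j_1\cdots j_{k-1}]\cdot\mu[i_1j_1\cdots j_k]/\mu[i_1j_1\cdots j_{k-1}]=\mu[i_1j_1\cdots j_k]$, so $(pP)_J=\sum_{i_1\in\{0,1\}}\mu[i_1j_1\cdots j_k]=\mu(\sigma^{-1}[j_1\cdots j_k])=\mu[j_1\cdots j_k]=p_J$ by the $\sigma$-invariance of $\mu$.

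For the product formula one argues by induction on $n\ge k$, the base case $n=k$ being the definition of $p$. When $\mu[i_1\cdots i_n]>0$, every "window" $i_l\cdots i_{l+k}$ and $i_l\cdots i_{l+k-1}$ appearing as an index in the product is admissible and of positive measure, the latter because by $\sigma$-invariance its measure is at least $\mu[i_1\cdots i_n]$; hence each factor equals the honest conditional probability $\mu[i_l\cdots i_{l+k}]/\mu[i_l\cdots i_{l+k-1}]$, and multiplying the induction hypothesis at length $n$ by the $(n-k+1)$-st factor reduces the claim to
$$\frac{\mu[i_1\cdots i_{n+1}]}{\mu[i_1\cdots i_n]}=\frac{\mu[i_{n-k+1}\cdots i_{n+1}]}{\mu[i_{n-k+1}\cdots i_n]},$$
which is exactly \eqref{Markov} applied to the admissible word $i_1\cdots i_{n+1}$. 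When $\mu[i_1\cdots i_n]=0$ the left-hand side of the formula is $0$ and it remains to see the product vanishes: let $t$ be minimal with $\mu[i_1\cdots i_t]=0$. If $t\le k$ then $p_{(i_1\cdots i_k)}=\mu[i_1\cdots i_k]=0$ (a prefix of an inadmissible word, and a sub-cylinder of a null cylinder, is null), so the product is $0$. If $t\ge k+1$ then $\mu[i_1\cdots i_{t-1}]>0$, hence $\mu[i_{t-k}\cdots i_{t-1}]>0$ by $\sigma$-invariance, so the $(t-k)$-th factor is $\mu[i_{t-k}\cdots i_t]/\mu[i_{t-k}\cdots i_{t-1}]$ and it suffices to show $\mu[i_{t-k}\cdots i_t]=0$: for $t=k+1$ this is just $\mu[i_1\cdots i_{k+1}]=\mu[i_1\cdots i_t]=0$, while for $t\ge k+2$ we split on whether $i_1\cdots i_t$ is admissible — if it is, \eqref{Markov} gives $\mu[i_{t-k}\cdots i_t]/\mu[i_{t-k}\cdots i_{t-1}]=\mu[i_1\cdots i_t]/\mu[i_1\cdots i_{t-1}]=0$; if it is not, then $[i_1\cdots i_{t-1}]=[i_1\cdots i_{t-1}(1-i_t)]$, and applying \eqref{Markov} to the admissible word $i_1\cdots i_{t-1}(1-i_t)$ gives $\mu[i_{t-k}\cdots i_{t-1}(1-i_t)]=\mu[i_{t-k}\cdots i_{t-1}]$, whence $\mu[i_{t-k}\cdots i_t]=0$. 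In every case a factor of the product is $0$, so the product equals $0=\mu[i_1\cdots i_n]$, completing the induction.

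The verifications that $p$ is a probability vector, $P$ is stochastic, and $pP=p$ are routine. The main work, and the only genuinely delicate point, is the bookkeeping in the induction for words of measure zero — and among those, the inadmissible ones — where one has to combine $\sigma$-invariance with the hypothesis \eqref{Markov} itself to identify which factor of the product vanishes; everything else is the standard "conditional probabilities assemble into a Markov chain" computation.
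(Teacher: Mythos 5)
Your proof is correct and takes essentially the same route as the paper: you define $p_{(i_1\cdots i_k)}=\mu[i_1\cdots i_k]$ and the transition matrix by the same one-step conditional ratios, and the only substantive use of \eqref{Markov} is to telescope $\mu[i_1\cdots i_n]$ into the product of these ratios. The only difference is that you write out the routine verifications ($pP=p$, row sums) and treat the null or inadmissible words carefully (locating a vanishing factor via minimality and $\sigma$-invariance), details the paper's telescoping identity glosses over; this is a refinement of the same argument, not a different one.
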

\begin{proof} For any $i_1,\cdots,i_k\in\{0,1\}$, let $p_{(i_1\cdots i_k)}:=\mu[i_1\cdots i_k]$. Then $p=(p_{(i_1\cdots i_k)})_{i_1,\cdots,i_k=0,1}$ is a $1\times2^k$ probability vector. We define a $2^k\times2^k$ stochastic matrix
$$P=(P_{(i_1\cdots i_k)(j_2\cdots j_{k+1})})_{i_1,\cdots,i_k,j_2,\cdots,j_{k+1}=0,1}$$
as follows.
\newline i) If there exists integer $t$ with $2\le t\le k$ such that $i_t\neq j_t$, let
$$P_{(i_1i_2\cdots i_k)(j_2\cdots j_kj_{k+1})}:=0;$$
\newline ii) If $\mu[i_1\cdots i_k]\neq0$, let
$$P_{(i_1\cdots i_k)(i_2\cdots i_{k+1})}:=\frac{\mu[i_1\cdots i_{k+1}]}{\mu[i_1\cdots i_k]}\quad \text{for } i_{k+1}=0,1;$$
\newline iii) If $\mu[i_1\cdots i_k]=0$, let
$$P_{(i_1\cdots i_k)(i_2\cdots i_k0)}:=1\quad\text{and}\quad P_{(i_1\cdots i_k)(i_2\cdots i_k1)}:=0.$$
Then $\sum_{j_2,\cdots,j_{k+1}=0,1}P_{(i_1\cdots i_{k})(j_2\cdots j_{k+1})}=1$ for all $i_1,\cdots,i_{k}\in\{0,1\}$ and $pP=p$. Since for all $s\ge1$ and $i_1,\cdots,i_{s+k}\in\{0,1\}$ we have
$$\begin{aligned}
\mu[i_1\cdots i_{k+s}]&=\mu[i_1\cdots i_k]\frac{\mu[i_1\cdots i_{k+1}]}{\mu[i_1\cdots i_k]}\frac{\mu[i_1i_2\cdots i_{k+2}]}{\mu[i_1i_2\cdots i_{k+1}]}\cdots\frac{\mu[i_1\cdots i_s\cdots i_{s+k}]}{\mu[i_1\cdots i_s\cdots i_{s+k-1}]}\\
&\xlongequal[(\ref{Markov})]{by}\mu[i_1\cdots i_k]\frac{\mu[i_1\cdots i_{k+1}]}{\mu[i_1\cdots i_k]}\frac{\mu[i_2\cdots i_{k+2}]}{\mu[i_2\cdots i_{k+1}]}\cdots\frac{\mu[i_s\cdots i_{s+k}]}{\mu[i_s\cdots i_{s+k-1}]}\\
&=p_{(i_1\cdots i_k)}P_{(i_1\cdots i_k)(i_2\cdots i_{k+1})}P_{(i_2\cdots i_{k+1})(i_3\cdots i_{k+2})}\cdots P_{(i_s\cdots i_{s+k-1})(i_{s+1}\cdots i_{s+k})},
\end{aligned}$$
by definition we know that $\mu$ is a $k$-step Markov measure.
\end{proof}

\begin{proof}[Proof of Example \ref{positive}]
Let $p\in(0,1)$ and $\lambda:=\lim\limits_{n\to\infty}\frac{1}{n}\sum_{k=0}^{n-1}\mu_p\circ\sigma^{-k}$. Then $\lambda$ is $\sigma$-invariant and $m_p=\lambda\circ\pi_\beta^{-1}$. It suffices to prove $$h_\lambda(\sigma)=\sup\Big\{h_\mu(\sigma):\mu\in\cM_\sigma(\Sigma_\beta)\text{ and }\mu[0]=\lambda[0]\Big\}.$$
Let $a:=\lambda[0]$. By \cite[Theorem 1.2 and Proposition 4.2]{Li19}, it suffices to prove that $\lambda$ is a unique $(m+1)$-step Markov measure (see \cite{Ki97,Li19} for definition) in $\cM_\sigma(\Sigma_\beta)$ taking value $a$ on $[0]$.
\newline(1) Prove the uniqueness. Noting that
\begin{eqnarray}\label{not admissible}
10^k1\notin\Sigma_\beta^*\text{ for all }0\le k\le m,
\end{eqnarray}
we get $\sigma^{-i}[1]=[0^i1]$ for all $0\le i\le m+1$. Let $\mu\in\cM_\sigma(\Sigma_\beta)$ with $\mu[0]=a$. Then we have
$$\mu[0^i1]=\mu[1]=1-a\quad\text{for all }0\le i\le m+1.$$
For $i,j\in\{0,1,\cdots,m+1\}$, by (\ref{not admissible}) we get $[0^i10^j]=[0^i1]$. Thus
$$\mu[0^i10^j]=\mu[0^i1]=1-a\quad\text{for all }0\le i,j\le m+1.$$
For $k\in\{1,\cdots,m+2\}$, also by (\ref{not admissible}) we get $\Sigma_\beta=[0^k]\cup\bigcup_{i=0}^{k-1}[0^i10^{k-i-1}]$, which implies
$$\mu[0^k]=1-\sum_{i=0}^{k-1}\mu[0^i10^{k-i-1}]=1-k(1-a)=ka-k+1.$$
The above calculation means that all the measures in $\cM_\sigma(\Sigma_\beta)$ taking value $a$ on $[0]$ are the same on all the cylinders with order no larger than $m+2$. Since $(m+1)$-step Markov measures only depend on their values on the cylinders with order no larger than $m+2$, the uniqueness of $\lambda$ follows.
\newline(2) Prove that $\lambda$ is an $(m+1)$-step Markov measure. Let $k:=m+1$. By Lemma \ref{check Markov}, it suffices to check (\ref{Markov}).
\newline\textcircled{\footnotesize{$1$}} For any $n\ge1$ and $w_1\cdots w_{n+k+1}\in\Sigma_\beta^{n+k+1}$, prove
$$\frac{\mu_p[w_1\cdots w_{n+k+1}]}{\mu_p[w_1\cdots w_{n+k}]}=\frac{\mu_p[w_{n+1}\cdots w_{n+k+1}]}{\mu_p[w_{n+1}\cdots w_{n+k}]}.$$
In fact, this follows from
$$\begin{aligned}
\frac{p^{N_0(w_1\cdots w_{n+k+1})}\cdot(1-p)^{N_1(w_1\cdots w_{n+k+1})}}{p^{N_0(w_1\cdots w_{n+k})}\cdot(1-p)^{N_1(w_1\cdots w_{n+k})}}&=p^{N_0(w_1\cdots w_{n+k+1})-N_0(w_1\cdots w_{n+k})}\cdot(1-p)^{N_1(w_{n+k+1})}\\
&\overset{(\star)}{=}p^{N_0(w_{n+1}\cdots w_{n+k+1})-N_0(w_{n+1}\cdots w_{n+k})}\cdot(1-p)^{N_1(w_{n+k+1})}\\
&=\frac{p^{N_0(w_{n+1}\cdots w_{n+k+1})}\cdot(1-p)^{N_1(w_{n+1}\cdots w_{n+k+1})}}{p^{N_0(w_{n+1}\cdots w_{n+k})}\cdot(1-p)^{N_1(w_{n+1}\cdots w_{n+k})}},
\end{aligned}$$
where $(\star)$ can be proved as follows. If $w_{n+k+1}=1$, then $(\star)$ is obviously true. If $w_{n+k+1}=0$, then
$$N_0(w_1\cdots w_{n+k+1})-N_0(w_1\cdots w_{n+k})=\left\{\begin{array}{ll}
1 & \mbox{if } w_1\cdots w_{n+k}1\in\Sigma_\beta^*\\
0 & \mbox{if } w_1\cdots w_{n+k}1\notin\Sigma_\beta^*
\end{array}\right.$$
and
$$N_0(w_{n+1}\cdots w_{n+k+1})-N_0(w_{n+1}\cdots w_{n+k})=\left\{\begin{array}{ll}
1 & \mbox{if } w_{n+1}\cdots w_{n+k}1\in\Sigma_\beta^*\\
0 & \mbox{if } w_{n+1}\cdots w_{n+k}1\notin\Sigma_\beta^*
\end{array}\right..$$
By $w_1\cdots w_{n+k}\in\Sigma_\beta^*$ and $\epsilon(1,\beta)=10^{k-1}10^\infty$, we know
$$w_1\cdots w_{n+k}1\in\Sigma_\beta^*\quad\Leftrightarrow\quad w_{n+1}\cdots w_{n+k}=0^k\quad\Leftrightarrow\quad w_{n+1}\cdots w_{n+k}1\in\Sigma_\beta^*.$$
Thus $N_0(w_1\cdots w_{n+k+1})-N_0(w_1\cdots w_{n+k})=N_0(w_{n+1}\cdots w_{n+k+1})-N_0(w_{n+1}\cdots w_{n+k})$.
\newline\textcircled{\footnotesize{$2$}} For any $n\ge1$ and $w_1\cdots w_{n+k+1}\in\Sigma_\beta^{n+k+1}$, prove
$$\frac{\mu_p\circ\sigma^{-1}[w_1\cdots w_{n+k+1}]}{\mu_p\circ\sigma^{-1}[w_1\cdots w_{n+k}]}=\frac{\mu_p\circ\sigma^{-1}[w_{n+1}\cdots w_{n+k+1}]}{\mu_p\circ\sigma^{-1}[w_{n+1}\cdots w_{n+k}]}=\frac{\mu_p[w_{n+1}\cdots w_{n+k+1}]}{\mu_p[w_{n+1}\cdots w_{n+k}]}.$$
By $w_1\cdots w_{n+k+1}\in\Sigma_\beta^*$ and $\epsilon(1,\beta)=10^{k-1}10^\infty$, we get
$$1w_{1}\cdots w_{n+k+1}\in\Sigma_\beta^*\quad\Leftrightarrow\quad w_{1}\cdots w_{k}=0^k\quad\Leftrightarrow\quad 1w_{1}\cdots w_{n+k}\in\Sigma_\beta^*,$$
which implies
$$\begin{aligned}\frac{\mu_p\circ\sigma^{-1}[w_{1}\cdots w_{n+k+1}]}{\mu_p\circ\sigma^{-1}[w_{1}\cdots w_{n+k}]}&=\left\{\begin{array}{ll}
\frac{\mu_p[0w_{1}\cdots w_{n+k+1}]+\mu_p[1w_{1}\cdots w_{n+k+1}]}{\mu_p[0w_{1}\cdots w_{n+k}]+\mu_p[1w_{1}\cdots w_{n+k}]} & \mbox{if } 1w_{1}\cdots w_{n+k+1}\in\Sigma_\beta^*\\
\frac{\mu_p[0w_{1}\cdots w_{n+k+1}]}{\mu_p[0w_{1}\cdots w_{n+k}]} & \mbox{if } 1w_{1}\cdots w_{n+k+1}\notin\Sigma_\beta^*
\end{array}\right.\\
&\xlongequal[]{\text{by \textcircled{\tiny{$1$}}}}\frac{\mu_p[w_{n+1}\cdots w_{n+k+1}]}{\mu_p[w_{n+1}\cdots w_{n+k}]}.
\end{aligned}$$
By $w_1\cdots w_{n+k+1}\in\Sigma_\beta^*$ and $\epsilon(1,\beta)=10^{k-1}10^\infty$, we get
$$1w_{n+1}\cdots w_{n+k+1}\in\Sigma_\beta^*\quad\Leftrightarrow\quad w_{n+1}\cdots w_{n+k}=0^k\quad\Leftrightarrow\quad 1w_{n+1}\cdots w_{n+k}\in\Sigma_\beta^*,$$
which implies
$$\begin{aligned}\frac{\mu_p\circ\sigma^{-1}[w_{n+1}\cdots w_{n+k+1}]}{\mu_p\circ\sigma^{-1}[w_{n+1}\cdots w_{n+k}]}&=\left\{\begin{array}{ll}
\frac{\mu_p[0w_{n+1}\cdots w_{n+k+1}]+\mu_p[1w_{n+1}\cdots w_{n+k+1}]}{\mu_p[0w_{n+1}\cdots w_{n+k}]+\mu_p[1w_{n+1}\cdots w_{n+k}]} & \mbox{if } 1w_{n+1}\cdots w_{n+k+1}\in\Sigma_\beta^*\\
\frac{\mu_p[0w_{n+1}\cdots w_{n+k+1}]}{\mu_p[0w_{n+1}\cdots w_{n+k}]} & \mbox{if } 1w_{n+1}\cdots w_{n+k+1}\notin\Sigma_\beta^*
\end{array}\right.\\
&\xlongequal[]{\text{by \textcircled{\tiny{$1$}}}}\frac{\mu_p[w_{n+1}\cdots w_{n+k+1}]}{\mu_p[w_{n+1}\cdots w_{n+k}]}.
\end{aligned}$$
\newline\textcircled{\footnotesize{$3$}} Repeat the above process. By induction, we can get that for any $j\ge0, n\ge1$ and $w_1\cdots w_{n+k+1}\in\Sigma_\beta^{n+k+1}$, we have
$$\frac{\mu_p\circ\sigma^{-j}[w_1\cdots w_{n+k+1}]}{\mu_p\circ\sigma^{-j}[w_1\cdots w_{n+k}]}=\frac{\mu_p\circ\sigma^{-j}[w_{n+1}\cdots w_{n+k+1}]}{\mu_p\circ\sigma^{-j}[w_{n+1}\cdots w_{n+k}]}=\frac{\mu_p[w_{n+1}\cdots w_{n+k+1}]}{\mu_p[w_{n+1}\cdots w_{n+k}]},$$
and then
$$\begin{aligned}
\frac{\lambda[w_1\cdots w_{n+k+1}]}{\lambda[w_1\cdots w_{n+k}]}&=\frac{\lim\limits_{s\to\infty}\frac{1}{s}\sum_{j=0}^{s-1}\mu_p\circ\sigma^{-j}[w_1\cdots w_{n+k+1}]}{\lim\limits_{s\to\infty}\frac{1}{s}\sum_{j=0}^{s-1}\mu_p\circ\sigma^{-j}[w_1\cdots w_{n+k}]}\\
&=\lim_{s\to\infty}\frac{\sum_{j=0}^{s-1}\mu_p\circ\sigma^{-j}[w_1\cdots w_{n+k+1}]}{\sum_{j=0}^{s-1}\mu_p\circ\sigma^{-j}[w_1\cdots w_{n+k}]}\\
&=\frac{\mu_p[w_{n+1}\cdots w_{n+k+1}]}{\mu_p[w_{n+1}\cdots w_{n+k}]}\\
&=\lim_{s\to\infty}\frac{\sum_{j=0}^{s-1}\mu_p\circ\sigma^{-j}[w_{n+1}\cdots w_{n+k+1}]}{\sum_{j=0}^{s-1}\mu_p\circ\sigma^{-j}[w_{n+1}\cdots w_{n+k}]}\\
&=\frac{\lim\limits_{s\to\infty}\frac{1}{s}\sum_{j=0}^{s-1}\mu_p\circ\sigma^{-j}[w_{n+1}\cdots w_{n+k+1}]}{\lim\limits_{s\to\infty}\frac{1}{s}\sum_{j=0}^{s-1}\mu_p\circ\sigma^{-j}[w_{n+1}\cdots w_{n+k}]}=\frac{\lambda[w_{n+1}\cdots w_{n+k+1}]}{\lambda[w_{n+1}\cdots w_{n+k}]}.
\end{aligned}$$
Therefore $\lambda$ satisfies (\ref{Markov}).
\end{proof}

\begin{proof}[Proof of Example \ref{counter}]
Let $p\in(0,1)$ and $\lambda:=\lim\limits_{n\to\infty}\frac{1}{n}\sum_{k=0}^{n-1}\mu_p\circ\sigma^{-k}$. Then $\lambda$ is $\sigma$-invariant and $m_p=\lambda\circ\pi_\beta^{-1}$. It suffices to prove $$h_\lambda(\sigma)<\sup\Big\{h_\mu(\sigma):\mu\in\cM_\sigma(\Sigma_\beta)\text{ and }\mu[0]=\lambda[0]\Big\}.$$
By the fact that $\cP:=\{[0],[1]\}$ is a partition generator of $\cB(\Sigma_\beta)$, we know $h_\lambda(\sigma)=h_\lambda(\sigma,\cP)$. Since $H_\lambda(\cP\big|\bigvee_{k=1}^n\sigma^{-k}\cP)$ decreases as $n$ increases, by \cite[Theorem 4.14]{Wal78} we get
$$h_\lambda(\sigma)\le H_\lambda\Big(\cP\Big|\bigvee_{k=1}^2\sigma^{-k}\cP\Big)$$
where
$$\begin{aligned}
H_\lambda\Big(\cP\Big|\sigma^{-1}\cP\bigvee\sigma^{-2}\cP\Big)&=H_\lambda\Big(\cP\Big|\sigma^{-1}\big(\cP\bigvee\sigma^{-1}\cP\big)\Big)\\
&=-\sum_{P\in\cP,Q\in\cP\bigvee\sigma^{-1}\cP}\lambda(P\cap\sigma^{-1}Q)\log\frac{\lambda(P\cap\sigma^{-1}Q)}{\lambda(\sigma^{-1}Q)}\\
&=-\sum_{i_1,i_2,i_3\in\{0,1\}}\lambda[i_1i_2i_3]\log\frac{\lambda[i_1i_2i_3]}{\lambda(\sigma^{-1}[i_2i_3])}\\
&=\sum_{i_1,i_2,i_3\in\{0,1\}}\lambda[i_1i_2i_3]\log\lambda[i_2i_3]-\sum_{i_1,i_2,i_3\in\{0,1\}}\lambda[i_1i_2i_3]\log\lambda[i_1i_2i_3]\\
&=\sum_{i_2,i_3\in\{0,1\}}\lambda[i_2i_3]\log\lambda[i_2i_3]-\sum_{i_1,i_2,i_3\in\{0,1\}}\lambda[i_1i_2i_3]\log\lambda[i_1i_2i_3]\\
&=\sum_{i_1,i_2\in\{0,1\}}\lambda[i_1i_2]\log\lambda[i_1i_2]-\sum_{i_1,i_2,i_3\in\{0,1\}}\lambda[i_1i_2i_3]\log\lambda[i_1i_2i_3]\\
&=\sum_{i_1,i_2,i_3\in\{0,1\}}\lambda[i_1i_2i_3]\log\lambda[i_1i_2]-\sum_{i_1,i_2,i_3\in\{0,1\}}\lambda[i_1i_2i_3]\log\lambda[i_1i_2i_3]\\
&=-\sum_{i_1,i_2,i_3\in\{0,1\}}\lambda[i_1i_2i_3]\log\frac{\lambda[i_1i_2i_3]}{\lambda[i_1i_2]},
\end{aligned}$$
where $0\log0$ is regarded as $0$. It follows from $[111]=\emptyset$ that $[110]=[11]$ and
$$\begin{aligned}
h_\lambda(\sigma)&\le-\sum_{\substack{i_1,i_2,i_3\in\{0,1\}\\i_1i_2\neq11}}\lambda[i_1i_2i_3]\log\frac{\lambda[i_1i_2i_3]}{\lambda[i_1i_2]}\\
&=-\sum_{i_1,i_3\in\{0,1\}}\lambda[i_10i_3]\log\frac{\lambda[i_10i_3]}{\lambda[i_10]}-\lambda[010]\log\frac{\lambda[010]}{\lambda[01]}-\lambda[011]\log\frac{\lambda[011]}{\lambda[01]}.
\end{aligned}$$
For $i_3\in\{0,1\}$, we have
$$\begin{aligned}
&-\lambda[00i_3]\log\frac{\lambda[00i_3]}{\lambda[00]}-\lambda[10i_3]\log\frac{\lambda[10i_3]}{\lambda[10]}\\
=&\lambda[0]\Big(\frac{\lambda[00]}{\lambda[0]}\big(-\frac{\lambda[00i_3]}{\lambda[00]}\log\frac{\lambda[00i_3]}{\lambda[00]}\big)+\frac{\lambda[10]}{\lambda[0]}\big(-\frac{\lambda[10i_3]}{\lambda[10]}\log\frac{\lambda[10i_3]}{\lambda[10]}\big)\Big)\\
\le&-\lambda[0i_3]\log\frac{\lambda[0i_3]}{\lambda[0]}
\end{aligned}$$
where the last inequality follows from Lemma \ref{inequality}. Thus
\begin{eqnarray}\label{le}
h_\lambda(\sigma)\le-\lambda[00]\log\frac{\lambda[00]}{\lambda[0]}-\lambda[01]\log\frac{\lambda[01]}{\lambda[0]}-\lambda[010]\log\frac{\lambda[010]}{\lambda[01]}-\lambda[011]\log\frac{\lambda[011]}{\lambda[01]}.
\end{eqnarray}
Since $\lambda$ is a $\sigma$-invariant probability measure, we have $\lambda[0]+\lambda[1]=1$, $\lambda[00]+\lambda[01]=\lambda[0]$, $\lambda[01]+\lambda[11]=\lambda[1]$, $\lambda[010]+\lambda[011]=\lambda[01]$ and $\lambda[011]+\lambda[111]=\lambda[11]$ where $\lambda[111]=0$. Let $a:=\lambda[0]$ and $b:=\lambda[01]$. Then by a simple calculation we get
$$\lambda[00]=a-b,\quad\lambda[010]=2b+a-1\quad\text{and}\quad\lambda[011]=1-a-b.$$
It follows from (\ref{le}) that
$$h_\lambda(\sigma)\le a\log a-(a-b)\log(a-b)-(1-a-b)\log(1-a-b)-(2b+a-1)\log(2b+a-1).$$
By Lemma \ref{mpcylinder1}, we know $a=\frac{p}{1-(1-p)^3}\ge\frac{1}{3}$. For $x\in[\frac{1-a}{2},\min\{a,1-a\}]$, we define the function
$$f_a(x):=a\log a-(a-x)\log(a-x)-(1-a-x)\log(1-a-x)-(2x+a-1)\log(2x+a-1).$$
Then $h_\lambda(\sigma)\le f_a(b)$. By calculating the derivative, it is straightforward to see that $f_a$ is strictly increasing on
$$\Big[\frac{1-a}{2},\frac{3-4a+\sqrt{-8a^2+12a-3}}{6}\Big]$$
and strictly decreasing on
$$\Big[\frac{3-4a+\sqrt{-8a^2+12a-3}}{6},\min\{a,1-a\}\Big].$$
By Lemma \ref{mpcylinder1}, it is not difficult to check $b\neq\frac{3-4a+\sqrt{-8a^2+12a-3}}{6}$. Thus $h_\lambda(\sigma)<\max f_a(x)$. By \cite[Proposition 4.2 and Remark 1.4]{Li19}, we have
$$\max f_a(x)=\sup\Big\{h_\mu(\sigma):\mu\in\cM_\sigma(\Sigma_\beta),\mu[0]=a\Big\}.$$
Therefore
$$h_\lambda(\sigma)<\sup\Big\{h_\mu(\sigma):\mu\in\cM_\sigma(\Sigma_\beta)\text{ and }\mu[0]=a\Big\}.$$
\end{proof}

The following lemma follows immediately from the convexity of the function $x\log x$.

\begin{lemma}\label{inequality}
Let $\phi:[0,\infty)\to\R$ be defined by
$$\phi(x)=\left\{\begin{array}{ll}
0 & \mbox{if } x=0;\\
-x\log x & \mbox{if } x>0.
\end{array}\right.$$
Then for all $x,y\in[0,\infty)$ and $a,b\ge0$ with $a+b=1$,
$$a\phi(x)+b\phi(y)\le\phi(ax+by).$$
The equality holds if and only if $x=y$, $a=0$ or $b=0$.
\end{lemma}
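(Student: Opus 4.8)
The plan is to identify the asserted inequality as the two-point instance of Jensen's inequality for the strictly concave function $\phi$, and then to trace the equality case carefully all the way to the boundary of the domain. First I would record that $\phi$ is continuous on $[0,\infty)$: on $(0,\infty)$ this is clear, and at the origin $\lim_{x\to 0^+}(-x\log x)=0=\phi(0)$. On $(0,\infty)$ the function is twice differentiable with $\phi'(x)=-\log x-1$ and $\phi''(x)=-1/x<0$, so $\phi$ is strictly concave on $(0,\infty)$; combined with continuity at $0$ this upgrades to strict concavity on all of $[0,\infty)$ (for $x\ne y$ in $[0,\infty)$ and $t\in(0,1)$ one has $\phi(tx+(1-t)y)>t\phi(x)+(1-t)\phi(y)$), which is the standard propagation of strict concavity from an open interval to its closure.

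Next I would dispose of the degenerate cases. If $a=0$ then $b=1$ and $a\phi(x)+b\phi(y)=\phi(y)=\phi(ax+by)$, so the inequality holds with equality; the case $b=0$ is symmetric. Hence it remains to treat $a,b>0$ with $a+b=1$. In that range the inequality $a\phi(x)+b\phi(y)\le\phi(ax+by)$ for all $x,y\in[0,\infty)$ is exactly the concavity of $\phi$ on $[0,\infty)$ established above, applied with weights $a,b$. For the equality characterisation, if $x=y$ both sides reduce to $\phi(x)$, while if $x\ne y$ the strict concavity on $[0,\infty)$ forces a strict inequality. Putting these together yields precisely the claim that equality holds if and only if $x=y$, $a=0$, or $b=0$.

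The only point that needs a moment of care is the behaviour at the endpoint $0$, since $\phi$ is only twice differentiable on the open half-line; but the continuity of $\phi$ at $0$ together with its strict concavity on $(0,\infty)$ is enough to carry both the inequality and its strict form up to the boundary, so no genuine obstacle arises. Everything else is routine calculus.
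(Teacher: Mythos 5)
Your argument is correct and follows the same route as the paper, which simply remarks that the lemma follows from the convexity of $x\log x$ (equivalently, the concavity of $\phi$) without giving details. You merely flesh out that one-line justification: the strict concavity of $\phi$ on $(0,\infty)$ via $\phi''(x)=-1/x$, its propagation to $[0,\infty)$ by continuity at $0$, and the resulting equality characterisation, all of which are sound.
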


\begin{lemma}\label{mpcylinder1}
Let $\beta\in(1,2)$ be a pseudo-golden ratio, i.e., $\epsilon(1,\beta)=1^m0^\infty$ for some $m\in\N_{\ge2}$ and $0<p<1$. Then
$$m_p[0,\frac{1}{\beta})=\frac{p}{1-(1-p)^m}\quad\text{and}\quad m_p[\frac{1}{\beta}+\cdots+\frac{1}{\beta^{m-1}},1)=\frac{p(1-p)^{m-1}}{1-(1-p)^m}.$$
\end{lemma}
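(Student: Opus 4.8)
The plan is to compute the numbers $d_j:=m_p\bigl(\pi_\beta[1^j0]\bigr)$ for $j=0,1,\dots,m-1$, where $1^j$ denotes the word consisting of $j$ ones, and then read off the two desired values as $d_0$ and $d_{m-1}$. Indeed, since $\epsilon(1,\beta)=1^m0^\infty$ is finite of length $M=m$ we have $\epsilon^*(1,\beta)=(1^{m-1}0)^\infty$, so by Lemma~\ref{charADM} a word is admissible iff it contains no block $1^m$; in particular $[1^{m-1}]=[1^{m-1}0]$, so $\pi_\beta[1^{m-1}0]=I(1^{m-1})=[\frac1\beta+\cdots+\frac1{\beta^{m-1}},1)$, while $\pi_\beta[0]=I(0)=[0,\frac1\beta)$. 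Thus $d_0=m_p[0,\frac1\beta)$ and $d_{m-1}=m_p[\frac1\beta+\cdots+\frac1{\beta^{m-1}},1)$. Because $\beta$ is simple, $m_p$ exists by Theorem~\ref{mp}; conjugating $(\Sigma_\beta,\sigma_\beta)$ with $([0,1),T_\beta)$ through $\pi_\beta$ and using $\nu_p=\pi_\beta\mu_p$ gives $\nu_p\bigl(T_\beta^{-k}\pi_\beta[w]\bigr)=\mu_p\bigl(\sigma_\beta^{-k}[w]\bigr)$, so that
$$d_j=\lim_{n\to\infty}\frac1n\sum_{k=0}^{n-1}\mu_p\bigl(\sigma_\beta^{-k}[1^j0]\bigr),$$
the Cesàro limits existing by Theorem~\ref{mp}.

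I would first record two structural facts specific to this $\beta$. By Proposition~\ref{tail-non-full}, the admissible words that fail to be full are exactly those ending in a block $1^s$ with $1\le s\le m-1$; hence \emph{every admissible word ending in $0$ is full}. Also $\Sigma_\beta=\bigsqcup_{j=0}^{m-1}[1^j0]$, since no admissible sequence begins with $1^m$, so the position of its first $0$ lies in $\{1,\dots,m\}$ and determines the unique piece containing it. Applying $\mu_p\circ\sigma_\beta^{-k}$ to this partition and averaging gives the normalisation $\sum_{j=0}^{m-1}d_j=1$. Then I would set up a recursion: splitting $\sigma_\beta^{-(k+1)}[1^j0]$ according to the symbol in coordinate $k+1$ yields the disjoint decomposition $\sigma_\beta^{-(k+1)}[1^j0]=\sigma_\beta^{-k}[01^j0]\sqcup\sigma_\beta^{-k}[1^{j+1}0]$ for $0\le j\le m-2$, the second piece being empty (as $1^m$ is forbidden) when $j=m-1$. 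Every word $u0$ with $u\in\Sigma_\beta^k$ is full, so Proposition~\ref{charFULL} gives $u01^j0\in\Sigma_\beta^*\iff u\in\Sigma_\beta^*$, and Lemma~\ref{quasibernoulli}(2) gives $\mu_p[u01^j0]=\mu_p[u0]\,\mu_p[1^j0]$; summing over $u$ yields $\mu_p(\sigma_\beta^{-k}[01^j0])=\mu_p[1^j0]\,\mu_p(\sigma_\beta^{-k}[0])$. Averaging the identity $\mu_p(\sigma_\beta^{-(k+1)}[1^j0])=\mu_p[1^j0]\,\mu_p(\sigma_\beta^{-k}[0])+\mu_p(\sigma_\beta^{-k}[1^{j+1}0])$ (last term absent when $j=m-1$) then gives $d_j=\mu_p[1^j0]\,d_0+d_{j+1}$.

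To finish, Remark~\ref{measureformula} gives $\mu_p[1^j0]=p(1-p)^j$ for $0\le j\le m-2$ and $\mu_p[1^{m-1}0]=(1-p)^{m-1}$ (the last $0$ is not counted in $N_0$ since $1^{m-1}1=1^m$ is inadmissible), so the recursion becomes $d_{j+1}=d_j-p(1-p)^jd_0$ for $0\le j\le m-2$, and by induction $d_j=(1-p)^jd_0$ for all $0\le j\le m-1$, consistently with the separate relation $d_{m-1}=(1-p)^{m-1}d_0$. Substituting into $\sum_{j=0}^{m-1}d_j=1$ gives $d_0\cdot\frac{1-(1-p)^m}{p}=1$, whence $d_0=\frac{p}{1-(1-p)^m}$ and $d_{m-1}=(1-p)^{m-1}d_0=\frac{p(1-p)^{m-1}}{1-(1-p)^m}$, which are the two claimed formulas. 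The one delicate step is the recursion, where the full-word factorisation of $\mu_p$ must be applied correctly and the boundary case $j=m-1$ treated separately; the rest is bookkeeping entirely parallel to the proof of Lemma~\ref{mpcylinder}.
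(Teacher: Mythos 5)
Your argument is correct and is essentially the paper's own method: both rest on the Cesàro-average description of $m_p$ from Theorem~\ref{mp}, the identification of $[0]$ and $[1^{m-1}]=[1^{m-1}0]$ with the two intervals, and the factorisation $\mu_p[u0\,w']=\mu_p[u0]\mu_p[w']$ for the full words $u0$ (Proposition~\ref{tail-non-full} plus Lemma~\ref{quasibernoulli}(2)). The only difference is bookkeeping: you organise the computation as a partition $\Sigma_\beta=\bigsqcup_{j=0}^{m-1}[1^j0]$ with the first-order recursion $d_j=\mu_p[1^j0]\,d_0+d_{j+1}$, whereas the paper telescopes the same relations into the single identity $1=\sum_{j=0}^{m-1}(1-p)^j\mu_p\sigma_\beta^{-(k+m-1-j)}[0]$ together with the direct relation for $[1^{m-1}]$; the resulting geometric relations $d_j=(1-p)^jd_0$ and the final formulas coincide.
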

\begin{proof}
\item(1) By Theorem \ref{mp}, we get
$$m_p[0,\frac{1}{\beta})=\lim_{n\to\infty}\frac{1}{n}\sum_{k=0}^{n-1}\nu_pT_\beta^{-k}[0,\frac{1}{\beta})=\lim_{n\to\infty}\frac{1}{n}\sum_{k=0}^{n-1}\mu_p\sigma_\beta^{-k}[0].$$
For any $k\ge0$, it follows from
$$\Sigma_\beta^*=\{w\in\bigcup_{n=1}^\infty\{0,1\}^n: 1^m\text{ does not appear in } w\}$$
that
\begin{eqnarray*}
\Sigma_\beta&=&\bigcup_{u_1\cdots u_{k+m}\in\newline\Sigma_\beta^*}[u_1\cdots u_{k+m}]\\
&=&\Big(\bigcup_{u_1\cdots u_{k+m-1}\in\Sigma_\beta^*}[u_1\cdots u_{k+m-1}0]\Big)\cup\Big(\bigcup_{u_1\cdots u_{k+m-2}\in\Sigma_\beta^*}[u_1\cdots u_{k+m-2}01]\Big)\\
&\empty&\cup\Big(\bigcup_{u_1\cdots u_{k+m-3}\in\Sigma_\beta^*}[u_1\cdots u_{k+m-3}01^2]\Big)\cup\cdots\cup\Big(\bigcup_{u_1\cdots u_{k}\in\Sigma_\beta^*}[u_1\cdots u_{k}01^{m-1}]\Big)
\end{eqnarray*}
and then
\begin{eqnarray*}
1&=&\mu_p\sigma_\beta^{-(k+m-1)}[0]+\sum_{u_1\cdots u_{k+m-2}\in\Sigma_\beta^*}\mu_p[u_1\cdots u_{k+m-2}01]\\
&\empty&+\sum_{u_1\cdots u_{k+m-3}\in\Sigma_\beta^*}\mu_p[u_1\cdots u_{k+m-3}01^2]+\cdots+\sum_{u_1\cdots u_{k}\in\Sigma_\beta^*}\mu_p[u_1\cdots u_{k}01^{m-1}]\\
&=&\mu_p\sigma_\beta^{-(k+m-1)}[0]+(1-p)\sum_{u_1\cdots u_{k+m-2}\in\Sigma_\beta^*}\mu_p[u_1\cdots u_{k+m-2}0]\\
&\empty&+(1-p)^2\sum_{u_1\cdots u_{k+m-3}\in\Sigma_\beta^*}\mu_p[u_1\cdots u_{k+m-3}0]+\cdots+(1-p)^{m-1}\sum_{u_1\cdots u_{k}\in\Sigma_\beta^*}\mu_p[u_1\cdots u_{k}]\\
&=&\mu_p\sigma_\beta^{-(k+m-1)}[0]+(1-p)\mu_p\sigma_\beta^{-(k+m-2)}[0]+\cdots+(1-p)^{m-1}\mu_p\sigma_\beta^{-k}[0]
\end{eqnarray*}
Thus
$$\frac{1}{n}\sum_{k=0}^{n-1}\mu_p\sigma_\beta^{-(k+m-1)}[0]+(1-p)\frac{1}{n}\sum_{k=0}^{n-1}\mu_p\sigma_\beta^{-(k+m-2)}[0]+\cdots+(1-p)^{m-1}\frac{1}{n}\sum_{k=0}^{n-1}\mu_p\sigma_\beta^{-k}[0]=1.$$
Taking $n\to\infty$, we get
$$m_p[0,\frac{1}{\beta})+(1-p)m_p[0,\frac{1}{\beta})+\cdots+(1-p)^{m-1}m_p[0,\frac{1}{\beta})=1.$$
Therefore $m_p[0,\frac{1}{\beta})=\frac{p}{1-(1-p)^m}$.
\item(2) By Theorem \ref{mp}, we get
$$m_p[\frac{1}{\beta}+\cdots+\frac{1}{\beta^{m-1}},1)=\lim_{n\to\infty}\frac{1}{n}\sum_{k=0}^{n-1}\nu_pT_\beta^{-k}[\frac{1}{\beta}+\cdots+\frac{1}{\beta^{m-1}},1)=\lim_{n\to\infty}\frac{1}{n}\sum_{k=0}^{n-1}\mu_p\sigma_\beta^{-k}[1^{m-1}].$$
For any $k\ge0$, it follows from
$$\sigma_\beta^{-(k+1)}[1^{m-1}]=\bigcup_{u_1\cdots u_k u_{k+1}1^{m-1}\in\Sigma_\beta^*}[u_1\cdots u_k u_{k+1}1^{m-1}]=\bigcup_{u_1\cdots u_k\in\Sigma_\beta^*}[u_1\cdots u_k 01^{m-1}]$$
that
\begin{eqnarray*}
\mu_p\sigma_\beta^{-(k+1)}[1^{m-1}]&=&\sum_{u_1\cdots u_k\in\Sigma_\beta^*}\mu_p[u_1\cdots u_k01^{m-1}]\\
&=&(1-p)^{m-1}\sum_{u_1\cdots u_k\in\Sigma_\beta^*}\mu_p[u_1\cdots u_k0]\\
&=&(1-p)^{m-1}\mu_p\sigma_\beta^{-k}[0].
\end{eqnarray*}
Thus
$$\frac{1}{n}\sum_{k=0}^{n-1}\mu_p\sigma_\beta^{-(k+1)}[1^{m-1}]=\frac{1}{n}\sum_{k=0}^{n-1}(1-p)^{m-1}\mu_p\sigma_\beta^{-k}[0].$$
Taking $n\to\infty$, we get
$$m_p[\frac{1}{\beta}+\cdots+\frac{1}{\beta^{m-1}},1)=(1-p)^{m-1}m_p[0,\frac{1}{\beta})=\frac{p(1-p)^{m-1}}{1-(1-p)^m}.$$
\end{proof}

\begin{ack}
The first and the third author thank Tony Samuel for useful discussions. The first author was supported by NSFC 11671151 and Guangdong Natural Science Foundation 2018B0303110005. The second author is grateful to the Oversea Study Program of Guangzhou Elite Project.
\end{ack}

\end{document}